\newtheorem{theorem}{Theorem}
\newtheorem{lemma}[theorem]{Lemma}
\newtheorem{remark}[theorem]{Remark}
\newtheorem{corollary}[theorem]{Corollary}
\newtheorem{proposition}[theorem]{Proposition}
\newtheorem{example}[theorem]{Example}
\newcommand{\tto}{\twoheadrightarrow}
\begin{document}
\title[Koszul duality for stratified algebras]{Koszul 
duality for stratified algebras II. Standardly stratified algebras}
\author{Volodymyr Mazorchuk}
\date{}

\maketitle

\begin{abstract}
We give a complete picture of the interaction between the Koszul 
and Ringel dualities for graded standardly stratified algebras (in the 
sense of Cline, Parshall and Scott)  admitting linear  tilting 
(co)resolutions of standard and proper costandard modules. We
single out a certain class of graded standardly stratified algebras,
imposing the condition that standard filtrations of projective
modules are finite, and  develop a tilting theory for such algebras.
Under the assumption on existence of linear tilting  (co)resolutions
we show that algebras from this class are Koszul, that both the Ringel 
and Koszul duals belong to the same class, and that these two dualities 
on this class commute.
\end{abstract}

\section{Introduction}\label{s1}

In the theory of quasi-hereditary algebras there are two classical
dualities: the Ringel duality, associated with the characteristic 
tilting module (see \cite{Ri}), and the Koszul duality, associated with
the category of linear complexes of projective modules (see
\cite{CPS2,ADL2,MO}). In \cite{MO,Ma5} it is shown that a certain class 
of Koszul quasi-hereditary algebras is stable with respect to taking 
both the Koszul and Ringel duals and that on this class of algebras
the Koszul and Ringel dualities commute. 

The approach of \cite{Ma5} is ultimately based on the possibility
to realize the derived category of our algebra as the homotopy category 
of complexes of tilting modules. This also suggested that the arguments of 
\cite{Ma5} should work in a much more general setup, whenewer
an appropriate stratification of the algebra and a sensible 
tilting theory with respect to this stratification exist. The aim of 
the present paper is to define a setup for the study of Koszulity
for stratified algebras and to extend to this setup the main result 
of \cite{Ma5}. We note that Koszul standardly stratified algebras, 
which are not quasi-hereditary, appear naturally in  
\cite{ADL3,Fr3,KKM}. 

The most general setup for stratified algebras seems to be the 
notion of standardly stratified algebras as introduced by
Cline, Parshall and Scott in \cite{CPS}. The main problem which one faces,
trying to generalize \cite{Ma5} to such stratified algebras, is that
standardly stratified algebras have infinite global dimension in general. 
In particular, this means that the Koszul dual
of such an algebra (in the case when the original algebra is Koszul) is
always infinite dimensional. Therefore any reasonable extension of
\cite{Ma5} to stratified algebras must deal with infinite dimensional
stratified algebras, for which many of the classical results are
not proved and lots of classical techniques are not developed.

In the present paper we study the class of positively graded 
standardly stratified algebras with finite dimensional 
homogeneous components satisfying the additional assumption that 
all projective modules have finite standard filtrations. For 
such algebras we develop an analogue of the classical tilting
theory and Ringel duality. This follows closely the classcal 
theory, however, at some places one has to be careful as we work
with infinite dimensional algebras, so some extension spaces
might be infinite dimensional. We use the grading to split these 
infinite dimensional spaces into an (infinite) direct sum of 
finite dimensional ones. We also give some examples which justify
our choice of algebras and show that outside the class we define
the classical approach to tilting theory fails. The Ringel duality
functor turns out to be an antiequivalence between three 
different kinds of derived categories.
 
Using the standard grading of a characteristic tilting module, 
we restrict our attention to those standardly stratified algebras,
for which all tilting coresolutions of standard modules and all
tilting resolutions of proper costandard modules are linear. 
For an algebra $A$ let $R(A)$ and $E(A)$ denote the Ringel and Koszul
duals of $A$, respectively. Generalizing the arguments of \cite{Ma5} 
we prove the following (see Section~\ref{s2} for the definitions):

\begin{theorem}\label{thm1}
Let $A$ be a positively graded standardly stratified algebra
with finite dimensional homogeneous components. Assume that 
\begin{enumerate}[(a)]
\item\label{thm1-c1} Every indecomposable projective $A$-module
has a finite standard filtration.
\item\label{thm1-c2} Every standard $A$-module
has a linear tilting coresolution.
\item\label{thm1-c3} Every costandard $A$-module
has a linear tilting resolution.
\end{enumerate}
Then the following holds:
\begin{enumerate}[(i)]
\item\label{thm1-1} The algebra $A$ is Koszul.
\item\label{thm1-2} The algebras $A$, $R(A)$, $E(A)$, 
$E(R(A))$ and $R(E(A))$ have properties \eqref{thm1-c1},
\eqref{thm1-c2} and \eqref{thm1-c3}.
\item\label{thm1-3} Every simple $A$-module is represented 
(in the derived category) by a linear complex of tilting modules.
\item\label{thm1-4} $R(E(A))\cong E(R(A))$ as graded 
standardly stratified  algebras. 
\end{enumerate}
\end{theorem}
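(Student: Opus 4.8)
The plan is to adapt the strategy of \cite{Ma5} to the present, necessarily infinite-dimensional, setting, systematically replacing each use of finite global dimension or of well-behaved injective modules by an argument that exploits the positive grading of $A$ to split the relevant, possibly infinite-dimensional, $\mathrm{Hom}$- and $\mathrm{Ext}$-spaces into finite-dimensional homogeneous components. I would begin by fixing the characteristic tilting module $T$ with its standard grading, putting $R(A)=\mathrm{End}_A(T)^{\mathrm{op}}$, and recording from the tilting theory developed earlier in the paper that $\mathrm{RHom}_A(T,-)$ realizes the required (anti)equivalence of derived categories, taking tilting modules to projectives and interchanging standard and proper costandard modules up to reversal of the order. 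In parallel I would work with the category $\mathcal{LT}$ of linear complexes of tilting modules and the category $\mathcal{LP}$ of linear complexes of projectives, together with the identification of $\mathcal{LP}$ with the category of graded modules over the Koszul dual, in the spirit of \cite{MO,ADL2}.

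For part (i), the aim is to represent every simple module by a linear complex of projective modules, which is equivalent to Koszulity. Here one combines condition (a), which replaces each indecomposable projective by a finite object filtered by standard modules, condition (b), which turns each standard module into a linear complex of tilting modules, and condition (c), together with the proper costandard filtrations of tilting modules; feeding these into the spectral-sequence machinery of \cite{MO,Ma5}, but now keeping strict control of the internal grading --- which is precisely the role of positivity of the grading and of the choice of standard grading on $T$ --- one produces the required linear projective resolutions. Part (iii) is then obtained by running the same bookkeeping in the opposite direction: a simple module is a linear complex of projectives by (i), each projective module is a linear complex of tilting modules by (a) and (b), and totalizing these yields a linear complex of tilting modules.

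For part (ii) I would establish separately that each of the two dualities preserves the class cut out by (a)--(c). The Ringel duality functor carries linear tilting complexes to linear projective complexes and back, and respects the standard grading, so properties (a)--(c) for $A$ are inherited by $R(A)$; the delicate point is that finiteness of standard filtrations persists, which one extracts from boundedness of the relevant tilting (co)resolutions. For $E(A)$ one uses that, $A$ now being Koszul, the Koszul duality functor identifies $\mathcal{LP}$ and $\mathcal{LT}$ with genuine module categories, with the standard and proper costandard $E(A)$-modules arising as the images of the (co)standard $A$-modules; then (b) and (c) for $E(A)$ follow from (b) and (c) for $A$ after exchanging the roles of standard and proper costandard objects, whereas the genuinely subtle assertion is (a) for the infinite-dimensional algebra $E(A)$ --- that its indecomposable projectives have \emph{finite} standard filtrations --- which must be deduced from the boundedness of the linear tilting coresolutions guaranteed by (b). Applying these two preservation statements in both orders covers $E(R(A))$ and $R(E(A))$ as well.

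Finally, for part (iv) I would arrange the algebras $A$, $R(A)$, $E(A)$, $E(R(A))$ and $R(E(A))$ into a square of derived (anti)equivalences --- the Koszul duality functors of $A$ and of $R(A)$ against the Ringel duality functors of $A$ and of $E(A)$ --- and verify, from the explicit action of these functors on projective and tilting modules established above, that the square commutes up to natural isomorphism and a grading shift. Since $R(E(A))$ and $E(R(A))$ are realized as the opposite endomorphism algebras of the two images of the characteristic tilting complex obtained by traversing the square the two ways, commutativity of the square produces the desired isomorphism, and it is visibly one of graded algebras compatible with the standardly stratified structures. I expect the main obstacle to be exactly part (i) together with the finiteness assertion (a) for $E(A)$ in part (ii): one has to push the classical tilting-theoretic arguments through an infinite-dimensional algebra with possibly infinite-dimensional extension spaces and no usable theory of injectives, keeping the grading tight enough to yield both Koszulity and, more subtly, finiteness of standard filtrations over the Koszul dual. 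Once this is in place, parts (iii) and (iv) follow by assembling the functorial picture as in \cite{Ma5}.
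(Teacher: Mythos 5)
Your high-level plan does match the architecture of the paper's argument: develop a graded tilting theory, prove Ringel duality as a derived antiequivalence, identify linear complexes of projectives/tiltings with module categories over the Koszul dual, and then transfer properties (a)--(c) through the resulting square of functors. You also correctly isolate the two genuinely delicate points, namely finiteness of standard filtrations over the infinite-dimensional Koszul dual, and the need to keep tight control of internal degrees throughout.

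However, your proposed order of proving \eqref{thm1-1} before \eqref{thm1-3}, and the mechanism you suggest for deducing \eqref{thm1-3}, do not quite work. You claim that each indecomposable projective is a \emph{linear} complex of tilting modules by (a) and (b), and that totalizing a linear projective resolution of $L(\lambda)$ against these gives \eqref{thm1-3}. But the standard filtration of $P(\mu)$ contains, besides $\Delta(\mu)$, subquotients $\Delta(\nu)\langle j\rangle$ with $j\neq 0$, and those contribute, via (b), tilting coresolutions whose centroids sit at $-j-k$ rather than $-k$; so the tilting coresolution of $P(\mu)$ is bounded (this is Corollary~\ref{cor301}) but is \emph{not} linear. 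More fundamentally, the hypotheses (b) and (c) control tilting (co)resolutions, not projective resolutions, and to turn linear tilting information into linear projective information one needs to already know that standard modules have linear projective resolutions --- which is part of what you are trying to prove, creating a circularity. The paper breaks this circle by proving \eqref{thm1-3} first (Proposition~\ref{nprop5}): it constructs, for each $L(\lambda)$, a tilting complex whose centroids are bounded below by $-i$ (coming from the projective resolution and Corollary~\ref{cor301}) and another bounded above by $-i$ (coming from the exact sequence $L(\lambda)\hookrightarrow\overline{\nabla}(\lambda)\tto\mathrm{Coker}$ and induction along $\preceq$), and then invokes uniqueness of the minimal tilting complex to pin the centroids to exactly $-i$. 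The key homotopy-vanishing tool making the bounds bite is the domination statement (Corollary~\ref{ncor4}), which your proposal gestures at as ``strict control of the internal grading'' but never makes precise. Koszulity \eqref{thm1-1} and linearity of projective resolutions of standards (Corollary~\ref{ncor7}) are then \emph{consequences} of \eqref{thm1-3}, not the other way round. A smaller organizational difference: the paper works with $E(R(A))$ via $\mathfrak{LT}$ (Corollary~\ref{ncor11}, where the finiteness of standard filtrations of projective $E(R(A))$-modules is extracted from the \emph{finite} linear projective resolutions of standard $R(A)$-modules) and only afterwards transfers to $E(A)$ through the isomorphism $E(A)\cong R(E(R(A)))$ of Corollary~\ref{ncor14}, rather than attacking $E(A)$ directly as you propose; your direct route may be salvageable but is not the path taken here.
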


Theorem~\ref{thm1} extends and generalizes results from 
\cite{ADL2,ADL3,MO,Ma5}.

The paper is organized as follows: in Section~\ref{s2} we collected
all necessary definitions and preliminaries. In Sections~\ref{s3}
and \ref{s31} we develop the tilting theory for graded standardly 
stratified algebras. This theory is used in Section~\ref{s4} to 
prove Theorem~\ref{thm1}. We complete the paper with several examples 
in Section~\ref{s5}.
\vspace{0.5cm}

\noindent
{\bf Acknowledgments.} The research was partially supported by the
Swedish Research Council. A part of the results in the paper were
obtained during the visit of the author to Department of Algebra and
Number Theory, E{\"o}tv{\"o}s University, Budapest in September 2008.
The hospitality of E{\"o}tv{\"o}s University is gratefully acknowledged.
The author also thanks Istv{\'a}n {\'A}goston and Erzs{\'e}bet Luk{\'a}cs
for their hospitality and many stimulating discussions. I am grateful
to the referee for the very careful reading of the paper, for pointing 
out several inaccuracies and a gap in the original version and for 
numerous suggestions which led to improvements in the original version
of the paper.

\section{Graded standardly stratified algebras}\label{s2}

By $\mathbb{N}$ we denote the set of all positive integers. 
By a grading we always  mean a {\em $\mathbb{Z}$-grading} and
by a module we always mean a {\em graded left} module. 

Let $\Bbbk$ be an algebraically closed field and 
$A=\bigoplus_{i\geq 0}A_i$ be a graded 
$\Bbbk$-algebra. We assume that $A$ is {\em locally finite},
that is $\dim_{\Bbbk}A_i<\infty$. Set 
$\mathrm{r}(A):=\bigoplus_{i>0}A_i$. We further assume that
$A_0\cong \bigoplus_{\lambda\in\Lambda}\Bbbk e_{\lambda}$ for some 
set $\{e_{\lambda}:\lambda\in\Lambda\}$ of pairwise orthogonal 
nonzero idempotents in $A_0$, where $\Lambda$ is a nonempty finite set
(using the classical Morita theory one extends all our results
to the case when $A_0$ is a semi-simple algebra). 
Under these assumptions the algebra $A$ is positively graded
in the sense of \cite{MOS}. In what follows we call $A$
{\em positively graded} if it satisfies all assumptions of this
paragraph. A typical example of a positively graded algebra is 
$\Bbbk[x]$, where $1$ has degree zero and $x$ has degree one.

Let $A\text{-}\mathrm{gmod}$ denote the category of all locally finite dimensional 
graded $A$-modules. Morphisms in this category are homogeneous morphism
of degree zero between graded $A$-modules. Consider the full subcategories
$A^{\uparrow}\text{-}\mathrm{gmod}$ and $A^{\downarrow}\text{-}\mathrm{gmod}$ 
of $A\text{-}\mathrm{gmod}$, which consist of all graded modules 
$M=\bigoplus_{i\in \mathbb{Z}}M_i$ for which there exists
$n\in\mathbb{Z}$ such that $M_i=0$ for all $i>n$ or $i<n$, respectively.
All these categories are abelian,  the category 
$A^{\downarrow}\text{-}\mathrm{gmod}$ has enough projectives and the
category $A^{\uparrow}\text{-}\mathrm{gmod}$ has enough injectives.
For $M\in A^{\downarrow}\text{-}\mathrm{gmod}$ we set
\begin{displaymath}
\mathfrak{b}(M)=
\begin{cases}
+\infty, & M=0; \\
\min_{n\in\mathbb{Z}}\{M_n\neq 0\}, & M\neq 0.
\end{cases} 
\end{displaymath}

For $i\in\mathbb{Z}$ we denote by $\langle i\rangle$ the autoequivalence 
of $A\text{-}\mathrm{gmod}$, which shifts the grading as follows: 
$(M\langle i\rangle)_j=M_{i+j}$, where $j\in \mathbb{Z}$. This autoequivalence
preserves both $A^{\uparrow}\text{-}\mathrm{gmod}$ and
$A^{\downarrow}\text{-}\mathrm{gmod}$. Denote by $\circledast$ the usual 
{\em graded duality} on $A\text{-}\mathrm{gmod}$
(it swaps $A^{\uparrow}\text{-}\mathrm{gmod}$ and
$A^{\downarrow}\text{-}\mathrm{gmod}$). We adopt the  notation 
$\mathrm{hom}_A$ and $\mathrm{ext}^i_A$ to denote homomorphisms 
and extensions in  $A\text{-}\mathrm{gmod}$. Unless stated otherwise, all
morphisms are considered in the category $A\text{-}\mathrm{gmod}$.

For $\lambda\in \Lambda$  we consider the graded indecomposable 
projective module $P(\lambda)=Ae_{\lambda}$, its graded simple 
quotient $L(\lambda)=P(\lambda)/\mathrm{r}(A)P(\lambda)$ and the 
graded indecomposable injective envelop $I(\lambda)$ of $L(\lambda)$.
Note that we always have the following:
$P(\lambda)\in A^{\downarrow}\text{-}\mathrm{gmod}$,
$I(\lambda)\in A^{\uparrow}\text{-}\mathrm{gmod}$
and $L(\lambda)\in A^{\downarrow}\text{-}\mathrm{gmod}
\cap A^{\uparrow}\text{-}\mathrm{gmod}$.

Let $\preceq$ be a partial preorder on $\Lambda$. For $\lambda,\mu\in\Lambda$
we write $\lambda\prec\mu$ provided that $\lambda\preceq\mu$ and
$\mu\not\preceq\lambda$. We also write $\lambda\sim\mu$ provided that 
$\lambda\preceq\mu$ and $\mu\preceq\lambda$. Then $\sim$ is an equivalence
relation. Let $\overline{\Lambda}$ denote the set of equivalence classes 
of $\sim$. Then the preorder $\preceq$ induces a partial order on
$\overline{\Lambda}$, which we will denote by the same symbol, abusing 
notation. For $\lambda\in\Lambda$ we denote by $\overline{\lambda}$ the
equivalence class from $\overline{\Lambda}$, containing $\lambda$.
We also denote by $\preceq^{\mathrm{op}}$ the partial preorder on 
$\Lambda$, opposite to $\preceq$.

For $\lambda\in\Lambda$ we define the {\em standard module}
$\Delta(\lambda)$ as the quotient of $P(\lambda)$ modulo the submodule,
generated by the images of all possible  morphisms 
$P(\mu)\langle i\rangle\to P(\lambda)$, where $\lambda\prec\mu$
and $i\in\mathbb{Z}$. We also define the 
{\em proper standard module} $\overline{\Delta}(\lambda)$ as the 
quotient of $P(\lambda)$ modulo the submodule, generated by the images 
of all possible morphisms  $P(\mu)\langle i\rangle\to P(\lambda)$, 
where $\lambda\preceq\mu$ and $i\in\mathbb{Z}$ satisfies $i<0$. By 
definition, the modules $\Delta(\lambda)$ and $\overline{\Delta}(\lambda)$
belong to $A^{\downarrow}\text{-}\mathrm{gmod}$. Dually we define
the {\em costandard module} $\nabla(\lambda)$ and the 
{\em proper costandard module} $\overline{\nabla}(\lambda)$
(which always belong to $A^{\uparrow}\text{-}\mathrm{gmod}$).

The algebra $A$ will be called {\em standardly stratified} 
(with respect to the preorder $\preceq$ on $\Lambda$) provided
that for every $\lambda\in\Lambda$ the kernel $K(\lambda)$ of the 
canonical  projection $P(\lambda)\tto \Delta(\lambda)$ has a {\em finite}
filtration, whose subquotients are isomorphic (up to shift) to
standard modules. This is a natural generalization of the original
definition from \cite{CPS} to our setup. For example, the algebra
$A$ is always standardly stratified (with projective standard modules)
in the case $|\Lambda|=1$ and, more generally, in the case
when the relation $\preceq$ is the full relation. 

\section{Tilting theory for graded standardly stratified algebras}\label{s3}

Tilting theory for (finite dimensional) quasi-hereditary algebras was 
developed in \cite{Ri}. It was extended in \cite{AHLU} to 
(finite dimensional) strongly standardly stratified algebras and in 
\cite{Fr} to all (finite dimensional) standardly stratified algebras. 
For infinite dimensional algebras some versions of tilting theory
appear in \cite{CT,DM,MT}. This section is a further generalization
of all these results, especially of those from \cite{Fr}, to the case
infinite dimensional positively graded algebras. In this section $A$ is a 
positively graded standardly stratified algebra.

Let $\mathcal{C}(\Delta)$ denote the full subcategory of
the category $A^{\downarrow}\text{-}\mathrm{gmod}$, which consists
of all modules $M$ admitting a (possibly infinite) filtration 
\begin{equation}\label{eqfiltration}
M=M^{(0)}\supseteq M^{(1)}\supseteq M^{(2)}\supseteq\dots,
\end{equation}
such that for every $i=0,1,\dots$ the subquotient 
$M^{(i)}/M^{(i+1)}$ is isomorphic (up to shift) to some standard module
and $\displaystyle\lim_{i\rightarrow+\infty}\mathfrak{b}(M^{(i)})=+\infty$.
Note that for $M\in A^{\downarrow}\text{-}\mathrm{gmod}$ with such a
filtration we 
automatically get $\displaystyle \bigcap_{i\geq 0}M^{(i)}=0$.
Denote by $\mathcal{F}^{\downarrow}(\Delta)$ the full subcategory of
$A^{\downarrow}\text{-}\mathrm{gmod}$, which consists
of all modules $M$ admitting a finite filtration with subquotients
from $\mathcal{C}(\Delta)$. The category $\mathcal{F}^{\downarrow}(\Delta)$
is obviously closed with respect to finite extensions.
Similarly we define $\mathcal{F}^{\downarrow}(\overline{\nabla})$.
Let $\mathcal{F}^{b}(\Delta)$ and $\mathcal{F}^{b}(\overline{\nabla})$
be the corresponding full subcategories of modules with finite 
filtrations of the form \eqref{eqfiltration}. We start with the 
following result, which  generalizes the corresponding results 
from \cite{AB,AR,Ri,Fr}.

\begin{theorem}\label{thm2}
Let $A$ be a positively graded standardly stratified algebra.
\begin{enumerate}[(i)]
\item \label{thm2-1} 
We have
{\small
\begin{displaymath}
\begin{array}{rcl} 
\mathcal{F}^{\downarrow}(\Delta)&=&
\{M\in A^{\downarrow}\text{-}\mathrm{gmod}\,:\,
\mathrm{ext}^i_A(M,\overline{\nabla}(\lambda)\langle j\rangle)=0,
\forall j\in\mathbb{Z},i>0,\lambda\in\Lambda\}\\
&=&
\{M\in A^{\downarrow}\text{-}\mathrm{gmod}\,:\,
\mathrm{ext}^1_A(M,\overline{\nabla}(\lambda)\langle j\rangle)=0,
\forall j\in\mathbb{Z},\lambda\in\Lambda\}.
\end{array}
\end{displaymath}
}
\item \label{thm2-2}
We have
{\small
\begin{displaymath}
\begin{array}{rcl} 
\mathcal{F}^{\downarrow}(\overline{\nabla})&=&
\{M\in A^{\downarrow}\text{-}\mathrm{gmod}\,:\,
\mathrm{ext}^i_A(\Delta(\lambda)\langle j\rangle,M)=0,
\forall j\in\mathbb{Z},i>0,\lambda\in\Lambda\}\\
&=&
\{M\in A^{\downarrow}\text{-}\mathrm{gmod}\,:\,
\mathrm{ext}^1_A(\Delta(\lambda)\langle j\rangle,M)=0,
\forall j\in\mathbb{Z},\lambda\in\Lambda\}.
\end{array}
\end{displaymath}
}
\end{enumerate}
\end{theorem}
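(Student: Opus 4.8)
The plan is to prove the two ``homological'' characterizations of $\mathcal{F}^{\downarrow}(\Delta)$ and $\mathcal{F}^{\downarrow}(\overline{\nabla})$ by the standard Ringel-type bootstrap, adapted to the infinite-dimensional graded setting. I treat part \eqref{thm2-1}; part \eqref{thm2-2} is dual (using $\circledast$ to pass between standard and proper costandard data, or simply running the mirror argument). Write $\mathcal{X}$ for the middle class (all higher $\mathrm{ext}$ vanish against $\overline{\nabla}(\lambda)\langle j\rangle$) and $\mathcal{Y}$ for the right-hand class (only $\mathrm{ext}^1$ vanishes). The inclusions $\mathcal{F}^{\downarrow}(\Delta)\subseteq\mathcal{X}\subseteq\mathcal{Y}$ are the easy directions: for the first one needs $\mathrm{ext}^i_A(\Delta(\lambda),\overline{\nabla}(\mu)\langle j\rangle)=0$ for $i>0$ (the basic orthogonality of standard vs.\ proper costandard modules in a standardly stratified algebra, which follows from the definitions and a dimension-shift argument using that $K(\lambda)$ is $\Delta$-filtered), then one propagates along the \emph{finite} filtration defining $\mathcal{F}^{\downarrow}(\Delta)$ using the long exact sequence; the subtlety is that each step of that filtration lies in $\mathcal{C}(\Delta)$, i.e.\ has a possibly infinite $\Delta$-filtration, so one must check that $\mathrm{ext}^i_A(M,\overline{\nabla}(\lambda)\langle j\rangle)$ commutes with the inverse limit coming from \eqref{eqfiltration}. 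Here the positive grading is essential: fixing $j$, the condition $\mathfrak{b}(M^{(i)})\to+\infty$ together with local finiteness forces the tower $M^{(i)}$ to stabilize in each bounded range of degrees, and since $\overline{\nabla}(\lambda)$ is bounded below, only finitely many filtration steps contribute to $\mathrm{ext}^i_A(M,\overline{\nabla}(\lambda)\langle j\rangle)$ in each internal degree; so the vanishing passes to the (infinite) filtration. $\mathcal{X}\subseteq\mathcal{Y}$ is trivial.

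The real content is $\mathcal{Y}\subseteq\mathcal{F}^{\downarrow}(\Delta)$. Given $M\in A^{\downarrow}\text{-}\mathrm{gmod}$ with $\mathrm{ext}^1_A(M,\overline{\nabla}(\lambda)\langle j\rangle)=0$ for all $\lambda,j$, I want to peel off standard modules from the top. Let $\mu$ be maximal (with respect to $\preceq$) among the $\lambda$ with $[M/\mathrm{r}(A)M : L(\lambda)\langle j\rangle]\neq 0$ for some $j$; pick such a $j$ minimal. By maximality of $\mu$, the projective cover map $P(\mu)\langle j\rangle\to M$ factors through $\Delta(\mu)\langle j\rangle$ (every generator of the kernel $K(\mu)$ maps into the radical, and the $\Delta$-filtration of $K(\mu)$ involves only $\Delta(\nu)$ with $\nu\succ\mu$, which cannot hit the top of $M$), giving a map $\varphi:\Delta(\mu)\langle j\rangle\to M$. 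One shows $\varphi$ is injective and that its cokernel again lies in $\mathcal{Y}$: injectivity follows because $\overline{\nabla}(\mu)$ contains $L(\mu)$ in its socle and a nonzero kernel would produce, via $\mathrm{ext}^1_A(-,\overline{\nabla}(\mu)\langle j\rangle)$ applied to $0\to\ker\varphi\to\Delta(\mu)\langle j\rangle\to\mathrm{im}\,\varphi\to 0$, a contradiction with the hypothesis on $M$ (using also $\mathrm{ext}^1_A(\Delta(\mu),\overline{\nabla}(\mu)\langle\cdot\rangle)=0$ and that $\mathrm{hom}_A(\Delta(\mu)\langle j\rangle,\overline{\nabla}(\mu)\langle j\rangle)$ is one-dimensional); that $M/\mathrm{im}\,\varphi\in\mathcal{Y}$ follows from the long exact sequence together with the orthogonality $\mathrm{ext}^2_A(\Delta(\mu)\langle j\rangle,\overline{\nabla}(\lambda)\langle\cdot\rangle)=0$. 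Iterating produces a filtration of $M$ with standard subquotients.

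The main obstacle — and the place where this genuinely goes beyond the finite-dimensional arguments of \cite{Ri,Fr} — is that this iteration need not terminate after finitely many steps, so one does not a priori land in $\mathcal{F}^{\downarrow}(\Delta)$ but only in something filtered by standards. I expect the fix to run as follows. First, $M$ has finitely many ``layers'' in the following sense: the set of $\overline{\lambda}\in\overline{\Lambda}$ occurring is finite (as $\Lambda$ is finite), so after finitely many stages of the peeling one can assume all remaining standard subquotients have the same (minimal remaining) class $\overline{\mu}$; within such a block the relevant proper standard/proper costandard modules behave as over a ``smaller'' algebra, and one must show the remaining tail $N$ satisfies $\mathfrak{b}$ of its successive kernels tending to $+\infty$. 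This is where positivity of the grading does the work again: each application of $\varphi$ at degree $j$ strictly increases the minimal degree available at the next stage within a fixed block (because $\Delta(\mu)\langle j\rangle$ surjects onto the degree-$j$ part of the top), so for each internal degree only finitely many steps act, the partial filtration of $N$ has $\mathfrak{b}(N^{(i)})\to+\infty$, hence $N\in\mathcal{C}(\Delta)$ and $M\in\mathcal{F}^{\downarrow}(\Delta)$. Assembling: finitely many blocks, each contributing a $\mathcal{C}(\Delta)$-piece, gives the finite filtration required by the definition of $\mathcal{F}^{\downarrow}(\Delta)$. The one technical lemma I would isolate and prove first is the $\mathrm{ext}$--inverse-limit interchange for towers with $\mathfrak{b}\to+\infty$ against modules bounded below, since it is used in both inclusions.
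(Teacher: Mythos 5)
The easy inclusions $\mathcal{F}^{\downarrow}(\Delta)\subseteq\mathcal{X}\subseteq\mathcal{Y}$ are handled correctly: your observation that one must commute $\mathrm{ext}$ with the possibly infinite tower and that the condition $\mathfrak{b}(M^{(i)})\to+\infty$ together with the finite-dimensionality of $\overline{\nabla}(\lambda)$ (Lemma~\ref{lem4}) makes only finitely many filtration steps relevant in each internal degree is exactly the content of Corollary~\ref{corlem5} in the paper.

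The hard inclusion $\mathcal{Y}\subseteq\mathcal{F}^{\downarrow}(\Delta)$, however, has a genuine gap at the very first step. You take $\mu$ maximal among the $\lambda$ with $L(\lambda)\langle j\rangle$ in the top of $M$ and claim that the projective cover map $P(\mu)\langle j\rangle\to M$ factors through $\Delta(\mu)\langle j\rangle$, on the grounds that the $\Delta$-filtration of $K(\mu)$ involves only $\Delta(\nu)$ with $\nu\succ\mu$, which cannot hit the top of $M$. But that reasoning only shows that the image of $K(\mu)\langle j\rangle$ lands in $\mathrm{r}(A)M$, not that it is zero; and the factorization requires zero. A minimal counterexample: take $A$ quasi-hereditary with $\Lambda=\{1,2\}$, $1\prec 2$, $\Delta(1)=L(1)$, $\Delta(2)=P(2)$, and $P(1)$ having the $\Delta$-filtration $0\to\Delta(2)\to P(1)\to\Delta(1)\to 0$. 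Set $M=P(1)\in\mathcal{Y}$. Its top is $L(1)$, so your $\mu=1$, but the identity $P(1)\to M$ does not factor through $\Delta(1)=L(1)$. The conclusion of the theorem is of course true for $M=P(1)$, but your peeling procedure never finds the correct first step $\Delta(2)\hookrightarrow P(1)$, because $L(2)$ is not in the top. Peeling single standards off the top cannot work: the standard subquotient you need to extract first is typically $\Delta(\nu)$ for $\nu$ maximal in $\Lambda$, and $L(\nu)$ lives in the radical of $M$, not its top.

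The paper's Lemma~\ref{lem7} takes a structurally different route. One fixes $\nu$ maximal in $\Lambda$ (not in the top of $M$) and lets $N$ be the \emph{trace} in $M$ of the projectives $\Delta(\lambda)=P(\lambda)$, $\lambda\in\overline{\nu}$ (all shifts). Then $e_{\overline{\nu}}(M/N)=0$, so $\mathrm{Coker}=M/N$ is a $B_{\overline{\nu}}$-module, $\mathrm{ext}^1_A(\mathrm{Coker},\overline{\nabla}(\mu)\langle j\rangle)=0$ is verified for $\mu\notin\overline{\nu}$, and the inductive hypothesis on $|\overline{\Lambda}|$ (via Lemma~\ref{lem3}) gives $\mathrm{Coker}\in\mathcal{F}^{\downarrow}(\Delta)$. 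The submodule $N$ is then shown to be projective by checking $\mathrm{ext}^1_A(N,L(\mu)\langle j\rangle)=0$ for all $\mu,j$, using the sequence $L(\mu)\langle j\rangle\hookrightarrow\overline{\nabla}(\mu)\langle j\rangle\twoheadrightarrow C$ and the vanishing of $\mathrm{hom}_A(N,C)$. Thus $N$ is projective (hence in $\mathcal{F}^{\downarrow}(\Delta)$) and $M$ is an extension of two objects of $\mathcal{F}^{\downarrow}(\Delta)$. Induction is on $|\overline{\Lambda}|$, not on a presumed filtration of $M$, which sidesteps the termination and $\mathfrak{b}\to+\infty$ concerns you were fighting in the last paragraph. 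If you want to salvage a peel-from-the-top argument you would need a different criterion for extracting the first standard submodule; the trace-of-maximal-projectives device is the standard and, in this infinite-dimensional setting, apparently the cleanest replacement.
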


To prove Theorem~\ref{thm2} we will need several auxiliary lemmata.
We will often use the usual induction for stratified algebras.
To define this let $\lambda\in \Lambda$ be maximal with respect to 
$\preceq$. Set
$e_{\overline{\lambda}}=\sum_{\mu\in\overline{\lambda}}e_{\mu}$,
$I_{\overline{\lambda}}=Ae_{\overline{\lambda}}A$ 
and $B_{\overline{\lambda}}=A/I_{\overline{\lambda}}$. The algebra
$B_{\overline{\lambda}}$ inherits from $A$ a positive grading and 
hence is a positively graded locally finite algebra.
Further, just like in the case of usual stratified algebras, the 
algebra $B_{\overline{\lambda}}$ is stratified with respect to the
restriction of the preorder $\preceq$ to 
$\Lambda\setminus\{\overline{\lambda}\}$.
Any module $M$ over $B_{\overline{\lambda}}$ can be considered as an
$A$-module in the usual way. Set 
$P(\overline{\lambda})=\bigoplus_{\mu\in \overline{\lambda}}P(\mu)$.

\begin{lemma}\label{lem3}
For all $M,N\in B_{\overline{\lambda}}^{\downarrow}\text{-}\mathrm{gmod}$
and all $i\geq 0$ we have
\begin{displaymath}
\mathrm{ext}^i_{B_{\overline{\lambda}}}(M,N)=
\mathrm{ext}^i_{A}(M,N).
\end{displaymath}
\end{lemma}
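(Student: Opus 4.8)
The plan is to prove this comparison of Ext-groups by the standard dévissage argument for recollement/idempotent quotients, carefully adapted to the graded and possibly infinite-dimensional setting. The key point is that $B_{\overline{\lambda}} = A/I_{\overline{\lambda}}$ with $I_{\overline{\lambda}} = Ae_{\overline{\lambda}}A$, and because $\lambda$ is maximal, the two-sided ideal $I_{\overline{\lambda}}$ is idempotent-generated at the top of the order, so that $B_{\overline{\lambda}}$-modules are precisely the $A$-modules annihilated by $I_{\overline{\lambda}}$, i.e.\ those $A$-modules $M$ with $e_{\overline{\lambda}}M = 0$.

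First I would reduce to comparing the derived functors by exhibiting a suitable projective resolution. The cleanest route: take a projective resolution $P_\bullet \to M$ of $M$ in $B_{\overline{\lambda}}^{\downarrow}\text{-}\mathrm{gmod}$ (which exists since that category has enough projectives and, by local finiteness plus positivity of the grading, the resolution can be chosen degreewise finite and ``bounded below'' in the appropriate sense so that $\mathrm{hom}$ behaves well). The indecomposable projective $B_{\overline{\lambda}}$-modules are $\overline{P}(\mu) = P(\mu)/I_{\overline{\lambda}}P(\mu)$ for $\mu \notin \overline{\lambda}$. The crucial homological input is that each such $\overline{P}(\mu)$, viewed as an $A$-module, has the property that $\mathrm{ext}^i_A(\overline{P}(\mu), N) = 0$ for all $i > 0$ whenever $N$ is a $B_{\overline{\lambda}}$-module; equivalently, $\overline{P}(\mu)$ is ``relatively projective'' with respect to the subcategory of $I_{\overline{\lambda}}$-annihilated modules. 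This follows because $\mathrm{Hom}_A(P(\mu), -)$ is exact and $\mathrm{Hom}_A(\overline{P}(\mu), N) = \mathrm{Hom}_A(P(\mu), N)$ for $N$ annihilated by $I_{\overline{\lambda}}$; one then needs that applying $\mathrm{Hom}_A(-, N)$ to a short exact sequence of $B_{\overline{\lambda}}$-modules $0 \to K \to \overline{P}(\mu) \to X \to 0$ stays exact, which reduces to showing $\mathrm{Hom}_A(K, N) \hookleftarrow$ lifts, i.e.\ to the vanishing of the connecting map into $\mathrm{ext}^1_A$. The conceptual statement underlying all of this is that the quotient functor and its left adjoint (``inflation'' via $A \to B_{\overline{\lambda}}$ composed with $e_{\overline{\lambda}}$-localization) form part of a recollement, and relatively projective objects compute the right Ext.

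Concretely, once one knows $\mathrm{ext}^{>0}_A(\overline{P}(\mu), N) = 0$ for all $B_{\overline{\lambda}}$-modules $N$, the resolution $P_\bullet \to M$ by such modules is simultaneously a resolution that computes $\mathrm{ext}^\bullet_{B_{\overline{\lambda}}}(M, N)$ (it is by construction a projective resolution over $B_{\overline{\lambda}}$) and an $\mathrm{ext}_A$-acyclic resolution, hence computes $\mathrm{ext}^\bullet_A(M, N)$ as well. Since $\mathrm{Hom}_A(P_i, N) = \mathrm{Hom}_{B_{\overline{\lambda}}}(P_i, N)$ for each $i$ (both sides being morphisms of $I_{\overline{\lambda}}$-annihilated modules), the two cochain complexes computing the two Ext-groups literally coincide, giving the isomorphism for all $i \geq 0$. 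The case $i = 0$ is immediate and separate: $\mathrm{hom}_A(M,N) = \mathrm{hom}_{B_{\overline{\lambda}}}(M,N)$ simply because any $A$-homomorphism between $B_{\overline{\lambda}}$-modules is automatically $B_{\overline{\lambda}}$-linear.

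The main obstacle I anticipate is the infinite-dimensionality together with the fact that we are working in $A^{\downarrow}\text{-}\mathrm{gmod}$ rather than finitely generated modules: I must make sure that projective resolutions exist in the graded category with the right boundedness so that $\mathrm{Hom}$ and $\mathrm{ext}$ are computed correctly and the acyclicity argument goes through (in particular that there is no lim$^1$-type obstruction and that ``enough projectives'' in $A^{\downarrow}\text{-}\mathrm{gmod}$ really delivers a resolution by objects of the stated form). Here the positive grading is essential: every module in $A^{\downarrow}\text{-}\mathrm{gmod}$ is bounded above, a graded projective cover exists and is again bounded above with the same top degree, so the syzygies stay in $A^{\downarrow}\text{-}\mathrm{gmod}$ and the resolution is ``eventually increasing'' in the sense needed. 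I would spell out this boundedness bookkeeping carefully, and also verify that passing $B_{\overline{\lambda}}$-projectives to $A$-modules indeed lands inside $A^{\downarrow}\text{-}\mathrm{gmod}$, after which the homological comparison is formal.
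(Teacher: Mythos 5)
Your overall strategy — take a projective resolution of $M$ over $B_{\overline{\lambda}}$, show its terms are $\mathrm{ext}_A({-},N)$-acyclic for $B_{\overline{\lambda}}$-modules $N$, and identify the two $\mathrm{hom}$-complexes — is conceptually sound and is indeed the recollement picture. It runs in the opposite direction from the paper's proof, which starts with the minimal $A$-projective resolution of $M$ and splits off the subcomplex of summands from $\mathrm{add}\,P(\overline{\lambda})$ to obtain a $B_{\overline{\lambda}}$-projective resolution. Both routes hinge on the same nontrivial fact, and that is precisely where your argument has a gap.

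The gap is in the justification of the key claim $\mathrm{ext}^i_A(\overline{P}(\mu),N)=0$ for $i>0$. Your sketch (apply $\mathrm{Hom}_A({-},N)$ to a short exact sequence of $B_{\overline{\lambda}}$-modules with $\overline{P}(\mu)$ in the middle) has the wrong variance for the problem: at best it would reprove the automatic vanishing of $\mathrm{ext}^1_{B_{\overline{\lambda}}}(\overline{P}(\mu),N)$, and it says nothing about $\mathrm{ext}^1_A$, let alone $\mathrm{ext}^{\geq 2}_A$. The ending ``\dots to the vanishing of the connecting map into $\mathrm{ext}^1_A$'' is circular. What you actually need is the short exact sequence of $A$-modules $0\to I_{\overline{\lambda}}P(\mu)\to P(\mu)\to\overline{P}(\mu)\to 0$ together with the crucial input that $I_{\overline{\lambda}}P(\mu)$ is \emph{projective} as a left $A$-module, lying in $\mathrm{add}$ of shifts of $P(\nu)$ with $\nu\in\overline{\lambda}$. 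This is exactly where standard stratification and the maximality of $\lambda$ enter: the kernel of $P(\mu)\tto\Delta(\mu)$ has a finite filtration by shifted $\Delta(\nu)$ with $\mu\prec\nu$, and for $\nu\in\overline{\lambda}$ maximal one has $\Delta(\nu)=P(\nu)$, so the trace of $P(\overline{\lambda})$ in $P(\mu)$ is a (finite, split, hence projective) direct sum of shifted $P(\nu)$'s. Given this, $\mathrm{hom}_A(I_{\overline{\lambda}}P(\mu),N)=0$ because $e_{\overline{\lambda}}N=0$, and all higher $\mathrm{ext}_A$ out of a projective vanish, so the long exact sequence in $\mathrm{ext}_A({-},N)$ yields the acyclicity for all $i>0$. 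You remark that $I_{\overline{\lambda}}$ is ``idempotent-generated at the top of the order,'' but idempotence alone only controls $\mathrm{ext}^1$; without the projectivity of $I_{\overline{\lambda}}P(\mu)$ (equivalently, of the ideal $I_{\overline{\lambda}}$ as a left $A$-module) the claim fails for higher $i$, and it is precisely the standard stratification hypothesis that supplies it.
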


\begin{proof}
Let $\mathcal{P}^{\bullet}$ denote the minimal projective resolution of 
$M$ in $A^{\downarrow}\text{-}\mathrm{gmod}$. As 
$M\in B_{\overline{\lambda}}^{\downarrow}\text{-}\mathrm{gmod}$,
there exists $k\in\mathbb{Z}$ such that $M_j=0$ for all $j<k$.
Since $A$ is positively graded, we get $\mathcal{P}^{i}_j=0$
for all $j<k$ and all $i$. 

Consider the projective module 
$P=\bigoplus_{j\leq -k}P(\overline{\lambda})\langle j\rangle$. 
As $A$ is standardly stratified, for every $i$ the sum $T^i$
of images of all homomorphisms from $P$ to
$\mathcal{P}^{i}$ has the form $\bigoplus_{j\leq -k} P_{i,j}$, 
where $P_{i,j}\in\mathrm{add}P(\overline{\lambda})\langle j\rangle$. 

The differential of $\mathcal{P}^{\bullet}$ obviously maps 
$T^i$ to $T^{i-1}$, which means that the sum of all $T^i$ is a subcomplex
of $\mathcal{P}^{\bullet}$, call it $\mathcal{T}^{\bullet}$. 
Since $M\in B_{\overline{\lambda}}^{\downarrow}\text{-}\mathrm{gmod}$,
the quotient $\overline{\mathcal{P}}^{\bullet}$ of $\mathcal{P}^{\bullet}$ 
modulo $\mathcal{T}^{\bullet}$ gives a minimal projective resolution of
$M$ over $B_{\overline{\lambda}}$. 

Since $N\in B_{\overline{\lambda}}^{\downarrow}\text{-}\mathrm{gmod}$,
any homomorphism from $\mathcal{P}^{i}$ to $N$ annihilates
$\mathcal{T}^{i}$ and hence factors through 
$\overline{\mathcal{P}}^{i}$. The claim of the lemma follows.
\end{proof}

\begin{lemma}\label{lem4}
For all $\mu\in\Lambda$ we have  $\overline{\nabla}(\mu)\in 
A^{\downarrow}\text{-}\mathrm{gmod}$, in particular, 
$\overline{\nabla}(\mu)$ is finite dimensional.
\end{lemma}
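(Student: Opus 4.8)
The plan is to show that the proper costandard module $\overline{\nabla}(\mu)$ is bounded below \emph{and} bounded above in the grading, which together with local finiteness forces finite-dimensionality. Boundedness below is automatic since, dually to the situation for standard modules, one can check from the definition that $\overline{\nabla}(\mu)$ actually lies in $A^{\uparrow}\text{-}\mathrm{gmod}$; the real content is boundedness above, i.e. that $\overline{\nabla}(\mu)\in A^{\downarrow}\text{-}\mathrm{gmod}$. Here I would dualize: applying $\circledast$, the statement becomes that the proper standard module $\overline{\Delta}(\mu)$ (for the opposite algebra, with the opposite preorder) lies in $A^{\uparrow}\text{-}\mathrm{gmod}$, i.e. is bounded below, which is true by construction, but also that it is bounded above. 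So it suffices to prove directly that $\overline{\Delta}(\mu)$ is finite dimensional.

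To prove $\overline{\Delta}(\mu)$ is finite dimensional, recall its definition: it is $P(\mu)$ modulo the submodule generated by the images of all maps $P(\nu)\langle i\rangle\to P(\mu)$ with $\mu\preceq\nu$ and $i<0$. First I would observe that in $\overline{\Delta}(\mu)$ the only simple composition factors (up to shift) that occur are $L(\nu)$ with $\nu\preceq\mu$: indeed, any simple subquotient $L(\nu)$ in positive degree would be the top of some $P(\nu)\langle i\rangle$, $i<0$, mapping nontrivially into a high quotient of $P(\mu)$; if $\nu\succ\mu$ or $\nu$ incomparable-but-not-below this forces a contradiction, and if $\mu\preceq\nu$ the corresponding generator has been killed. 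More precisely, the standard argument is: the degree-zero part is $\Bbbk e_\mu$ (one-dimensional), and I claim that $\mathrm{r}(A)$ acts "nilpotently enough" — every composition factor $L(\nu)\langle -i\rangle$ with $i>0$ appearing in $\overline{\Delta}(\mu)$ must have $\nu\sim\mu$, because the factors with $\nu\succ\mu$ are excluded by maximality-type arguments in the stratification and the factors with $\mu\preceq\nu$, $i>0$ have been quotiented out by definition. Restricting to the equivalence class $\overline{\mu}$, the algebra behaves like $A_0$-semisimple modulo radical on that class, and a bounded-below graded module generated in degree zero over a positively graded algebra whose relevant "slice" algebra is finite-dimensional in the needed sense is finite dimensional.

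A cleaner route, which I expect is the intended one, uses induction via the quotients $B_{\overline{\lambda}}$ together with Lemma~\ref{lem3}. Pick $\lambda\in\Lambda$ maximal. If $\mu\not\sim\lambda$, then $\overline{\nabla}(\mu)$ is computed inside $B_{\overline{\lambda}}$ (the defining maps into $P(\mu)$ from $P(\nu)$ with $\nu\succeq\mu$ never involve $P(\lambda)$ in an essential way once one passes to the quotient, and Lemma~\ref{lem3} guarantees the proper costandard module is unchanged), so we conclude by induction on $|\overline{\Lambda}|$. If $\mu\sim\lambda$, i.e. $\overline{\mu}=\overline{\lambda}$ is maximal, then I claim $\overline{\nabla}(\mu)$ is a module over the semisimple-up-to-grading piece $e_{\overline\lambda} A e_{\overline\lambda}$ localized appropriately, and in fact is a quotient of $\nabla(\mu)$ that is concentrated in finitely many degrees because the maximality of $\overline{\lambda}$ and the definition (killing all $i<0$ contributions from $P(\nu)$, $\nu\succeq\mu$, which is now \emph{all} relevant $\nu$) leaves only degree-zero generators and forces the whole module into degree zero — hence $\overline{\nabla}(\mu)=L(\mu)$ in that case. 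The main obstacle is the bookkeeping in the mixed case: verifying carefully that passing to $B_{\overline{\lambda}}$ genuinely does not change $\overline{\nabla}(\mu)$ when $\mu\not\sim\lambda$ — one must check that the submodule of $P(\mu)$ generated by negative-degree images of $P(\nu)$, $\nu\succeq\mu$, contains $I_{\overline{\lambda}}P(\mu)$ (equivalently, that the $A$-module and $B_{\overline\lambda}$-module presentations of $\overline{\nabla}(\mu)$ coincide), which is where maximality of $\lambda$ and the standardly stratified hypothesis are really used. Everything else is a routine induction.
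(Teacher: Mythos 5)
There is a genuine gap, and it sits exactly where the difficulty of the lemma lies: the case $\mu\in\overline{\lambda}$ with $\overline{\lambda}$ maximal and $|\overline{\Lambda}|>1$.

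Your claim in that case that ``$\overline{\nabla}(\mu)=L(\mu)$'' is false. For $\mu$ maximal, the proper costandard module $\overline{\nabla}(\mu)$ has socle $L(\mu)$ in degree $0$ and may well have composition factors $L(\nu)\langle j\rangle$ with $\nu\prec\mu$ in strictly negative degrees; those are \emph{not} removed by the defining condition, which only kills contributions indexed by $\nu$ with $\mu\preceq\nu$. The paper's own Example~\ref{exm3} gives $\overline{\nabla}(2)$ two-dimensional with $2$ maximal. More seriously, your argument never invokes the standardly stratified hypothesis at all, yet the lemma is false without it: if projectives are allowed infinite standard filtrations (as in Example~\ref{exm1}), proper costandard modules can be infinite-dimensional. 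A correct proof must use the finiteness of standard filtrations of projectives somewhere, and yours does not.

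The paper's actual argument in the maximal case is a contradiction argument using exactly this hypothesis: if $\overline{\nabla}(\mu)$ were unbounded below in the grading, one would get nonzero maps $P(\nu)\langle j_l\rangle\to\overline{\nabla}(\mu)$ for infinitely many $j_l>0$; each such map, after shifting, produces a subquotient of $P(\nu)$ with top $L(\nu)$, socle $L(\mu)\langle -j_l\rangle$, and all other factors indexed by $\nu'\prec\mu$, which (by maximality of $\mu$) forces an occurrence of $\Delta(\mu)\langle -j_l\rangle$ in the standard filtration of $P(\nu)$. Infinitely many $j_l$'s then contradict the finiteness of that filtration. Your first, ``direct'' paragraph (radical acting ``nilpotently enough'', relevant slice algebra ``finite-dimensional in the needed sense'') is too vague to stand and in fact makes claims that fail for algebras with loops; your reduction via $B_{\overline\lambda}$ for $\mu\not\sim\lambda$ is fine and matches the paper, but it is the maximal case that needs an argument, and that argument is missing.
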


\begin{proof}
We proceed by induction on the cardinality of $\overline{\Lambda}$.
If $|\overline{\Lambda}|=1$, then all $\Delta(\lambda)$ are projective
and all $\overline{\nabla}(\mu)$ are simple, so the claim is trivial.

Assume now that $|\overline{\Lambda}|>1$. Let $\lambda\in\Lambda$ be 
maximal. Then for all $\mu\not\in \overline{\lambda}$, the claim follows
from the inductive assumption applied to the stratified algebra
$B_{\overline{\lambda}}$. 

Assume, finally, that $\mu\in \overline{\lambda}$ is such that
$\overline{\nabla}(\mu)\not\in  A^{\downarrow}\text{-}\mathrm{gmod}$.
Then there exists $\nu\in\Lambda$ and an infinite sequence 
$0<j_1<j_2<\dots $ of positive integers such that for any $l\in\mathbb{N}$
there exists a nonzero homomorphism  from $P(\nu)\langle j_l\rangle$
to $\overline{\nabla}(\mu)$. Let $M_l$ denote the image of this 
homomorphism. Then $M_l$ has simple top $L(\nu)\langle j_l\rangle$
and simple socle $L(\mu)$ and all other composition subquotients of
the form $L(\nu')\langle j\rangle$, where $\nu'\prec \mu$ and
$1\leq j\leq j_l-1$.

The module $M_l\langle -j_l\rangle$ is thus a quotient of $P(\nu)$. Then
the socle $L(\mu)\langle -j_l\rangle$ of $M_l\langle -j_l\rangle$ gives
rise to a nonzero homomorphism from $P(\mu)\langle -j_l\rangle$ to
$P(\nu)$. Since $\mu$ is maximal and all other composition subquotients of 
$M_l\langle -j_l\rangle$ are of the form $L(\nu')\langle j\rangle$ for
some $\nu'\prec \mu$, the above homomorphism gives rise to an
occurrence of the standard module $\Delta(\mu)\langle -j_l\rangle$
in the standard filtration of $P(\nu)$. However, we have infinitely many
$j_l$'s and, at the same time, the standard filtration of $P(\nu)$ is
finite. This is a contradiction, which yields the claim of the lemma.
\end{proof}

\begin{lemma}\label{lem5}
For all $i,j\in\mathbb{Z}$ such that $i\geq 0$, and all 
$\lambda,\mu\in\Lambda$ we have
\begin{displaymath}
\mathrm{ext}^i_A(\Delta(\lambda),\overline{\nabla}(\mu)\langle j\rangle)=
\begin{cases}
\Bbbk, & i=j=0,\lambda=\mu; \\
0, & \text{otherwise}.
\end{cases}
\end{displaymath}
\end{lemma}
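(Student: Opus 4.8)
The plan is to prove this by induction on $|\overline{\Lambda}|$, mimicking the classical orthogonality computation for (proper) standard and costandard modules but being careful about the infinite-dimensional setting. The base case $|\overline{\Lambda}|=1$ is immediate: then every $\Delta(\lambda)$ is projective and every $\overline{\nabla}(\mu)$ is simple (concentrated in degree $0$ after the obvious normalisation), so $\mathrm{ext}^i_A(\Delta(\lambda),\overline{\nabla}(\mu)\langle j\rangle)=\mathrm{hom}_A(\Delta(\lambda),\overline{\nabla}(\mu)\langle j\rangle)$ vanishes unless $i=0$, and for $i=0$ it is $\Bbbk$ exactly when $j=0$ and $\lambda=\mu$.

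For the inductive step, pick $\lambda_0\in\Lambda$ maximal and split into cases according to whether $\lambda$, $\mu$ lie in $\overline{\lambda_0}$. If neither $\lambda$ nor $\mu$ is in $\overline{\lambda_0}$, then both $\Delta(\lambda)$ and $\overline{\nabla}(\mu)$ are modules over $B_{\overline{\lambda_0}}$, they coincide with the corresponding standard and proper costandard modules for that algebra (this is the usual behaviour of induction for stratified algebras and should be recorded as a small remark), and Lemma~\ref{lem3} reduces the claim to the inductive hypothesis for $B_{\overline{\lambda_0}}$. If $\mu\notin\overline{\lambda_0}$ but $\lambda\in\overline{\lambda_0}$, then $\Delta(\lambda)=P(\lambda)$ is projective (by maximality of $\lambda$), so the Ext groups vanish in positive degrees, and $\mathrm{hom}_A(P(\lambda),\overline{\nabla}(\mu)\langle j\rangle)$ picks out the multiplicity of $L(\lambda)$ in degree $-j$ of $\overline{\nabla}(\mu)$; since $\lambda$ is maximal and $\overline{\nabla}(\mu)$ is built from $L(\nu)$ with $\nu\preceq\mu$, no such $\lambda$ occurs and the Hom also vanishes, consistent with the stated formula. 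Symmetrically, if $\lambda\notin\overline{\lambda_0}$ but $\mu\in\overline{\lambda_0}$, then $\overline{\nabla}(\mu)$ is the proper costandard module for the maximal block, hence (up to shift) injective in a suitable quotient category, or more directly one uses that $\overline{\nabla}(\mu)$ has $L(\mu)$ in its top with $\mu$ maximal, so $\mathrm{hom}_A(\Delta(\lambda),\overline{\nabla}(\mu)\langle j\rangle)=0$ because the top of $\Delta(\lambda)$ is $L(\lambda)\ne L(\mu)$ and $\Delta(\lambda)$ has no subquotient $L(\mu)$ by definition of the standard module at a non-maximal $\lambda$; the higher Ext vanishing here will need the dual of Lemma~\ref{lem3} together with $\overline{\nabla}(\mu)$ being (co)induced, and this is the one case where I would want to think carefully.

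The remaining and genuinely substantive case is $\lambda,\mu\in\overline{\lambda_0}$. Here $\Delta(\lambda)=P(\lambda)$ is projective, so again $\mathrm{ext}^i_A(\Delta(\lambda),\overline{\nabla}(\mu)\langle j\rangle)=0$ for $i>0$, and the whole content is the computation of $\mathrm{hom}_A(P(\lambda),\overline{\nabla}(\mu)\langle j\rangle)$, i.e. of $[\overline{\nabla}(\mu):L(\lambda)\langle -j\rangle]$. By the very definition of the proper costandard module $\overline{\nabla}(\mu)$ as the dual of $\overline{\Delta}(\mu)$ (with respect to $\circledast$), this multiplicity equals $[\overline{\Delta}(\mu):L(\lambda)\langle j\rangle]$, and for $\lambda$ maximal in its block $\overline{\Delta}(\mu)$ has simple top $L(\mu)$, simple socle, and — because the defining quotient of $P(\mu)$ kills all positive-degree maps from projectives $P(\nu)$ with $\nu\succeq\mu$, in particular all of $\mathrm{r}(A)P(\lambda)$-type contributions from the block — its only composition factor of the form $L(\lambda)\langle i\rangle$ with $\lambda\in\overline{\lambda_0}$ is $L(\mu)$ itself in degree $0$. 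Thus the multiplicity is $1$ if $\lambda=\mu$ and $j=0$, and $0$ otherwise. The main obstacle I anticipate is making this last multiplicity statement airtight: one must show that $\overline{\Delta}(\mu)$ contains no composition factor $L(\nu)\langle i\rangle$ with $\nu\in\overline{\lambda_0}$ other than its top, which is exactly the content of the "proper" in proper standard module for a maximal block and follows from the fact that $A_0e_{\overline{\lambda_0}}$ is semisimple so that within the block all of $\mathrm{r}(A)$ acts in strictly positive degree, combined with the defining relations cutting out those degrees; I would spell this out using the argument already used in the proof of Lemma~\ref{lem4}.
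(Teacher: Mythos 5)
Your base case, your reduction for $\lambda,\mu\notin\overline{\lambda_0}$ via Lemma~\ref{lem3}, and your treatment of $\lambda\in\overline{\lambda_0}$ (where $\Delta(\lambda)=P(\lambda)$ is projective) all agree with the paper, although the paper disposes of the last of these in one line rather than computing multiplicities in $\overline{\Delta}(\mu)$. The genuine gap is exactly where you yourself hesitate: the case $\mu\in\overline{\lambda_0}$, $\lambda\notin\overline{\lambda_0}$. Your ``hom vanishes because $\Delta(\lambda)$ has no subquotient $L(\mu)$'' is fine (note $L(\mu)$ is the \emph{socle}, not the top, of $\overline{\nabla}(\mu)$, but the argument still works). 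For the higher $\mathrm{ext}$, however, your proposed route through ``the dual of Lemma~\ref{lem3}'' or ``$\overline{\nabla}(\mu)$ being (co)induced'' does not go through as stated: Lemma~\ref{lem3} concerns modules over $B_{\overline{\lambda_0}}=A/Ae_{\overline{\lambda_0}}A$, and $\overline{\nabla}(\mu)$ with $\mu\in\overline{\lambda_0}$ is emphatically \emph{not} such a module (its socle is $L(\mu)$). Realizing $\overline{\nabla}(\mu)$ as a coinduced $eAe$-module and invoking a derived adjunction might be salvageable, but you would then have to verify the relevant exactness/acyclicity for that adjunction in the infinite-dimensional graded setting --- a nontrivial side project that the paper sidesteps.

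The paper's actual argument in this case is a two-step argument you have not found. First, $\mathrm{ext}^1$-vanishing is obtained by applying $\mathrm{hom}_A(\Delta(\lambda),{}_-)$ to the short exact sequence $\overline{\nabla}(\mu)\langle j\rangle\hookrightarrow I(\mu)\langle j\rangle\tto\mathrm{Coker}$ and observing that the socle of $\mathrm{Coker}$ only involves $L(\nu)$ with $\nu\in\overline{\lambda_0}$, so $\mathrm{hom}_A(\Delta(\lambda),\mathrm{Coker})=0$; together with injectivity of $I(\mu)$ this kills $\mathrm{ext}^1$. Second, for $i\geq 2$ the paper runs a \emph{second} induction, this time on $\lambda$ with respect to $\preceq$ (base case $\lambda$ maximal, already done): applying $\mathrm{hom}_A({}_-,\overline{\nabla}(\mu)\langle j\rangle)$ to $\mathrm{Ker}\hookrightarrow P(\lambda)\tto\Delta(\lambda)$ and using that $\mathrm{Ker}$ has a \emph{finite} standard filtration with factors $\Delta(\nu)$ for $\lambda\prec\nu$, dimension shifting gives $\mathrm{ext}^i_A(\Delta(\lambda),\cdot)\cong\mathrm{ext}^{i-1}_A(\mathrm{Ker},\cdot)=0$. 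Neither the $I(\mu)$/socle step nor the inner induction on $\lambda$ appears in your sketch, so as written your proof is incomplete precisely at the lemma's crux.
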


\begin{proof}
We proceed by induction on the cardinality of $\overline{\Lambda}$.
If $|\overline{\Lambda}|=1$, then all $\Delta(\lambda)$ are projective
and all $\overline{\nabla}(\mu)$ are simple, so the claim is trivial.

Assume now that $|\overline{\Lambda}|>1$. Let $\lambda'\in\Lambda$ be 
maximal. Then, by definitions, the module $\Delta(\lambda)$ is projective 
for all $\lambda\in \overline{\lambda'}$. Hence for such $\lambda$ the 
claim of the lemma follows from the definition of $\overline{\nabla}(\mu)$. 
If $\lambda,\mu\not\in \overline{\lambda'}$, the claim follows from the
inductive assumption applied to the standardly stratified algebra 
$B_{\overline{\lambda'}}$ and Lemma~\ref{lem3}. 

Consider now the case when $\mu\in \overline{\lambda'}$ and $\lambda\not\in
\overline{\lambda'}$. Then $\Delta(\lambda)$ does not have any composition
subquotient of the form $L(\mu)\langle j\rangle$ and hence
\begin{displaymath}
\mathrm{hom}_A(\Delta(\lambda),\overline{\nabla}(\mu)\langle j\rangle)=0.
\end{displaymath}
Let us check that
\begin{equation}\label{eq1}
\mathrm{ext}^1_A(\Delta(\lambda),\overline{\nabla}(\mu)\langle j\rangle)=0
\end{equation}
for all $j$. Applying $\mathrm{hom}_A(\Delta(\lambda),{}_-)$ to the
short exact sequence 
\begin{displaymath}
\overline{\nabla}(\mu)\langle j\rangle\hookrightarrow
I(\mu)\langle j\rangle\tto \mathrm{Coker},
\end{displaymath}
we obtain the exact sequence
\begin{displaymath}
\mathrm{hom}_A(\Delta(\lambda),\mathrm{Coker})\to
\mathrm{ext}^1_A(\Delta(\lambda),\overline{\nabla}(\mu)\langle j\rangle)\to 
\mathrm{ext}^1_A(\Delta(\lambda),I(\mu)\langle j\rangle).
\end{displaymath}
Here the right term equals zero by the injectivity of $I(\mu)$. 
By the definition of $\overline{\nabla}(\mu)$, the socle of 
$\mathrm{Coker}$ has (up to shift) only simple modules of the form 
$L(\nu)$, where $\nu\in \overline{\lambda'}$, which implies that 
the left term equals zero as well.
The equality \eqref{eq1} follows.

Now we prove our claim by induction on $\lambda$ with respect to 
$\preceq$ (as mentioned above, the claim is true for $\lambda$ maximal).
Apply $\mathrm{hom}_A({}_-,\overline{\nabla}(\mu)\langle j\rangle)$ to the
short exact sequence
\begin{equation}\label{eq3}
\mathrm{Ker}\hookrightarrow P(\lambda)\tto \Delta(\lambda)
\end{equation}
and, using the projectivity of $P(\lambda)$, for each $i>1$ obtain 
the following exact sequence:
\begin{displaymath}
0\to\mathrm{ext}^{i-1}_A(\mathrm{Ker},\overline{\nabla}(\mu)\langle j\rangle)
\to\mathrm{ext}^{i}_A(\Delta(\lambda),\overline{\nabla}(\mu)\langle j\rangle)
\to0.
\end{displaymath}
Since $A$ is standardly stratified, $\mathrm{Ker}$ has a finite filtration
by standard modules of the form $\Delta(\nu)$, where $\lambda\prec\nu$,
(up to shift). Hence, from the inductive assumption we get  
$\mathrm{ext}^{i-1}_A(\mathrm{Ker},\overline{\nabla}(\mu)\langle j\rangle)=0$. 
This yields $\mathrm{ext}^{i}_A(\Delta(\lambda),
\overline{\nabla}(\mu)\langle j\rangle)=0$ and completes the proof.
\end{proof}

\begin{corollary}\label{corlem5}
Let $A$ be a positively graded standardly stratified algebra.
\begin{enumerate}[(i)]
\item\label{corlem5.1}
For any $M\in\mathcal{F}^{\downarrow}(\Delta)$,
$\lambda\in\Lambda$, $i\in\mathbb{N}$ and $j\in\mathbb{Z}$ we have
$\mathrm{ext}^i(M,\overline{\nabla}(\lambda)\langle j\rangle)=0$.
\item\label{corlem5.2}
For any $M\in\mathcal{F}^{\downarrow}(\overline{\nabla})$,
$\lambda\in\Lambda$, $i\in\mathbb{N}$ and $j\in\mathbb{Z}$ we have
$\mathrm{ext}^i(\Delta(\lambda)\langle j\rangle,M)=0$.
\end{enumerate}
\end{corollary}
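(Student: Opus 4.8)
The plan is to deduce this directly from Lemma~\ref{lem5} by two nested induction/filtration arguments, using the long exact sequences in $\mathrm{ext}$. Since the two statements are dual, I will carry out \eqref{corlem5.1} and indicate the obvious modification for \eqref{corlem5.2}. The overall strategy: first handle a single module in $\mathcal{C}(\Delta)$ (i.e.\ one with a possibly infinite standard filtration), then promote to $\mathcal{F}^{\downarrow}(\Delta)$ by a trivial finite-step induction using that $\mathrm{ext}^i$ takes short exact sequences in the first variable to long exact sequences.

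\emph{Step 1: finite filtrations.} If $M$ has a \emph{finite} filtration whose subquotients are shifts of standard modules, then iterating the long exact sequence obtained by applying $\mathrm{hom}_A({}_-,\overline{\nabla}(\lambda)\langle j\rangle)$ to the successive short exact sequences, and invoking Lemma~\ref{lem5} (which gives vanishing of $\mathrm{ext}^i_A(\Delta(\nu)\langle k\rangle,\overline{\nabla}(\lambda)\langle j\rangle)$ for all $i>0$), yields $\mathrm{ext}^i_A(M,\overline{\nabla}(\lambda)\langle j\rangle)=0$ for all $i>0$. This is routine.

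\emph{Step 2: infinite filtrations.} The essential point is an arbitrary $M\in\mathcal{C}(\Delta)$ with filtration \eqref{eqfiltration}. Fix $i\geq 1$, $\lambda$, $j$, and an extension class in $\mathrm{ext}^i_A(M,\overline{\nabla}(\lambda)\langle j\rangle)$; I want to show it vanishes. The key observation is that $\overline{\nabla}(\lambda)\langle j\rangle$ is finite dimensional by Lemma~\ref{lem4}, hence concentrated in finitely many degrees, and that the minimal projective resolution $\mathcal{P}^\bullet$ of $M$ is such that $\mathcal{P}^i$ lives in degrees $\geq\mathfrak{b}(M)$; more precisely, since $A$ is positively graded, the bottom degree of $\mathcal{P}^i$ is nondecreasing, but that alone is not enough. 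Instead I would argue as follows: because $\lim_{k\to\infty}\mathfrak{b}(M^{(k)})=+\infty$, choose $k$ so large that $\mathfrak{b}(M^{(k)})$ exceeds the top nonzero degree of $\overline{\nabla}(\lambda)\langle j\rangle$ by more than any relevant bound; then $M^{(k)}\in A^{\downarrow}\text{-}\mathrm{gmod}$ has its bottom degree high enough that $\mathrm{ext}^t_A(M^{(k)},\overline{\nabla}(\lambda)\langle j\rangle)=0$ for all $t\geq 0$ (a morphism or extension into a module concentrated in low degrees from a module concentrated in high degrees, computed via a projective resolution living in high degrees, must vanish — this uses positivity of the grading and local finiteness). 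The finite quotient $M/M^{(k)}$ lies in $\mathcal{F}^{b}(\Delta)$, so by Step 1 $\mathrm{ext}^i_A(M/M^{(k)},\overline{\nabla}(\lambda)\langle j\rangle)=0$. The short exact sequence $M^{(k)}\hookrightarrow M\tto M/M^{(k)}$ then gives, in the long exact sequence, that $\mathrm{ext}^i_A(M,\overline{\nabla}(\lambda)\langle j\rangle)$ is squeezed between $\mathrm{ext}^i_A(M/M^{(k)},-)=0$ and $\mathrm{ext}^i_A(M^{(k)},-)=0$, hence vanishes.

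\emph{Step 3: from $\mathcal{C}(\Delta)$ to $\mathcal{F}^{\downarrow}(\Delta)$.} Finally, any $M\in\mathcal{F}^{\downarrow}(\Delta)$ has a finite filtration with subquotients in $\mathcal{C}(\Delta)$; applying Step 2 to each subquotient and the long exact sequence finitely many times gives the result. Part \eqref{corlem5.2} is proved the same way, applying $\mathrm{hom}_A(\Delta(\lambda)\langle j\rangle,{}_-)$ instead, using Lemma~\ref{lem5} in its other variable, and using that $\Delta(\lambda)\langle j\rangle$ is a fixed module admitting a finite projective resolution in low degrees while the $\mathcal{C}(\overline{\nabla})$-filtration has subquotients concentrated in high degrees (here one uses that proper costandard modules are finite dimensional, Lemma~\ref{lem4}).

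The main obstacle is Step 2: making precise the claim that for $M^{(k)}$ with sufficiently large bottom degree, \emph{all} higher extensions into the fixed finite-dimensional module $\overline{\nabla}(\lambda)\langle j\rangle$ vanish. One must be careful that passing to an infinite filtration does not introduce extension classes "at infinity"; the control comes precisely from the hypothesis $\lim_{k\to\infty}\mathfrak{b}(M^{(k)})=+\infty$ together with the fact that the minimal projective resolution of a module in $A^{\downarrow}\text{-}\mathrm{gmod}$ is itself bounded below in each homological degree by the module's bottom degree, so that $\mathrm{hom}_A(\mathcal{P}^t,\overline{\nabla}(\lambda)\langle j\rangle)$ already vanishes once $\mathfrak{b}(M^{(k)})$ is large.
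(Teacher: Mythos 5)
Your proposal is correct and follows essentially the same route as the paper: reduce to a single possibly-infinite standard filtration, truncate it at a large $n$ using $\lim_{k\to\infty}\mathfrak{b}(M^{(k)})=+\infty$ together with the finite dimensionality of $\overline{\nabla}(\lambda)$ from Lemma~\ref{lem4}, kill the tail $M^{(n)}$ by a degree argument (minimal projective resolution of $M^{(n)}$ lies in degrees $\geq\mathfrak{b}(M^{(n)})$, which exceed the degrees where $\overline{\nabla}(\lambda)\langle j\rangle$ lives), and kill the finite head $M/M^{(n)}$ by Lemma~\ref{lem5} and the long exact sequence. Your Steps 1 and 3 make explicit the reductions that the paper dispatches in one sentence. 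The only place where you are somewhat thinner than the paper is part \eqref{corlem5.2}: there the decisive point is not finite dimensionality of proper costandards but rather that $\Delta(\lambda)\langle j\rangle$ has a \emph{finite} minimal projective resolution whose terms are \emph{finite} direct sums of shifted projectives --- this follows by induction on $\preceq$ from the requirement that projectives have finite standard filtrations --- which gives a uniform bound on the degrees of the generators of the resolution, so that maps to $M^{(n)}$ vanish once $\mathfrak{b}(M^{(n)})$ is large enough; you state the needed fact (``finite projective resolution in low degrees'') without supplying this justification, and your appeal to Lemma~\ref{lem4} at that point is not really what is doing the work. This is a minor gloss rather than a gap.
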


\begin{proof}
It is certainly enough to prove statement \eqref{corlem5.1} 
in the case when $M$ has a filtration of the form \eqref{eqfiltration}.
As $\lim_{i\rightarrow+\infty}\mathfrak{b}
(M^{(i)})=+\infty$ and $\overline{\nabla}(\lambda)$ is finite dimensional
(Lemma~\ref{lem4}),
there exists $n\in\mathbb{Z}$ such that for any $i\in\mathbb{Z}$
with $\overline{\nabla}(\lambda)\langle j\rangle_i\neq 0$ we
have $i<\mathfrak{b}(M^{(n)})$. Since $A$ is positively graded,
there are no homomorphisms from any component of the projective 
resolution of $M^{(n)}$ to $\overline{\nabla}(\lambda)\langle j\rangle$.
This means that all extentions from $M^{(n)}$ to 
$\overline{\nabla}(\lambda)\langle j\rangle$ vanish. At the same time,
the quotient $M/M^{(n)}$ has a finite filtration by standard modules
and hence all extensions from $M/M^{(n)}$ to 
$\overline{\nabla}(\lambda)\langle j\rangle$  vanish by
Lemma~\ref{lem5}. Statement \eqref{corlem5.1} follows.

It is certainly enough to prove statement \eqref{corlem5.2} 
in the case when $M$ has a filtration of the form \eqref{eqfiltration}
(with subquotients being proper costandard modules). 
Let $\mathcal{P}^{\bullet}$ be the minimal projective resolution
of $\Delta(\lambda)\langle j\rangle$. As
every indecomposable projective has a finite standard filtration, 
it follows that $\mathcal{P}^{\bullet}$ has only finitely many
nonzero components, moreover, each of them is a finite direct sum of
projective modules. As $\lim_{i\rightarrow+\infty}\mathfrak{b}
(M^{(i)})=+\infty$, there exists $n\in\mathbb{N}$ such that
there are no maps from any $\mathcal{P}^{i}$ to $M^{(n)}$, in particular,
all extensions from $\Delta(\lambda)\langle j\rangle$
to $M^{(n)}$ vanish. At the same time, the quotient $M/M^{(n)}$ has a 
finite filtration by proper costandard modules and hence all extensions 
from $\Delta(\lambda)\langle j\rangle$ to 
$M/M^{(n)}$ vanish by Lemma~\ref{lem5}.
Statement \eqref{corlem5.2} follows and the proof is complete.
\end{proof}

The following lemma is just an observation that the category
$\mathcal{F}^{\downarrow}(\overline{\nabla})$ can, in fact, be
defined in a somewhat easier way than the one we used. For the category
$\mathcal{F}^{\downarrow}(\Delta)$ this is not possible in the 
general case, see Example~\ref{exm2}.

\begin{lemma}\label{lemlemnew}
Any module from $\mathcal{F}^{\downarrow}(\overline{\nabla})$ has a 
filtration of the form \eqref{eqfiltration}.
\end{lemma}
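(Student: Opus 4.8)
The plan is to show that any module $M \in \mathcal{F}^{\downarrow}(\overline{\nabla})$ — which by definition merely admits a \emph{finite} filtration whose subquotients lie in $\mathcal{C}(\overline{\nabla})$, i.e.\ each subquotient itself has a (possibly infinite) filtration of the form \eqref{eqfiltration} with proper costandard subquotients — can be re-assembled into a single filtration of the form \eqref{eqfiltration}. The main obstacle is purely organizational: one must splice together finitely many infinite filtrations into one infinite filtration while preserving the two defining properties, namely that every subquotient is (a shift of) some $\overline{\nabla}(\lambda)$, and that $\mathfrak{b}(M^{(i)}) \to +\infty$. The splicing must be done so that only finitely many terms of the combined filtration sit below any given degree, which is where the positive-grading hypothesis and finite-dimensionality of proper costandard modules (Lemma~\ref{lem4}) enter.

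First I would reduce to the case of a filtration of length two: if $0 \subseteq M' \subseteq M$ with $M' \in \mathcal{C}(\overline{\nabla})$ and $M/M' \in \mathcal{C}(\overline{\nabla})$, and I can produce a filtration of the form \eqref{eqfiltration} for $M$ itself, then an easy induction on the length of the finite filtration finishes the proof. So fix such an $M'$. By Corollary~\ref{corlem5}\eqref{corlem5.2}, every standard module has vanishing higher (and degree-shifted) extensions into any module of $\mathcal{F}^{\downarrow}(\overline{\nabla})$; in particular $\mathrm{ext}^1_A(\Delta(\lambda)\langle j\rangle, M') = 0$ for all $\lambda, j$, so by Theorem~\ref{thm2}\eqref{thm2-2} the \emph{quotient} $M/M'$, being in $\mathcal{F}^{\downarrow}(\overline{\nabla})$, has the property that the given filtration $M/M' = (M/M')^{(0)} \supseteq (M/M')^{(1)} \supseteq \cdots$ of the form \eqref{eqfiltration} lifts: one pulls each term back to a submodule $N^{(k)}$ of $M$ containing $M'$, with $N^{(k)}/N^{(k+1)} \cong (M/M')^{(k)}/(M/M')^{(k+1)}$ a shifted proper costandard module and $\bigcap_k N^{(k)} = M'$.

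Next I would concatenate: take $M^{(k)} := N^{(k)}$ for $k = 0, 1, 2, \dots$ and then continue with the (shifted) infinite filtration of $M' = M^{(1)}_{\mathrm{old}}$ coming from $M' \in \mathcal{C}(\overline{\nabla})$, reindexing appropriately. The subquotients are all shifted proper costandard modules by construction, so the only thing to check is $\lim_{i\to\infty}\mathfrak{b}(M^{(i)}) = +\infty$. For the tail coming from $M'$ this is immediate from $M' \in \mathcal{C}(\overline{\nabla})$. For the initial segment $N^{(k)}$: since $N^{(k)}/N^{(k+1)}$ is a shift of a proper costandard module and $\mathfrak{b}((M/M')^{(k)}) \to +\infty$, and since $M' \in A^{\downarrow}\text{-}\mathrm{gmod}$ so that $\mathfrak{b}(M') > -\infty$ is some fixed integer, we get $\mathfrak{b}(N^{(k)}) \geq \min\{\mathfrak{b}((M/M')^{(k)}), \mathfrak{b}(M')\}$ is eventually $\geq$ any prescribed bound once the first term dominates — wait, that is not quite it, since $\mathfrak{b}(M')$ is a fixed finite number that does not grow. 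The correct point is that $\bigcap_k N^{(k)} = M'$, and for the concatenation we should \emph{not} keep $M'$ as a fixed floor: instead, interleave. The clean fix is to observe that $\mathfrak{b}$ of the piece of $N^{(k)}$ \emph{outside} $M'$ tends to infinity, so after re-indexing — listing first those layers of $M/M'$ with low $\mathfrak{b}$, then switching to layers of $M'$ with low $\mathfrak{b}$, alternating by a diagonal argument — one gets a genuine filtration of the form \eqref{eqfiltration}. Concretely, since both $M'$ and $M/M'$ have filtrations of type \eqref{eqfiltration}, for each $n$ only finitely many layers of each have $\mathfrak{b} \leq n$; hence one may order all the layers (those of $M/M'$, which sit on top, preceding the relevant layers of $M'$ whenever a submodule relation forces it, which it always does since $M' = \bigcap N^{(k)}$) so that $\mathfrak{b}$ of the partial intersections increases to $+\infty$. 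I expect the diagonal re-indexing to be the fiddly step, but it is a soft argument using only local finiteness and $\mathfrak{b}(M^{(i)}) \to +\infty$ for the two given sub-filtrations; no homological input beyond Theorem~\ref{thm2}\eqref{thm2-2} and Corollary~\ref{corlem5}\eqref{corlem5.2} is needed.
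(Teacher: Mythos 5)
Your reduction to an extension $0\to M'\to M\to M/M'\to 0$ with $M',M/M'\in\mathcal{C}(\overline{\nabla})$ is the same as the paper's, and you correctly diagnose why naive concatenation fails: the pullbacks $N^{(k)}=\pi^{-1}\bigl((M/M')^{(k)}\bigr)$ all contain $M'$, so $\mathfrak{b}(N^{(k)})$ is bounded by $\mathfrak{b}(M')$ and does not go to $+\infty$. You also correctly identify the fix in spirit: one must interleave the two filtrations by $\mathfrak{b}$. But the proof as written stops exactly where the real work begins. ``Diagonal re-indexing'' is not a construction: a filtration of $M$ of the form \eqref{eqfiltration} is a chain of genuine submodules of $M$, and once you have peeled off the first $k$ layers of $M/M'$ (arriving at $N^{(k)}\supseteq M'$), to ``switch to a layer of $M'$'' you must exhibit a submodule $N^{(k+1)}\subseteq N^{(k)}$ with $N^{(k+1)}\cap M'=(M')^{(1)}$ and $N^{(k)}/N^{(k+1)}\cong (M')^{(0)}/(M')^{(1)}$, while still leaving the unprocessed high layers of $M/M'$ intact above $N^{(k+1)}$. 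Such a submodule is not a ``choice of ordering'' of the given layers; it is a new object, and your proposal never produces it. The parenthetical ``those of $M/M'$ \dots preceding the relevant layers of $M'$ whenever a submodule relation forces it, which it always does since $M'=\bigcap N^{(k)}$'' is, if anything, an argument \emph{against} the possibility of interleaving, and contradicts the fix you are proposing.

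The paper supplies precisely this missing construction. Choose $k$ so that $(M')^{(0)}/(M')^{(1)}$ lives entirely in degrees $<\mathfrak{b}\bigl((M/M')^{(k)}\bigr)$; this is possible because proper costandard modules are finite dimensional (Lemma~\ref{lem4}) and $\mathfrak{b}\bigl((M/M')^{(k)}\bigr)\to+\infty$. Then set $N^{(k+1)}$ to be the submodule of $N^{(k)}$ generated by $(M')^{(1)}$ together with all homogeneous components $N^{(k)}_i$ with $i\geq\mathfrak{b}\bigl((M/M')^{(k)}\bigr)$. Positivity of the grading gives $N^{(k+1)}\cap M'=(M')^{(1)}$ and $N^{(k+1)}+M'=N^{(k)}$, hence $N^{(k)}/N^{(k+1)}\cong (M')^{(0)}/(M')^{(1)}$; one then repeats inductively. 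This explicit use of the grading to ``cut below'' a proper costandard layer is the content of the lemma, and your proof does not contain it. As a secondary point, the appeals to Theorem~\ref{thm2}\eqref{thm2-2} and Corollary~\ref{corlem5}\eqref{corlem5.2} are unnecessary here (the filtration of $M/M'$ is given, not deduced), and since Lemma~\ref{lemlemnew} precedes Theorem~\ref{thm2} in the paper one should avoid leaning on the latter even though no genuine circularity results.
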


\begin{proof}
Let $X,Z\in \mathcal{C}$ and
\begin{displaymath}
X=X^{(0)}\supseteq X^{(1)}\supseteq X^{(2)}\supseteq\dots,
\end{displaymath}
and
\begin{displaymath}
Z=Z^{(0)}\supseteq Z^{(1)}\supseteq Z^{(2)}\supseteq\dots,
\end{displaymath}
be filtrations of the form \eqref{eqfiltration}. Assume that 
$Y\in A^{\downarrow}\text{-}\mathrm{gmod}$ is such that there is
a short exact sequence 
\begin{displaymath}
0\to X\to Y\to Z\to 0. 
\end{displaymath}
To prove the claim of the lemma it is enough to show that 
$Y$ has a filtration of the form \eqref{eqfiltration}.

Since all costandard modules
are finite dimensional (Lemma~\ref{lem4}) and
$\displaystyle\lim_{i\rightarrow+\infty}\mathfrak{b}(Z^{(i)})=+\infty$,
there exists $k\in \{0,1,2,\dots\}$ such that for any $i\in\mathbb{Z}$
with $(X^{(0)}/X^{(1)})_i\neq 0$ we have $i<\mathbf{b}(Z^{(k)})$.

Now for $i=0,1,\dots,k$, we let $Y^{(i)}$ be the full preimage of
$Z^{(i)}$ in $Y$ under the projection $Y\tto Z$. 
In this way we get the first part of the 
filtration of $Y$ with proper costandard subquotients. On the
next step we let $Y^{(k+1)}$ denote the submodule of 
$Y^{(k)}$ generated by $X^{(1)}$ and $Y^{(k)}_i$, where
$i\geq \mathbf{b}(Z^{(k)})$.
Then $Y^{(k+1)}+X^{(0)}=Y^{(k)}$ by construction. At the same time,
from our choice of $k$ in the previous paragraph it follows that 
$Y^{(k+1)}\cap X^{(0)}=X^{(1)}$ and hence
\begin{displaymath}
Y^{(k)}/Y^{(k+1)}\cong  X^{(0)}/X^{(1)},
\end{displaymath}
which is a proper costandard module. 

Now we proceed in the same way constructing a proper costandard 
filtration for $Y^{(k+1)}$. The condition 
$\displaystyle\lim_{i\rightarrow+\infty}\mathfrak{b}(Y^{(i)})=+\infty$
follows from the construction. This completes the proof.
\end{proof}

\begin{lemma}\label{lem6}
Let $M\in A^{\downarrow}\text{-}\mathrm{gmod}$ be such that 
$\mathrm{ext}^{1}_A(\Delta(\lambda)\langle j\rangle,M)=0$ for all
$\lambda$ and $j$. Then $M\in \mathcal{F}^{\downarrow}(\overline{\nabla})$.
\end{lemma}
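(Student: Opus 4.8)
The plan is to prove the contrapositive-flavored statement directly: starting from a module $M$ with $\mathrm{ext}^1_A(\Delta(\lambda)\langle j\rangle, M) = 0$ for all $\lambda, j$, I will peel off proper costandard subquotients one at a time from the bottom, using maximality in the preorder to control what can appear. First I would pick $\lambda \in \Lambda$ maximal with respect to $\preceq$. For such $\lambda$ the module $\Delta(\lambda)$ is projective (indeed $\Delta(\lambda) = P(\lambda)$ up to the standardly stratified setup, since there are no $\mu \succ \lambda$), so the vanishing hypothesis at maximal $\lambda$ is automatic and carries no information; the real input comes from non-maximal indices. The key observation is that, by Lemma~\ref{lem5} and Theorem~\ref{thm2}\eqref{thm2-2} (whose proof we may invoke, or rather the characterization we are in the process of establishing — so I must be careful to use only Lemma~\ref{lem5} and Corollary~\ref{corlem5} here, not Theorem~\ref{thm2} itself), a module lies in $\mathcal{F}^{\downarrow}(\overline{\nabla})$ as soon as we can exhibit the filtration.

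The main step is an induction on $|\overline{\Lambda}|$, mirroring the proofs of Lemmas~\ref{lem4} and~\ref{lem5}. If $|\overline{\Lambda}| = 1$ every $\overline{\nabla}(\mu)$ is simple and every $\Delta(\lambda)$ is projective, so the hypothesis is vacuous and one must instead argue that \emph{every} module in $A^{\downarrow}\text{-}\mathrm{gmod}$ has a filtration of the form \eqref{eqfiltration} by simples — which follows since $A$ is positively graded and locally finite. For the inductive step, fix $\lambda' \in \Lambda$ maximal, put $e = e_{\overline{\lambda'}}$, and consider the trace $\mathrm{tr}_{P(\overline{\lambda'})}(M) \subseteq M$, i.e.\ the submodule generated by the images of all maps $P(\mu)\langle j\rangle \to M$ with $\mu \in \overline{\lambda'}$. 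Since $\lambda'$ is maximal and $A$ is standardly stratified, this trace should be filtered by standard modules $\Delta(\mu)\langle j\rangle = P(\mu)\langle j\rangle$ with $\mu \in \overline{\lambda'}$; one uses $\mathrm{ext}^1_A(\Delta(\mu)\langle j\rangle, -) = 0$ applied to the successive quotients to split off these projective standard modules cleanly, and the condition $\lim \mathfrak{b} \to +\infty$ on $M$ (which holds since $M \in A^{\downarrow}\text{-}\mathrm{gmod}$ and $A$ is positively graded) guarantees the filtration is of the right form. The quotient $N = M / \mathrm{tr}_{P(\overline{\lambda'})}(M)$ is then naturally a $B_{\overline{\lambda'}}$-module, and by Lemma~\ref{lem3} its ext-groups over $B_{\overline{\lambda'}}$ agree with those over $A$; moreover the vanishing $\mathrm{ext}^1_A(\Delta(\lambda)\langle j\rangle, N) = 0$ for $\lambda \notin \overline{\lambda'}$ should transfer from $M$ via the long exact sequence (here one needs that $\mathrm{ext}^1$ and $\mathrm{ext}^2$ of $\Delta(\lambda)\langle j\rangle$ against the trace vanish — the former by hypothesis-style arguments, controlled because the trace is $\Delta$-filtered by \emph{maximal} modules and Lemma~\ref{lem5} gives the needed orthogonality). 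By induction $N \in \mathcal{F}^{\downarrow}(\overline{\nabla})$ over $B_{\overline{\lambda'}}$, hence over $A$, and then an extension argument — precisely the gluing provided by Lemma~\ref{lemlemnew}, which shows that an extension of two modules each admitting a filtration of the form \eqref{eqfiltration} again admits one — assembles a $\overline{\nabla}$-filtration of $M$ from those of the trace and of $N$.

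The step I expect to be the main obstacle is showing that the trace submodule $\mathrm{tr}_{P(\overline{\lambda'})}(M)$ is genuinely $\Delta$-filtered with $\lim \mathfrak{b} \to +\infty$ (rather than merely generated by copies of $P(\mu)$), and that the relevant $\mathrm{ext}^1$ and $\mathrm{ext}^2$ terms against $\Delta(\lambda)\langle j\rangle$ for $\lambda \notin \overline{\lambda'}$ vanish so that the hypothesis descends to $N$. The finiteness of the standard filtrations of projectives is not assumed here, so one cannot argue the projective resolution of $\Delta(\lambda)$ is finite; instead the positivity of the grading together with $M \in A^{\downarrow}\text{-}\mathrm{gmod}$ must be used to truncate — for each fixed $\lambda, j$ only finitely many graded pieces of the resolution of $\Delta(\lambda)\langle j\rangle$ can map nontrivially into a given bounded-below module, which is exactly the mechanism already exploited in Corollary~\ref{corlem5}. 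Once these vanishings are in hand, the rest is the standard "peel a maximal piece, quotient, induct, glue" routine.
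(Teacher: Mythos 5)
There is a genuine gap, and it is the main conceptual step of the inductive argument. You peel off the \emph{trace} submodule $\mathrm{tr}_{P(\overline{\lambda'})}(M)$, i.e.\ the submodule generated by images of the projectives $P(\mu)\langle j\rangle$ with $\mu\in\overline{\lambda'}$. As you observe, if this works at all it produces a submodule of $M$ that is $\Delta$-filtered (by $\Delta(\mu)\cong P(\mu)$ for $\mu$ maximal) with a $B_{\overline{\lambda'}}$-module quotient. But the goal is a \emph{proper costandard} ($\overline{\nabla}$-)filtration of $M$, and a $\Delta$-filtration of the submodule piece is of no use here: $\Delta(\mu)=P(\mu)$ for maximal $\mu$ is in general \emph{not} in $\mathcal{F}^{\downarrow}(\overline{\nabla})$ (e.g.\ $\Delta(2)=P(2)$ in Example~\ref{exm3} is not tilting). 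So the final gluing step ``assembles a $\overline{\nabla}$-filtration of $M$ from those of the trace and of $N$'' cannot go through: Lemma~\ref{lemlemnew} only closes $\mathcal{F}^{\downarrow}(\overline{\nabla})$ under extensions of two modules that are \emph{both} proper-costandard-filtered, and you have only exhibited such a filtration for one of the two pieces.

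A second, related problem is the claimed vanishing $\mathrm{ext}^1$ and $\mathrm{ext}^2$ of $\Delta(\lambda)\langle j\rangle$ against the trace: you appeal to Lemma~\ref{lem5} for ``the needed orthogonality,'' but Lemma~\ref{lem5} gives $\mathrm{ext}^i_A(\Delta,\overline{\nabla})=0$, not $\mathrm{ext}^i_A(\Delta,\Delta)=0$; the latter can indeed be nonzero (compare Lemma~\ref{lem10}), so the hypothesis does not obviously descend to $M/\mathrm{tr}$ along the long exact sequence as you claim. What you have written is essentially the decomposition used for the \emph{dual} statement (Lemma~\ref{lem7}, about $\mathcal{F}^{\downarrow}(\Delta)$), where the trace of maximal standard modules \emph{is} the right submodule. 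For Lemma~\ref{lem6} the paper instead takes $N$ to be the \emph{maximal} submodule of $M$ with no composition factor $L(\mu)\langle j\rangle$, $\mu\in\overline{\lambda'}$ (a ``reject,'' the dual of a trace); then $N$ is a $B_{\overline{\lambda'}}$-module and the induction applies to it, while the cokernel $M/N$ has socle supported on $\overline{\lambda'}$, and one constructs its $\overline{\nabla}$-filtration directly, peeling off $\overline{\nabla}(\mu)\langle j'\rangle$ one at a time by choosing nonzero maps $M/N\to I(\mu)\langle j'\rangle$ with $j'$ maximal and showing the image is a proper costandard module. Your preliminary observations (base case, use of Lemma~\ref{lem3}, grading-truncation to control exts against bounded-below modules) are all sound, but the core decomposition has the roles of $\Delta$ and $\overline{\nabla}$ swapped.
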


\begin{proof}
First let us show that the conditions of the lemma imply
\begin{equation}\label{eq2}
\mathrm{ext}^{i}_A(\Delta(\lambda)\langle j\rangle,M)=0
\end{equation}
for all $j$, all $\lambda$ and all $i>0$. If $\lambda$ is maximal, then
the corresponding $\Delta(\lambda)$ is projective and the claim is clear.
Otherwise, we proceed by induction with respect to the preorder $\preceq$.
We apply $\mathrm{hom}_A({}_-,M)$ to the short exact sequence
\eqref{eq3} and the equality \eqref{eq2} follows from the inductive 
assumption by the dimension shift in the obtained long exact sequence.

We proceed by induction on the cardinality of $\overline{\Lambda}$.
If $|\overline{\Lambda}|=1$, then 
$\mathcal{F}^{\downarrow}(\overline{\nabla})=
A^{\downarrow}\text{-}\mathrm{gmod}$
and the claim is trivial.

Assume now that $|\overline{\Lambda}|>1$ and let $\lambda'\in\Lambda$
be maximal. Let $N$ denote the maximal submodule of $M$, which does
not contain any composition factors of the form $L(\mu)$, where
$\mu\in \overline{\lambda'}$ (up to shift). Let
$\nu\not\in \overline{\lambda'}$. Applying 
$\mathrm{hom}_A(\Delta(\nu)\langle j\rangle,{}_-)$ to the short 
exact sequence
\begin{equation}\label{eq4}
N\hookrightarrow M\tto\mathrm{Coker},
\end{equation}
we obtain the exact sequence
\begin{displaymath}
\mathrm{hom}_A(\Delta(\nu)\langle j\rangle,\mathrm{Coker})\to
\mathrm{ext}^1_A(\Delta(\nu)\langle j\rangle,N)\to
\mathrm{ext}^1_A(\Delta(\nu)\langle j\rangle,M).
\end{displaymath}
Here the right term is zero by our assumptions and the left term is zero
by the definition of $N$. This implies that the middle term is zero,
which yields $\mathrm{ext}^1_{B_{\overline{\lambda'}}}
(\Delta(\nu)\langle j\rangle,N)=0$
by Lemma~\ref{lem3}. Applying the inductive assumption to the
standardly stratified algebra $B_{\overline{\lambda'}}$, we obtain
that $N\in \mathcal{F}^{\downarrow}(\overline{\nabla})$. 

Since $\mathcal{F}^{\downarrow}(\overline{\nabla})$ is extension closed,
to complete the proof we are left to  show that 
$\mathrm{Coker}\in \mathcal{F}^{\downarrow}(\overline{\nabla})$.
Applying $\mathrm{hom}_A(\Delta(\lambda)\langle j\rangle,{}_-)$ to \eqref{eq4}
and using \eqref{eq2}, the previous paragraph and Lemma~\ref{lem5},
we obtain that 
\begin{equation}\label{eq5}
\mathrm{ext}^{i}_A(\Delta(\lambda)\langle j\rangle,\mathrm{Coker})=0
\end{equation}
for all $j$, $\lambda$ and $i>0$.

If $\mathrm{Coker}=0$, we are done. Otherwise, there exists some
$\mu\in\overline{\lambda'}$ and a maximal possible  $j'\in\mathbb{Z}$ 
such that there is a nonzero homomorphism from $\mathrm{Coker}$ to 
$I(\mu)\langle j'\rangle$. Let $K$ denote the image of this 
homomorphism. Applying 
$\mathrm{hom}_A(\Delta(\lambda)\langle j\rangle,{}_-)$ to the 
short exact sequence
\begin{equation}\label{eq6}
\mathrm{Ker}\hookrightarrow \mathrm{Coker}\tto K,
\end{equation}
and using the definition of $K$, we obtain that 
\begin{equation}\label{eq7}
\mathrm{ext}^{1}_A(\Delta(\lambda)\langle j\rangle,\mathrm{Ker})=0
\end{equation}
for all $\lambda$ and $j$. The equality \eqref{eq7},
the corresponding equalities \eqref{eq2} (for $M=\mathrm{Ker}$) 
and the dimension shift with respect to \eqref{eq6} then imply
\begin{equation}\label{eq8}
\mathrm{ext}^{1}_A(\Delta(\lambda)\langle j\rangle,K)=0
\end{equation}
for all $\lambda$ and $j$.

By the definition of $K$ we have a short exact sequence
\begin{equation}\label{eq9}
K\hookrightarrow \overline{\nabla}(\mu)\langle j'\rangle\tto C'
\end{equation}
for some cokernel $C'$. By the definition of $\overline{\nabla}(\mu)$, 
all composition subquotients of $C'$ have the form $L(\nu)$, 
where $\nu\prec \mu$ (up to shift).
Let $\lambda\in\Lambda$ be such that $\lambda\prec \mu$. Applying 
$\mathrm{hom}_A(\Delta(\lambda)\langle j\rangle,{}_-)$  
to \eqref{eq9} we get the exact  sequence
\begin{equation}\label{eq10}
\mathrm{hom}_A(\Delta(\lambda)\langle j\rangle,
\overline{\nabla}(\mu)\langle j'\rangle)
\to \mathrm{hom}_A(\Delta(\lambda)\langle j\rangle,C')\to
\mathrm{ext}^1_A(\Delta(\lambda)\langle j\rangle,K).
\end{equation}
Here the left term is zero by the definition of $\overline{\nabla}(\mu)$
and the right hand term is zero by \eqref{eq8}. This yields that the
middle term is zero as well and thus $C'=0$, that is 
$K$ is a proper costandard module. 

We can now apply the same arguments as above to the module 
$\mathrm{Ker}$ in place of $\mathrm{Coker}$ and get the short 
exact sequence
\begin{displaymath}
\mathrm{Ker}'\hookrightarrow\mathrm{Ker} \tto K',
\end{displaymath}
where $K'$ is proper costandard and 
$\mathrm{ext}^{1}_A(\Delta(\lambda)\langle j\rangle,
\mathrm{Ker}')=0$ for all $\lambda$ and $j$. Proceeding 
inductively we obtain a (possibly infinite) decreasing filtration 
\begin{displaymath}
\mathrm{Coker}\supseteq
\mathrm{Ker}\supseteq\mathrm{Ker}'\supseteq\dots
\end{displaymath}
with proper costandard subquotients.
That $\displaystyle\lim_{i\rightarrow+\infty}\mathfrak{b}
(\mathrm{Coker}^{(i)})=+\infty$ follows from the construction
since all our modules are from $A^{\downarrow}\text{-}\mathrm{gmod}$,
all proper costandard modules (subquotients of the filtration of
$\mathrm{Coker}$) are finite-dimensional by Lemma~\ref{lem4},
and there are only finitely many proper costandard modules up to
isomorphism and shift (which implies that dimensions of proper costandard
modules are uniformly bounded). Therefore we get $\mathrm{Coker}\in  
\mathcal{F}^{\downarrow}(\overline{\nabla})$. The claim of the
lemma follows.
\end{proof}

\begin{lemma}\label{lem7}
Let $M\in A^{\downarrow}\text{-}\mathrm{gmod}$ be such that 
$\mathrm{ext}^{1}_A(M,\overline{\nabla}(\mu)\langle j\rangle)=0$ for all
$\mu$ and $j$. Then $M\in \mathcal{F}^{\downarrow}(\Delta)$.
\end{lemma}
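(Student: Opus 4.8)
The plan is to mimic the structure of the proof of Lemma~\ref{lem6}, but now building a $\Delta$-filtration from the top rather than a $\overline{\nabla}$-filtration from the bottom, using the maximal elements of $\overline{\Lambda}$. First I would upgrade the hypothesis $\mathrm{ext}^1_A(M,\overline{\nabla}(\mu)\langle j\rangle)=0$ to $\mathrm{ext}^i_A(M,\overline{\nabla}(\mu)\langle j\rangle)=0$ for all $i>0$, $\mu$ and $j$: this is precisely the content of Theorem~\ref{thm2}(i) modulo the inclusion we are trying to prove, but it follows directly by the same dimension-shift argument as in Lemma~\ref{lem6}, applied to the short exact sequence $\mathrm{Ker}\hookrightarrow P(\lambda)\tto\Delta(\lambda)$ together with the finiteness of the standard filtration of $\mathrm{Ker}$ and Lemma~\ref{lem5}. (Strictly, one runs an induction along $\preceq$ and along $|\overline{\Lambda}|$; but since $\overline{\nabla}(\mu)$ is a module, $\mathrm{ext}^{>1}$ into it is governed by $\mathrm{ext}^1$ via resolutions of the $\Delta(\lambda)$'s, whose components are finite sums of projectives by assumption~\eqref{thm1-c1}/the standard-filtration hypothesis on $P(\lambda)$.)

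Next I would induct on $|\overline{\Lambda}|$. The base case $|\overline{\Lambda}|=1$ is trivial since then every $\Delta(\lambda)=P(\lambda)$ and $\mathcal{F}^{\downarrow}(\Delta)$ is... no: here one must be slightly careful, because $\mathcal{F}^{\downarrow}(\Delta)$ with projective standard modules is the category of modules with a filtration of the form \eqref{eqfiltration} by shifted projectives — so one genuinely needs to produce such a filtration of $M$. I would do this by peeling off, from the top of $M$, copies of $P(\nu)\langle j\rangle$ for $j$ maximal, exactly as $K$ was peeled off in Lemma~\ref{lem6}; the vanishing of $\mathrm{ext}^1$ into proper costandards (here: simples) forces the quotient to be standard, and the condition $\lim \mathfrak{b}(M^{(i)})=+\infty$ comes from $M\in A^{\downarrow}\text{-}\mathrm{gmod}$ plus uniform boundedness of $\dim\Delta(\nu)$.

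For the inductive step, pick $\lambda'\in\Lambda$ maximal and let $M'$ be the trace of $P(\overline{\lambda'})$ in $M$, i.e.\ the submodule generated by the images of all maps $P(\mu)\langle i\rangle\to M$ with $\mu\in\overline{\lambda'}$. The quotient $M/M'$ is a $B_{\overline{\lambda'}}$-module, and applying $\mathrm{hom}_A({}_-,\overline{\nabla}(\mu)\langle j\rangle)$ to $M'\hookrightarrow M\tto M/M'$ (noting $\mathrm{ext}^i_A(M/M',\overline{\nabla}(\mu))$ for $\mu\in\overline{\lambda'}$ vanishes for degree/support reasons, and for $\mu\notin\overline{\lambda'}$ agrees with the ext over $B_{\overline{\lambda'}}$ by Lemma~\ref{lem3}) shows $M/M'\in\mathcal{F}^{\downarrow}_{B_{\overline{\lambda'}}}(\Delta)$ by the inductive hypothesis, and also that $M'$ inherits the Ext-vanishing $\mathrm{ext}^1_A(M',\overline{\nabla}(\mu)\langle j\rangle)=0$. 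Since $\mathcal{F}^{\downarrow}(\Delta)$ is closed under extensions, it remains to show $M'\in\mathcal{F}^{\downarrow}(\Delta)$; but $M'$ is generated in the top by $L(\mu)$'s with $\mu\in\overline{\lambda'}$ maximal, so one peels off from its top a shifted direct summand of $P(\overline{\lambda'})$ — the Ext-vanishing against $\overline{\nabla}(\nu)$ for $\nu\prec\mu$ forces this quotient to be $\Delta(\mu)\langle j\rangle$, exactly dualizing the $K$-peeling in Lemma~\ref{lem6} — and repeats, the limit condition on $\mathfrak{b}$ again following from $M\in A^{\downarrow}\text{-}\mathrm{gmod}$ and uniform boundedness of standard modules supported at $\overline{\lambda'}$.

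The main obstacle is the very last point, and it is exactly the subtlety flagged in the remark before Lemma~\ref{lemlemnew}: unlike the $\overline{\nabla}$-case, a module in $\mathcal{F}^{\downarrow}(\Delta)$ need \emph{not} have a filtration literally of the form \eqref{eqfiltration} (Example~\ref{exm2}), so the peeling-off process for $M'$ produces at each stage a \emph{finite} $\Delta$-filtration of a bottom piece but may require infinitely many stages whose "bottoms" must be shown to escape to $+\infty$ in degree. One must therefore argue that after removing the top-degree standard summand the remaining module still lies in $A^{\downarrow}\text{-}\mathrm{gmod}$ with strictly increasing $\mathfrak{b}$ in the limit — which is where positivity of the grading, finite-dimensionality of $\overline{\nabla}(\mu)$ (Lemma~\ref{lem4}), and the finiteness of the standard filtrations of projectives (assumption on $A$) all get used together, precisely as in the closing paragraph of the proof of Lemma~\ref{lem6}.
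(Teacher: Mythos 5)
The skeleton of your argument matches the paper's: taking $\lambda'$ maximal, passing to the trace $M'$ of $P(\overline{\lambda'})$ in $M$, showing $M/M'$ is a $B_{\overline{\lambda'}}$-module that falls under the inductive hypothesis (via Lemma~\ref{lem3}), and then transferring the $\mathrm{ext}^1$-vanishing to $M'$ using Corollary~\ref{corlem5}\eqref{corlem5.1} applied to $M/M'$ — all of this is exactly what the paper does. The genuine gap is at the step you yourself flag as ``the main obstacle'': you try to produce a $\Delta$-filtration of $M'$ by peeling standard modules off the top, and you correctly notice that controlling $\lim\mathfrak{b}(\cdot)=+\infty$ there is delicate, since objects of $\mathcal{F}^{\downarrow}(\Delta)$ need not have filtrations literally of the form \eqref{eqfiltration}. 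The paper never attempts such a peeling. Instead it observes that because $\lambda'$ is maximal, $\Delta(\lambda)=P(\lambda)$ for every $\lambda\in\overline{\lambda'}$, so ``$M'$ has a standard filtration'' is simply ``$M'$ is projective'', and projectivity of an object in $A^{\downarrow}\text{-}\mathrm{gmod}$ is detected by the vanishing of $\mathrm{ext}^1$ into all shifted simples. That vanishing is obtained from $\mathrm{ext}^1_A(M',\overline{\nabla}(\mu)\langle j\rangle)=0$ by applying $\mathrm{hom}_A(M',{}_-)$ to the short exact sequence $L(\mu)\langle j\rangle\hookrightarrow \overline{\nabla}(\mu)\langle j\rangle\tto C$ and noting that $\mathrm{hom}_A(M',C)=0$: indeed $M'$ is generated by $e_{\overline{\lambda'}}M'$, whereas every composition factor of $C$ is some $L(\nu)$ with $\nu\prec\mu$, and $\nu\in\overline{\lambda'}$ together with $\nu\prec\mu$ would contradict the maximality of $\lambda'$. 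Once $M'$ is seen to be projective, $M'\in\mathcal{F}^{\downarrow}(\Delta)$ is immediate, and extension-closure finishes the proof. (The same observation also disposes of your base case $|\overline{\Lambda}|=1$ cleanly, with no peeling needed.)

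One smaller issue: your opening attempt to upgrade $\mathrm{ext}^1_A(M,\overline{\nabla}(\mu)\langle j\rangle)=0$ to $\mathrm{ext}^i_A(M,\overline{\nabla}(\mu)\langle j\rangle)=0$ for all $i>0$ by the ``same dimension shift as in Lemma~\ref{lem6}'' does not go through as stated: that dimension shift resolves the \emph{first} variable $\Delta(\lambda)$ using $\mathrm{Ker}\hookrightarrow P(\lambda)\tto\Delta(\lambda)$, whereas here you would need to coresolve the \emph{second} variable $\overline{\nabla}(\mu)$ by a sequence with $\overline{\nabla}$-filtered cokernel — which is close to circular at this point of the development. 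Happily, the paper never needs such an upgrade for $M$ itself: the higher ext-vanishing that enters is for $M/M'$, and it comes from the inductive hypothesis together with Corollary~\ref{corlem5}\eqref{corlem5.1}, not from a direct bootstrap on $M$.
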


\begin{proof}
Let $M\in A^{\downarrow}\text{-}\mathrm{gmod}$ be such that 
$\mathrm{ext}^{1}_A(M,\overline{\nabla}(\mu)\langle j\rangle)=0$
for all $\mu$ and $j$. We again proceed by induction on 
$|\overline{\Lambda}|$. If $|\overline{\Lambda}|=1$, then 
proper costandard modules are simple and hence $M$ is projective.
All indecomposable projective modules belong
to $\mathcal{F}^{\downarrow}(\Delta)$ as $A$ is standardly stratified.
Using this it is easy to check that all projective modules in
$A^{\downarrow}\text{-}\mathrm{gmod}$ belong to $\mathcal{F}^{\downarrow}(\Delta)$. So, in the case 
$|\overline{\Lambda}|=1$ the claim of the lemma is true.

If $|\overline{\Lambda}|>1$, we take some maximal $\nu\in\Lambda$
and denote by $N$ the sum of all images of all possible homomorphisms from
$\Delta(\lambda)\langle j\rangle$, where $\lambda\in\overline{\nu}$ and 
$j\in\mathbb{Z}$, to $M$. Then we have a short exact sequence
\begin{equation}\label{eqnew5}
N\hookrightarrow M\tto \mathrm{Coker}.
\end{equation}
Compare with \eqref{eq4} in the proof of Lemma~\ref{lem6}.
Using arguments similar to those in the latter proof, one shows
that $\mathrm{ext}^1_A(\mathrm{Coker},\overline{\nabla}(\mu)\langle 
j\rangle)=0$ for all $\mu\in \Lambda\setminus\overline{\nu}$ and all
$j$. By construction we have that $\mathrm{Coker}$ is in fact a
$B_{\overline{\nu}}\,$-module. Therefore, 
using Lemma~\ref{lem3} and the inductive assumption 
we get $\mathrm{Coker}\in\mathcal{F}^{\downarrow}(\Delta)$.
From Corollary~\ref{corlem5}\eqref{corlem5.1} we thus get
\begin{equation}\label{eqnn5}
\mathrm{ext}^i_A(\mathrm{Coker},\overline{\nabla}(\mu)
\langle j\rangle)=0 
\end{equation}
for all $\mu\in \Lambda$, $j\in\mathbb{Z}$ and $i\in\mathbb{N}$.

Furthermore, for any $\mu$ and $j$ we also 
have the following part of the 
long exact  sequence associated with \eqref{eqnew5}:
\begin{displaymath}
\mathrm{ext}^1_A(M,\overline{\nabla}(\mu)\langle 
j\rangle) \to
\mathrm{ext}^1_A(N,\overline{\nabla}(\mu)\langle 
j\rangle) \to
\mathrm{ext}^2_A(\mathrm{Coker},\overline{\nabla}(\mu)\langle 
j\rangle).
\end{displaymath}
The left term is zero by our assumptions and the right term is zero
by \eqref{eqnn5}. Therefore for all $\mu$ and $j$ we have
\begin{equation}\label{eq567}
\mathrm{ext}^1_A(N,\overline{\nabla}(\mu)\langle j\rangle)=0.
\end{equation}
Fix now $\mu\in\Lambda$ and $j\in\mathbb{Z}$ and denote by 
$C$ the cokernel of the natural inclusion 
$L(\mu)\langle j\rangle\hookrightarrow 
\overline{\nabla}(\mu)\langle j\rangle$.
Applying $\mathrm{hom}_A(N,{}_-)$ to the short exact sequence
\begin{displaymath}
L(\mu)\langle j\rangle\hookrightarrow
\overline{\nabla}(\mu)\langle j\rangle\tto C, 
\end{displaymath}
and using \eqref{eq567} and the fact 
that $\mathrm{hom}_A(N,C)=0$ by construction, we obtain that 
$\mathrm{ext}^1_A(N,L(\mu)\langle j\rangle)=0$
for any $\mu$ and $j$. This yields that $N$ is projective and thus
belongs to $\mathcal{F}^{\downarrow}(\Delta)$.   Since 
$\mathcal{F}^{\downarrow}(\Delta)$ is closed under extensions,
the claim of the lemma follows.
\end{proof}

\begin{proof}[Proof of Theorem~\ref{thm2}.]
Let 
{\small
\begin{displaymath}
\begin{array}{rcl} 
\mathcal{X}&=&
\{M\in A^{\downarrow}\text{-}\mathrm{gmod}\,:\,
\mathrm{ext}^i_A(M,\overline{\nabla}(\lambda)\langle j\rangle)=0,
\forall j\in\mathbb{Z},i>0,\lambda\in\Lambda\};\\
\mathcal{Y}&=&
\{M\in A^{\downarrow}\text{-}\mathrm{gmod}\,:\,
\mathrm{ext}^1_A(M,\overline{\nabla}(\lambda)\langle j\rangle)=0,
\forall j\in\mathbb{Z},\lambda\in\Lambda\}.
\end{array}
\end{displaymath}
}
The inclusion $\mathcal{X}\subseteq \mathcal{Y}$ is obvious. The inclusion
$\mathcal{Y}\subseteq \mathcal{F}^{\downarrow}(\Delta)$ follows from
Lemma~\ref{lem7}. The inclusion
$\mathcal{F}^{\downarrow}(\Delta)\subseteq \mathcal{X}$ 
follows from Corollary~\ref{corlem5}\eqref{corlem5.1}.
This proves Theorem~\ref{thm2}\eqref{thm2-1}.
Theorem~\ref{thm2}\eqref{thm2-2} is proved similarly
using Lemma~\ref{lem6} instead of Lemma~\ref{lem7}
and Corollary~\ref{corlem5}\eqref{corlem5.2} instead of 
Corollary~\ref{corlem5}\eqref{corlem5.1}.
\end{proof}

\begin{corollary}\label{cor11}
Let $A$ be a positively graded standardly stratified algebra.
\begin{enumerate}[(i)]
\item \label{cor11-1}
For every $M\in \mathcal{F}^{\downarrow}(\Delta)$,
$\lambda\in\Lambda$ and $j\in\mathbb{Z}$ the multiplicity
of $\Delta(\lambda)\langle j\rangle$ in any standard filtration of 
$M$ is well-defined, finite and equals
$\dim\mathrm{hom}_A(M,\overline{\nabla}(\lambda)\langle j\rangle)$.
\item \label{cor11-2}
For every $M\in \mathcal{F}^{\downarrow}(\overline{\nabla})$,
$\lambda\in\Lambda$ and $j\in\mathbb{Z}$ the multiplicity
of $\overline{\nabla}(\lambda)\langle j\rangle$ in any proper 
costandard filtration of  $M$ is well-defined, finite and equals
$\dim\mathrm{hom}_A(\Delta(\lambda)\langle j\rangle,M)$.
\end{enumerate}
\end{corollary}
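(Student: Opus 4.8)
The plan is to derive Corollary~\ref{cor11} from Theorem~\ref{thm2} and Lemma~\ref{lem5} by the standard argument that computes filtration multiplicities homologically, being careful that the filtrations involved may be infinite. I treat statement~\eqref{cor11-1}; statement~\eqref{cor11-2} is dual, using Lemma~\ref{lemlemnew} to reduce to filtrations of the form \eqref{eqfiltration} and Corollary~\ref{corlem5}\eqref{corlem5.2}.

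First I would fix $M\in\mathcal{F}^{\downarrow}(\Delta)$ and a filtration witnessing this. Since $M$ has a \emph{finite} filtration with subquotients in $\mathcal{C}(\Delta)$, and the functor $\mathrm{hom}_A({}_-,\overline{\nabla}(\lambda)\langle j\rangle)$ is left exact while all higher extensions from $\mathcal{C}(\Delta)$-modules into proper costandard modules vanish by Corollary~\ref{corlem5}\eqref{corlem5.1}, the long exact sequences collapse and give additivity: $\dim\mathrm{hom}_A(M,\overline{\nabla}(\lambda)\langle j\rangle)$ equals the sum of the corresponding dimensions over the finitely many subquotients. So it suffices to treat a single $N\in\mathcal{C}(\Delta)$ with a (possibly infinite) filtration as in \eqref{eqfiltration}. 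Here I would use that $\overline{\nabla}(\lambda)\langle j\rangle$ is finite dimensional (Lemma~\ref{lem4}) and that $\lim_{i}\mathfrak{b}(N^{(i)})=+\infty$: choose $n$ so large that $\mathrm{hom}_A(N^{(n)},\overline{\nabla}(\lambda)\langle j\rangle)=0$ for degree reasons (as in the proof of Corollary~\ref{corlem5}), and also $\mathrm{ext}^1_A(N^{(n)},\overline{\nabla}(\lambda)\langle j\rangle)=0$ since $N^{(n)}\in\mathcal{C}(\Delta)\subseteq\mathcal{F}^{\downarrow}(\Delta)$. Then the short exact sequence $N^{(n)}\hookrightarrow N\tto N/N^{(n)}$ yields $\mathrm{hom}_A(N,\overline{\nabla}(\lambda)\langle j\rangle)\cong\mathrm{hom}_A(N/N^{(n)},\overline{\nabla}(\lambda)\langle j\rangle)$, reducing everything to the \emph{finite} filtration of $N/N^{(n)}$ by standard modules.

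For the finite standard filtration of $N/N^{(n)}$, I would again use Corollary~\ref{corlem5}\eqref{corlem5.1} to collapse the long exact sequences, so $\dim\mathrm{hom}_A(N/N^{(n)},\overline{\nabla}(\lambda)\langle j\rangle)$ is the sum over the subquotients $\Delta(\mu)\langle k\rangle$, and by Lemma~\ref{lem5} each such subquotient contributes $1$ precisely when $\mu=\lambda$ and $k=j$, and $0$ otherwise. Hence $\dim\mathrm{hom}_A(M,\overline{\nabla}(\lambda)\langle j\rangle)$ counts exactly the occurrences of $\Delta(\lambda)\langle j\rangle$ among all subquotients of the chosen filtration; since the left-hand side does not depend on the filtration, the multiplicity is well-defined, and it is finite because $\mathrm{hom}_A(M,\overline{\nabla}(\lambda)\langle j\rangle)$ is finite dimensional (both modules lie in locally finite categories and $\overline{\nabla}(\lambda)\langle j\rangle$ is finite dimensional).

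The main subtlety — not a deep obstacle, but the point demanding care — is the interchange of $\mathrm{hom}_A(M,{}_-)$ with the possibly infinite filtration \eqref{eqfiltration}: one cannot simply "sum over all subquotients" termwise. The device that makes it work is exactly the degree truncation argument above, exploiting $\mathfrak{b}(N^{(i)})\to+\infty$ together with finite dimensionality of proper costandard modules to discard all but finitely many steps of the filtration. Once this reduction to a finite filtration is in place, the rest is the classical homological multiplicity count.
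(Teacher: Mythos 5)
Your proposal is correct and follows the same route the paper intends: Lemma~\ref{lem5} supplies the Ext-orthogonality, Corollary~\ref{corlem5} collapses the long exact sequences so that $\dim\mathrm{hom}_A({}-{},\overline{\nabla}(\lambda)\langle j\rangle)$ is additive on standard filtrations, and the degree-truncation trick (using $\mathfrak{b}(N^{(i)})\to+\infty$ together with the finite dimensionality of $\overline{\nabla}(\lambda)\langle j\rangle$ from Lemma~\ref{lem4}) reduces the possibly infinite filtration to a finite one. You have simply made explicit the adaptation to infinite filtrations that the paper leaves implicit under ``standard arguments (see e.g.~\cite{Ri}).''
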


\begin{proof}
Follows from Lemma~\ref{lem5} by standard arguments (see e.g.  \cite{Ri}).
\end{proof}

\begin{remark}\label{rem12}
{\rm
Note that the ungraded multiplicity of $\Delta(\lambda)$
(or $\overline{\nabla}(\lambda)$) in $M$ might be infinite.
}
\end{remark}

Let $\mathcal{F}^{\uparrow}(\overline{\nabla})$ denote the full subcategory 
of the category $A^{\uparrow}\text{-}\mathrm{gmod}$, which consists
of all modules $M$ admitting a (possibly infinite) filtration 
\begin{equation}\label{eq312}
0=M^{(0)}\subseteq M^{(1)}\subseteq M^{(2)}\subseteq\dots 
\end{equation}
such that $\displaystyle M=\bigcup_{i\geq 0}M^{(i)}$ and for every 
$i=0,1,\dots$ the subquotient  $M^{(i+1)}/M^{(i)}$ is isomorphic 
(up to shift) to some proper costandard module. Since all
proper costandard modules are finite dimensional (Lemma~\ref{lem4})
from the dual version of Lemma~\ref{lemlemnew} one obtains that
$\mathcal{F}^{\uparrow}(\overline{\nabla})$ is closed under finite 
extensions.

\begin{theorem}\label{thm2new}
We have
{\small
\begin{displaymath}
\begin{array}{rcl} 
\mathcal{F}^{\uparrow}(\overline{\nabla})&=&
\{M\in A^{\uparrow}\text{-}\mathrm{gmod}\,:\,
\mathrm{ext}^i_A(\Delta(\lambda)\langle j\rangle,M)=0,
\forall j\in\mathbb{Z},i>0,\lambda\in\Lambda\}\\
&=&
\{M\in A^{\uparrow}\text{-}\mathrm{gmod}\,:\,
\mathrm{ext}^1_A(\Delta(\lambda)\langle j\rangle,M)=0,
\forall j\in\mathbb{Z},\lambda\in\Lambda\}.
\end{array}
\end{displaymath}
}
\end{theorem}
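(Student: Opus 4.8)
The plan is to reduce Theorem~\ref{thm2new} to Theorem~\ref{thm2}\eqref{thm2-2} via the graded duality $\circledast$, after an intermediate step identifying $\mathcal{F}^{\uparrow}(\overline{\nabla})$ with a subcategory of modules for the opposite algebra. First I would set up notation: let $A^{\mathrm{op}}$ denote the opposite algebra with the opposite preorder $\preceq^{\mathrm{op}}$, which is again a positively graded standardly stratified algebra (this uses the symmetry of the standard-stratification condition, or can be checked directly). The graded duality $\circledast$ exchanges $A^{\uparrow}\text{-}\mathrm{gmod}$ and $A^{\mathrm{op},\downarrow}\text{-}\mathrm{gmod}$, sends injectives to projectives, and converts the proper costandard modules $\overline{\nabla}_A(\lambda)$ into the proper standard modules $\overline{\Delta}_{A^{\mathrm{op}}}(\lambda)$, while converting standard modules $\Delta_A(\lambda)$ into costandard modules $\nabla_{A^{\mathrm{op}}}(\lambda)$. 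Thus $\circledast$ carries $\mathcal{F}^{\uparrow}(\overline{\nabla})$ for $A$ to the category of modules in $A^{\mathrm{op},\downarrow}\text{-}\mathrm{gmod}$ admitting a filtration of the form \eqref{eqfiltration} with proper standard subquotients; call this category $\mathcal{G}$.

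The key step is then to prove the ``opposite'' analogue of Theorem~\ref{thm2}\eqref{thm2-2}: namely that for a positively graded standardly stratified algebra $B$, the category $\mathcal{G}$ of modules in $B^{\downarrow}\text{-}\mathrm{gmod}$ with a filtration of the form \eqref{eqfiltration} by proper standard modules coincides with $\{M : \mathrm{ext}^i_B(M, \nabla(\lambda)\langle j\rangle)=0, \forall i>0\}$ and with the $\mathrm{ext}^1$-version. Here I would mirror the whole development of Lemmata~\ref{lem3}--\ref{lem7}: the induction-on-$|\overline{\Lambda}|$ machinery (with $B_{\overline{\lambda}}$), the orthogonality $\mathrm{ext}^i_B(\overline{\Delta}(\lambda), \nabla(\mu)\langle j\rangle) = \delta$-type statement dual to Lemma~\ref{lem5}, the finiteness statement dual to Lemma~\ref{lem4} (costandard modules for $B$ are in $B^{\uparrow}\text{-}\mathrm{gmod}$, equivalently proper standard modules are bounded above — which is exactly the hypothesis giving membership in $B^{\downarrow}$), and finally the analogues of Lemmata~\ref{lem6} and \ref{lem7}. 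Most of this is genuinely dual to what has already been proved; the one place requiring a little care is the finiteness input, since Lemma~\ref{lem4} was proved using that standard filtrations of projectives are finite — its dual needs that costandard modules are finite dimensional, which follows from Lemma~\ref{lem4} applied to $A$ together with the $\circledast$-duality, so no new hypothesis is needed.

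Alternatively — and this is probably the cleaner route to actually write — I would avoid re-running the induction and instead directly relate $\mathcal{F}^{\uparrow}(\overline{\nabla})$ to $\mathcal{F}^{\downarrow}(\overline{\nabla})$. Every module in $\mathcal{F}^{\uparrow}(\overline{\nabla})$ is a union $\bigcup M^{(i)}$ with $M^{(i)}$ having a \emph{finite} proper costandard filtration, hence $M^{(i)} \in \mathcal{F}^{b}(\overline{\nabla}) \subseteq \mathcal{F}^{\downarrow}(\overline{\nabla})$. For such $M^{(i)}$, Corollary~\ref{corlem5}\eqref{corlem5.2} gives $\mathrm{ext}^k_A(\Delta(\lambda)\langle j\rangle, M^{(i)})=0$ for all $k>0$. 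Since $\Delta(\lambda)\langle j\rangle$ admits a finite projective resolution by finitely generated projectives (hypothesis on finite standard filtrations of projectives — the same fact used in the proof of Corollary~\ref{corlem5}), $\mathrm{hom}_A(\mathcal{P}^{\bullet}, {}_-)$ and $\mathrm{ext}^k_A(\Delta(\lambda)\langle j\rangle, {}_-)$ commute with the directed colimit $M = \varinjlim M^{(i)}$; hence $\mathrm{ext}^k_A(\Delta(\lambda)\langle j\rangle, M)=0$ for all $k>0$, giving the inclusion $\mathcal{F}^{\uparrow}(\overline{\nabla}) \subseteq \{M : \mathrm{ext}^i = 0\} \subseteq \{M : \mathrm{ext}^1 = 0\}$. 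For the reverse inclusion, suppose $M \in A^{\uparrow}\text{-}\mathrm{gmod}$ has $\mathrm{ext}^1_A(\Delta(\lambda)\langle j\rangle, M)=0$ for all $\lambda, j$; writing $M = \bigcup M^{(i)}$ for the obvious exhaustion by the submodules generated in degrees $\leq n$ (each finite dimensional, in $A^{\downarrow}\text{-}\mathrm{gmod}\cap A^{\uparrow}\text{-}\mathrm{gmod}$), I would build a proper costandard filtration inductively, peeling off one $\overline{\nabla}(\mu)\langle j'\rangle$ at a time exactly as in the proof of Lemma~\ref{lem6} but growing upward; the vanishing of $\mathrm{ext}^1$ against $M$ transfers to the relevant subquotients, and the bound $\mathfrak{b}$ of the layers goes to $+\infty$ by finite-dimensionality of proper costandard modules and the fact that $M$ is locally finite and bounded above.

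The main obstacle I anticipate is the reverse inclusion in this last paragraph: constructing the \emph{infinite} increasing filtration and verifying it is exhaustive. The subtlety is that, unlike in Lemma~\ref{lem6} where one worked downward inside $A^{\downarrow}$ and the $\mathfrak{b}\to+\infty$ condition made the intersection automatically zero, here one must show the \emph{union} of the constructed submodules is all of $M$. This is where the hypothesis $M \in A^{\uparrow}\text{-}\mathrm{gmod}$ (bounded above, locally finite) is essential: one can argue that at each stage the top nonzero degree not yet covered strictly decreases, or more robustly, that the socle of the cokernel at each stage forces the filtration to reach every composition factor; since $M$ has finitely many composition factors in each degree and is bounded above, cofinality follows. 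I would phrase the exhaustion argument carefully and, where it is genuinely parallel, simply cite ``dual to the proof of Lemma~\ref{lem6}'' rather than repeating it.
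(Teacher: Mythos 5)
Your forward inclusion (Route~2, $\mathcal{F}^{\uparrow}(\overline{\nabla})\subseteq\mathcal{Y}$) is fine and is a legitimate alternative to the paper's argument: you use that $\Delta(\lambda)\langle j\rangle$ admits a finite projective resolution by finitely generated projectives (a consequence of the finiteness of standard filtrations of projectives), so that $\mathrm{ext}^k_A(\Delta(\lambda)\langle j\rangle,-)$ commutes with the filtered colimit $M=\varinjlim M^{(i)}$; the paper instead chooses $k$ large enough so that $M/M^{(k)}$ sits in degrees invisible to that resolution. Both are valid. The problems lie elsewhere.

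Route~1 rests on the assertion that $A^{\mathrm{op}}$, with $\preceq^{\mathrm{op}}$, is again standardly stratified in the paper's sense, attributed to ``the symmetry of the standard-stratification condition.'' This is false here. The paper's definition is that kernels of $P(\lambda)\tto\Delta(\lambda)$ admit \emph{finite} standard filtrations, and this is not self-dual: the algebra $B$ of Example~\ref{exm2} is standardly stratified (with $1\prec 2$), yet $P_{B^{\mathrm{op}}}(2)$ with $\preceq^{\mathrm{op}}$ has socle containing infinitely many shifted copies of $L(1)$, forcing an infinite standard filtration, so $B^{\mathrm{op}}$ is not standardly stratified for $\preceq^{\mathrm{op}}$. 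Corollary~\ref{cor2new} only gives \emph{possibly infinite} proper costandard filtrations of injectives, so the transfer you need is not available. Moreover, even granting it, dualising Theorem~\ref{thm2} lands you on the $(\overline{\Delta},\nabla)$-pair or the $(\overline{\nabla},\Delta)$-pair depending on which preorder you use, and you would still need a fresh development of ``the other half'' of the orthogonality theory; so Route~1 is not really a reduction.

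The genuine gap is in the reverse inclusion of Route~2. Saying you would peel off ``one $\overline{\nabla}(\mu)\langle j'\rangle$ at a time exactly as in the proof of Lemma~\ref{lem6} but growing upward'' overlooks that Lemma~\ref{lem6}'s construction lives in $A^{\downarrow}\text{-}\mathrm{gmod}$ with a \emph{decreasing} filtration and convergence via $\mathfrak{b}\to+\infty$; neither transfers formally. The paper's actual argument is structurally different and has two ingredients you do not mention. First, an induction on $|\overline{\Lambda}|$: for maximal $\mu$, one pulls out the maximal submodule $N$ of $M$ with no composition factor from $\overline{\mu}$, shows $N\in\mathcal{X}$, applies the inductive hypothesis to $B_{\overline{\mu}}$ (via Lemma~\ref{lem3}), and then works with $\mathrm{Coker}=M/N$, whose socle is concentrated in stratum $\overline{\mu}$. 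Second, and crucially, for $\mathrm{Coker}$ with top nonzero degree $k$ one takes $V$ to be the intersection of the kernels of all maps $\mathrm{Coker}\to I(\nu)\langle j\rangle$ with $\nu\in\overline{\mu}$ and $-j<k$; one shows $V$ is finite dimensional, lies in $A^{\downarrow}\text{-}\mathrm{gmod}$, and is in $\mathcal{X}$, and then invokes Theorem~\ref{thm2}\eqref{thm2-2} to conclude that $V$ has a \emph{finite} proper costandard filtration. Only after this finite block is stripped off does the top degree strictly drop (so the exhaustion argument you sketch goes through). Peeling off single proper costandard subquotients does not by itself decrease the top degree at each step, and you have no mechanism to identify a proper costandard submodule of $\mathrm{Coker}$ directly. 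The passage from $A^{\uparrow}$ to a finite-dimensional piece in $A^{\downarrow}$, so that Theorem~\ref{thm2}\eqref{thm2-2} can be applied, is the key idea of the proof and it is missing from your sketch.
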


\begin{proof}
Set
\begin{gather*}
\mathcal{X}=\{M\in A^{\uparrow}\text{-}\mathrm{gmod}\,:\,
\mathrm{ext}^1_A(\Delta(\lambda)\langle j\rangle,M)=0,
\forall j\in\mathbb{Z},\lambda\in\Lambda\},\\
\mathcal{Y}=\{M\in A^{\uparrow}\text{-}\mathrm{gmod}\,:\,
\mathrm{ext}^i_A(\Delta(\lambda)\langle j\rangle,M)=0,
\forall j\in\mathbb{Z},i>0,\lambda\in\Lambda\}.
\end{gather*}
Obviously, $\mathcal{Y}\subseteq \mathcal{X}$.

Let $M\in \mathcal{F}^{\uparrow}(\overline{\nabla})$,
$\lambda\in\Lambda$ and $j\in\mathbb{Z}$. Assume that \eqref{eq312} 
gives a proper costandard filtration of $M$. As $M\in 
A^{\uparrow}\text{-}\mathrm{gmod}$ and $\Delta(\lambda)\in
A^{\downarrow}\text{-}\mathrm{gmod}$, it follows that there exists 
$k\in\mathbb{N}$ such that 
\begin{displaymath}
\mathrm{ext}^i_A(\Delta(\lambda)\langle j\rangle,M/M^{(k)})=0 
\end{displaymath}
for all $i\geq 0$. At the same time we have 
\begin{displaymath}
\mathrm{ext}^i_A(\Delta(\lambda)\langle j\rangle,M^{(k)})=0 
\end{displaymath}
for all $i>0$ by Lemma~\ref{lem5}. Hence 
\begin{displaymath}
\mathrm{ext}^i_A(\Delta(\lambda)\langle j\rangle,M)=0 
\end{displaymath}
for all $i>0$ and thus 
$\mathcal{F}^{\uparrow}(\overline{\nabla})\subseteq \mathcal{Y}$.

It is left to show that $\mathcal{X}\subseteq 
\mathcal{F}^{\uparrow}(\overline{\nabla})$. We will do this by induction
on $|\overline{\Lambda}|$. If $|\overline{\Lambda}|=1$, then all
proper standard modules are simple, which yields
$\mathcal{F}^{\uparrow}(\overline{\nabla})=
A^{\uparrow}\text{-}\mathrm{gmod}$. In this case
the inclusion $\mathcal{X}\subseteq 
\mathcal{F}^{\uparrow}(\overline{\nabla})$ is obvious.

If $|\overline{\Lambda}|>1$ we fix some maximal $\mu\in\Lambda$.
Let $M\in \mathcal{X}$. Denote by $N$ the maximal submodule of
$M$ satisfying $[N:L(\nu)\langle j\rangle]=0$ for all
$\nu\in\overline{\mu}$ and $j\in\mathbb{Z}$. For
$\lambda\in\Lambda$ and $j\in\mathbb{Z}$, applying the functor  
$\mathrm{hom}_A(\Delta(\lambda)\langle j\rangle,{}_-)$ to 
the short exact sequence
\begin{displaymath}
N\hookrightarrow M\tto \mathrm{Coker},
\end{displaymath}
and using $M\in \mathcal{X}$, gives the following exact sequences:
\begin{equation}\label{eqn2-2}
\mathrm{hom}_A(\Delta(\lambda)\langle j\rangle,\mathrm{Coker})
\to \mathrm{ext}^1_A(\Delta(\lambda)\langle j\rangle,N)
\to 0
\end{equation}
and
\begin{equation}\label{eqn3-2}
0\to \mathrm{ext}^1_A(\Delta(\lambda)\langle j\rangle,\mathrm{Coker})
\to \mathrm{ext}^2_A(\Delta(\lambda)\langle j\rangle,N).
\end{equation}
By construction, any simple subquotient in the socle of 
$\mathrm{Coker}$ has the form $L(\nu)\langle j\rangle$
for some $\nu\in\overline{\mu}$ and $j\in\mathbb{Z}$.
Therefore, since $\mu$ is maximal, in the case 
$\lambda\not\in\overline{\mu}$ we have 
$\mathrm{hom}_A(\Delta(\lambda)\langle j\rangle,\mathrm{Coker})=0$
and hence $\mathrm{ext}^1_A(\Delta(\lambda)\langle j\rangle,N)=0$
from \eqref{eqn2-2}. For $\lambda\in\overline{\mu}$ the module
$\Delta(\lambda)\langle j\rangle$ is projective and hence
$\mathrm{ext}^1_A(\Delta(\lambda)\langle j\rangle,N)=0$ as well.
This implies $N\in \mathcal{X}$. As, by construction,
$N\in B_{\overline{\mu}}\,\text{-}\mathrm{mod}$,
using Lemma~\ref{lem3} and the inductive assumption
we obtain $N\in \mathcal{F}^{\uparrow}(\overline{\nabla})$.
As the inclusion $\mathcal{F}^{\uparrow}(\overline{\nabla})\subseteq
\mathcal{Y}$ is already proved, we have $N\in \mathcal{Y}$ and 
from \eqref{eqn3-2} it follows that $\mathrm{Coker}\in\mathcal{X}$.

Since $\mathcal{F}^{\uparrow}(\overline{\nabla})$ is closed under finite
extensions, it is left to show that $\mathrm{Coker}\in
\mathcal{F}^{\uparrow}(\overline{\nabla})$. If
$\mathrm{Coker}=0$, we have nothing to do.
If $\mathrm{Coker}\neq 0$, we choose maximal $k\in \mathbb{Z}$ such 
that $\mathrm{Coker}_k\neq 0$. Denote by $V$ the intersection
of the kernels of all possible maps from $\mathrm{Coker}$ to 
$I(\nu)\langle j\rangle$, where 
$\nu\in\overline{\mu}$ and $-j<k$, and consider the corresponding
short exact sequence
\begin{equation}\label{eqn1}
V\hookrightarrow  \mathrm{Coker}\tto \mathrm{Coker}'.
\end{equation}
From the construction it follows that the socle of $V$ is $V_k$
and that for any $j<k$ every composition subquotient of 
$V_j$ has the form $L(\nu)\langle -j\rangle$ for some
$\nu\not\in\overline{\mu}$. Therefore, taking the injective envelope
of $V$ and using the definition of proper standard modules, we
obtain that $V$ is a submodule of a finite direct sum of proper standard
modules (such that the socles of $V$ and of this direct sum 
agree). In particular, $V$ is finite dimensional
as both $V_k$ and all proper standard modules are (Lemma~\ref{lem4}). 
Hence $V\in A^{\downarrow}\text{-}\mathrm{gmod}$. 

For $\lambda\in \Lambda$ and $j\in \mathbb{Z}$, applying the functor  
$\mathrm{hom}_A(\Delta(\lambda)\langle j\rangle,{}_-)$ to \eqref{eqn1}
and using $\mathrm{Coker}\in \mathcal{X}$ gives the following exact sequences:
\begin{equation}\label{eqn2}
\mathrm{hom}_A(\Delta(\lambda)\langle j\rangle,\mathrm{Coker}')
\to \mathrm{ext}^1_A(\Delta(\lambda)\langle j\rangle,V)
\to 0
\end{equation}
and
\begin{equation}\label{eqn3}
0\to \mathrm{ext}^1_A(\Delta(\lambda)\langle j\rangle,\mathrm{Coker}')
\to \mathrm{ext}^2_A(\Delta(\lambda)\langle j\rangle,V).
\end{equation}
If $\lambda\not\in\overline{\mu}$, then, by the definition of the module
$\mathrm{Coker}'$, we have 
$\mathrm{hom}_A(\Delta(\lambda)\langle j\rangle,\mathrm{Coker}')=0$
and hence $\mathrm{ext}^1_A(\Delta(\lambda)\langle j\rangle,V)=0$
from \eqref{eqn2}.
If $\lambda\in\overline{\mu}$, then
$\Delta(\lambda)\langle j\rangle$ is projective by the maximality of $\mu$
and $\mathrm{ext}^1_A(\Delta(\lambda)\langle j\rangle,V)=0$ automatically.
Hence $V\in\mathcal{X}$. Since $V\in A^{\downarrow}\text{-}\mathrm{gmod}$
as shown above, from Theorem~\ref{thm2}\eqref{thm2-2} we deduce that
$V$ has a (finite) proper standard filtration and thus
$V\in \mathcal{F}^{\uparrow}(\overline{\nabla})$.
Using the already proved inclusion 
$\mathcal{F}^{\uparrow}(\overline{\nabla})\subseteq \mathcal{Y}$ and
\eqref{eqn3} we also get $\mathrm{Coker}'\in\mathcal{X}$.
Note that $\mathrm{Coker}'_k=0$ by construction.

Applying now the same arguments to $\mathrm{Coker}'$ and proceeding
inductively (decreasing $k$) we construct a (possibly infinite)
proper costandard filtration of $\mathrm{Coker}'$
of the form \eqref{eq312}. This claim of the theorem follows.
\end{proof}

The following claim is a weak version of \cite[Lemma~2.1]{Dl} and 
\cite[Theorem~1]{Fr}. The original statement also contains the 
converse assertion that the fact that indecomposable injective 
$A$-modules belong to $\mathcal{F}^{\uparrow}(\overline{\nabla})$ 
guarantees that $A$ is standardly stratified.

\begin{corollary}[Weak Dlab's theorem]\label{cor2new}
All indecomposable injective $A$-modules belong to
$\mathcal{F}^{\uparrow}(\overline{\nabla})$.
\end{corollary}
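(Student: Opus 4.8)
The plan is to verify the Ext-vanishing criterion of Theorem~\ref{thm2new} for each indecomposable injective $I(\lambda)$, which immediately places $I(\lambda)$ in $\mathcal{F}^{\uparrow}(\overline{\nabla})$. Since $I(\lambda)\in A^{\uparrow}\text{-}\mathrm{gmod}$, it suffices to show that $\mathrm{ext}^1_A(\Delta(\mu)\langle j\rangle, I(\lambda))=0$ for all $\mu\in\Lambda$ and all $j\in\mathbb{Z}$. But this is automatic: $I(\lambda)$ is injective in $A$-$\mathrm{gmod}$, hence $\mathrm{ext}^i_A(X, I(\lambda))=0$ for every module $X$ and every $i>0$, in particular for $X=\Delta(\mu)\langle j\rangle$. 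Therefore $I(\lambda)$ lies in the set $\mathcal{X}$ appearing in the proof of Theorem~\ref{thm2new}, and by that theorem $I(\lambda)\in\mathcal{F}^{\uparrow}(\overline{\nabla})$.

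The only subtlety to address — and the one genuine step — is that Theorem~\ref{thm2new} is stated for modules in $A^{\uparrow}\text{-}\mathrm{gmod}$, so one must be sure $I(\lambda)$ genuinely lies in that subcategory; this is recorded already in Section~\ref{s2}, where it is noted that $I(\lambda)\in A^{\uparrow}\text{-}\mathrm{gmod}$ by construction (it is the injective envelope of the simple module $L(\lambda)$, which lies in $A^{\uparrow}\text{-}\mathrm{gmod}$, and injective envelopes exist in that category). With that in hand there is nothing further to check.

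I do not anticipate any real obstacle here: the corollary is a direct specialization of Theorem~\ref{thm2new} to injective modules, exploiting the trivial vanishing of higher Ext groups into an injective object. The substantive content has already been carried out in the proof of Theorem~\ref{thm2new}; this corollary merely names a consequence. One could alternatively phrase the argument via the graded duality $\circledast$ together with Lemma~\ref{lem7} and projectivity of $\Delta$-filtered covers, but the direct route through Theorem~\ref{thm2new} is cleanest and requires no additional lemmata.
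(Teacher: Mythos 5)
Your proposal is correct and matches the paper's own proof: both simply note that higher ext groups into an injective vanish, check $I(\lambda)\in A^{\uparrow}\text{-}\mathrm{gmod}$, and invoke Theorem~\ref{thm2new}. No substantive difference.
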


\begin{proof}
If $I$ in an indecomposable injective  $A$-module, then we obviously have
$\mathrm{ext}^i_A(\Delta(\lambda)\langle j\rangle,I)=0$ for all
$j\in\mathbb{Z}$, $i>0$ and $\lambda\in\Lambda$, so the claim follows
from Theorem~\ref{thm2new}.
\end{proof}

The following statement generalizes the corresponding results of
\cite{Ri,AHLU,Fr}:

\begin{theorem}[Construction of tilting modules]
\label{thm8} 
Let $A$ be a positively graded standardly stratified algebra.
\begin{enumerate}[(i)]
\item \label{thm8-1} The category  
$\mathcal{F}^{\downarrow}(\Delta)\cap 
\mathcal{F}^{\downarrow}(\overline{\nabla})$ is closed with
respect to taking direct sums and direct summands.
\item \label{thm8-2} For every $\lambda\in\Lambda$ there is
a unique indecomposable object $T(\lambda)\in 
\mathcal{F}^{\downarrow}(\Delta)\cap 
\mathcal{F}^{\downarrow}(\overline{\nabla})$ such that there is 
a short exact sequence
\begin{displaymath}
\Delta(\lambda)\hookrightarrow  T(\lambda)\tto
\mathrm{Coker},
\end{displaymath}
with $\mathrm{Coker}\in \mathcal{F}^{\downarrow}(\Delta)$. 
\item \label{thm8-3} 
Every indecomposable object in $\mathcal{F}^{\downarrow}(\Delta)\cap 
\mathcal{F}^{\downarrow}(\overline{\nabla})$ has the form
$T(\lambda)\langle j\rangle$ for some $\lambda\in \Lambda$ and
$j\in\mathbb{Z}$.
\end{enumerate}
\end{theorem}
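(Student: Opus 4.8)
The plan is to follow the classical Ringel construction of tilting modules, adapted to the graded infinite-dimensional setting, using Theorem~\ref{thm2} and Theorem~\ref{thm2new} as the main tools. For part~\eqref{thm8-1}, I would argue that $\mathcal{F}^{\downarrow}(\Delta)$ is closed under direct sums and summands using the Ext-characterization of Theorem~\ref{thm2}\eqref{thm2-1}: the vanishing of $\mathrm{ext}^1_A(M,\overline{\nabla}(\lambda)\langle j\rangle)$ is clearly inherited by arbitrary direct sums (since $\overline{\nabla}(\lambda)\langle j\rangle$ is finite dimensional by Lemma~\ref{lem4}, a map from a direct sum factors through a finite subsum, or more simply $\mathrm{ext}^1$ of a direct sum is the product of the $\mathrm{ext}^1$'s) and by summands; the analogous statement for $\mathcal{F}^{\downarrow}(\overline{\nabla})$ is not literally given by Theorem~\ref{thm2}\eqref{thm2-2} since the latter only characterizes membership for objects already known to lie in $A^{\downarrow}\text{-}\mathrm{gmod}$ — but that restriction is harmless here since we intersect with $\mathcal{F}^{\downarrow}(\Delta)\subseteq A^{\downarrow}\text{-}\mathrm{gmod}$. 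So the intersection is closed under sums and summands.

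For part~\eqref{thm8-2}, I would construct $T(\lambda)$ by the usual iterative "universal extension" procedure. Start with $T_0 = \Delta(\lambda)$. Having built a module $T_k$ with a finite standard filtration whose top section is $\Delta(\lambda)$, and such that $\mathrm{ext}^1_A(\Delta(\mu)\langle j\rangle, T_k)$ can be nonzero only for finitely many pairs $(\mu,j)$ with $\mu\prec\lambda$ (this finiteness must be tracked carefully — it should follow from $T_k$ being finite-dimensional, since $T_k\in\mathcal{F}^{b}(\Delta)$, together with Lemma~\ref{lem5} and the fact that standard filtrations of projectives are finite, so only finitely many $\mathrm{ext}^1$-spaces are nonzero and each is finite dimensional), form the universal extension
\begin{displaymath}
T_k \hookrightarrow T_{k+1} \tto \bigoplus_{\mu,j} \Delta(\mu)\langle j\rangle^{\oplus d_{\mu,j}}
\end{displaymath}
killing all these $\mathrm{ext}^1$'s. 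One checks $T_{k+1}$ again has a finite standard filtration, the new standard subquotients have strictly smaller index $\mu\prec\lambda$, and $\mathrm{ext}^1_A(\Delta(\mu)\langle j\rangle,T_{k+1})=0$ for the $\mu$ handled. Because the preorder $\preceq$ induces a partial order on the finite set $\overline{\Lambda}$, the index can drop only finitely often, so the process terminates at some $T$ with $\mathrm{ext}^1_A(\Delta(\mu)\langle j\rangle,T)=0$ for all $\mu,j$; by Theorem~\ref{thm2}\eqref{thm2-2} (applicable since $T$ is finite-dimensional, hence in $A^{\downarrow}\text{-}\mathrm{gmod}$) this gives $T\in\mathcal{F}^{\downarrow}(\overline{\nabla})$, while $T\in\mathcal{F}^{b}(\Delta)\subseteq\mathcal{F}^{\downarrow}(\Delta)$ by construction. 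Then $T$ has a unique indecomposable summand $T(\lambda)$ containing $\Delta(\lambda)$ in the bottom of its standard filtration (in the sense that $\Delta(\lambda)\hookrightarrow T(\lambda)$ with cokernel in $\mathcal{F}^{\downarrow}(\Delta)$); uniqueness and indecomposability follow from Corollary~\ref{cor11} (the multiplicity of $\Delta(\lambda)$ being computed by $\dim\mathrm{hom}_A(-,\overline{\nabla}(\lambda))$) together with the standard argument that $\mathrm{ext}^1$ between a $\Delta$-filtered and a $\overline{\nabla}$-filtered module vanishes (Corollary~\ref{corlem5}), so any decomposition is forced by the top section.

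For part~\eqref{thm8-3}, given an indecomposable $X\in\mathcal{F}^{\downarrow}(\Delta)\cap\mathcal{F}^{\downarrow}(\overline{\nabla})$, note $X$ is finite dimensional (it has a finite proper costandard filtration by Lemma~\ref{lemlemnew}, and proper costandard modules are finite dimensional by Lemma~\ref{lem4}). Pick $\lambda$ minimal with respect to $\preceq$ such that $\Delta(\lambda)\langle j\rangle$ occurs in a standard filtration of $X$ for some $j$; minimality forces a copy of $\Delta(\lambda)\langle j\rangle$ to sit at the bottom, giving an embedding $\Delta(\lambda)\langle j\rangle\hookrightarrow X$ with cokernel in $\mathcal{F}^{\downarrow}(\Delta)$. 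Then the standard "exchange"/comparison argument with $T(\lambda)\langle j\rangle$: using $\mathrm{ext}^1_A(\mathcal{F}^{\downarrow}(\Delta),\mathcal{F}^{\downarrow}(\overline{\nabla}))=0$ (Corollary~\ref{corlem5}) one lifts the inclusions to maps $T(\lambda)\langle j\rangle\to X$ and $X\to T(\lambda)\langle j\rangle$ whose composites are identities on the $\Delta(\lambda)\langle j\rangle$ at the bottom, hence (by indecomposability and Fitting's lemma on the finite-dimensional $\mathrm{End}$) isomorphisms, so $X\cong T(\lambda)\langle j\rangle$.

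I expect the main obstacle to be the termination/finiteness bookkeeping in part~\eqref{thm8-2}: one must verify at each stage that only finitely many extension groups $\mathrm{ext}^1_A(\Delta(\mu)\langle j\rangle,T_k)$ are nonzero and each is finite dimensional, so that the universal extension is an extension by a \emph{finite} direct sum of standard modules (keeping $T_{k+1}$ finite dimensional and in $\mathcal{F}^{b}(\Delta)$), and that the "highest weight" occurring strictly decreases so the recursion halts after finitely many steps. This is exactly the point where the hypotheses that $A$ is positively graded with finite-dimensional components and that standard filtrations are controlled by Lemma~\ref{lem5} are essential, and it is where the infinite-dimensionality of $A$ could otherwise cause the construction to run forever.
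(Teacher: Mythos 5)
Your overall plan (Ringel's universal-extension construction, then comparison of maps to establish uniqueness) matches the paper's, but the proof has a genuine gap at the point you yourself flag as the ``main obstacle'': the finite-dimensionality bookkeeping is not merely something that ``must be tracked carefully'' --- it actually fails. Remark~\ref{rem14} and Example~\ref{exm2} in the paper show that $T(\lambda)$ can have an \emph{infinite} standard filtration (and be infinite-dimensional), already at the level of Theorem~\ref{thm8}, which assumes only that $A$ is positively graded standardly stratified (not adapted). Concretely, even at the very first step of the iteration one can have $\mathrm{ext}^1_A(\Delta(\mu)\langle j\rangle,\Delta(\lambda))\neq 0$ for infinitely many $j$ (Example~\ref{exm2} with $\lambda=2$, $\mu=1$: the ext is nonzero for every $j\leq -1$). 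Lemma~\ref{lem9} only bounds $j$ from \emph{above}, not below, which is precisely enough to guarantee the universal extension stays in $A^{\downarrow}\text{-}\mathrm{gmod}$ but not to keep it finite-dimensional. The paper therefore takes its universal extension target to be $Z=\bigoplus_{\nu\in\overline{\mu}}\bigoplus_{j\leq j'}\Delta(\nu)\langle j\rangle^{l_{\nu,j}}$, a possibly infinite (but locally finite, bounded-below) direct sum; your ``finite sum over finitely many pairs $(\mu,j)$'' step would simply not kill all the required extensions.

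This infinite-dimensionality then breaks your argument for part (\ref{thm8-3}) as well: you claim $X$ is finite-dimensional by Lemma~\ref{lemlemnew}, but that lemma produces a possibly infinite filtration, so the endomorphism ring of $T(\lambda)$ need not be finite-dimensional and Fitting's lemma is not directly available. This is exactly what the paper's Lemma~\ref{lemnn5} is designed to fix: for each $n$ it constructs a finite-dimensional submodule $N^{(n)}\subseteq T(\lambda)$ with a finite standard filtration, agreeing with $T(\lambda)$ in degrees $\leq n$, stable under all endomorphisms of $T(\lambda)$, indecomposable (hence with local, finite-dimensional endomorphism algebra). The Fitting-type argument that $\alpha\circ\beta$ is an automorphism is then run degree by degree on these truncations $N^{(n)}$, not on $T(\lambda)$ itself. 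Without some version of this truncation device, parts (\ref{thm8-2}) (indecomposability of the constructed module) and (\ref{thm8-3}) (uniqueness) are not established.
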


We would need the following lemmata:

\begin{lemma}\label{lem9}
For all $\lambda,\mu\in\Lambda$, $i\geq 0$ and all $j\gg 0$ we have
\begin{displaymath}
\mathrm{ext}^i_A(\Delta(\lambda)\langle j\rangle,\Delta(\mu))=0.
\end{displaymath}
\end{lemma}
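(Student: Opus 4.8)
The plan is to exploit the positive grading together with finiteness of the relevant data. Fix $\lambda,\mu\in\Lambda$ and $i\geq 0$. First I would recall that, by assumption (and as used in the proof of Corollary~\ref{corlem5}), every indecomposable projective has a finite standard filtration, so the minimal projective resolution $\mathcal{P}^{\bullet}\to\Delta(\mu)$ has only finitely many nonzero terms, each of which is a \emph{finite} direct sum of modules of the form $P(\nu)\langle k\rangle$. In particular there is an integer $N$ (depending only on $\mu$) such that every $P(\nu)\langle k\rangle$ occurring in $\mathcal{P}^{\bullet}$ satisfies $k\geq -N$, i.e.\ $\mathfrak{b}(\mathcal{P}^{\ell})\geq -N$ for all $\ell$.

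Next, $\mathrm{ext}^i_A(\Delta(\lambda)\langle j\rangle,\Delta(\mu))$ is computed as a subquotient of $\mathrm{hom}_A(\Delta(\lambda)\langle j\rangle,\mathcal{P}^{i})$. Since $A$ is positively graded and $\Delta(\lambda)\langle j\rangle$ is generated in degree $-j$, any nonzero homomorphism $\Delta(\lambda)\langle j\rangle\to P(\nu)\langle k\rangle$ forces the degree-$(-j)$ component of $P(\nu)\langle k\rangle$ to be nonzero, hence $-j\geq \mathfrak{b}(P(\nu)\langle k\rangle)=-k\geq -N$, that is $j\leq N$. Therefore, as soon as $j>N$, we get $\mathrm{hom}_A(\Delta(\lambda)\langle j\rangle,\mathcal{P}^{i})=0$ and consequently $\mathrm{ext}^i_A(\Delta(\lambda)\langle j\rangle,\Delta(\mu))=0$. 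Taking the maximum of the bounds $N$ over the finitely many $\mu$ (or simply fixing $\mu$, since the statement is for fixed $\lambda,\mu$) gives the desired conclusion for all $j\gg 0$.

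The only point requiring a little care — and the step I expect to be the main obstacle — is justifying that the minimal projective resolution of $\Delta(\mu)$ really is finite with each term a finite direct sum of shifted indecomposable projectives; this is exactly where hypothesis~(a)/(\ref{thm1-c1}) of the setting enters, and it is already invoked in the proof of Corollary~\ref{corlem5}\eqref{corlem5.2}, so I would simply cite that argument. Everything else is a routine degree count using positivity of the grading, so no genuinely new difficulty arises.
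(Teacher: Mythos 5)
There is a genuine error at the heart of the argument. You write that $\mathrm{ext}^i_A(\Delta(\lambda)\langle j\rangle,\Delta(\mu))$ is ``computed as a subquotient of $\mathrm{hom}_A(\Delta(\lambda)\langle j\rangle,\mathcal{P}^{i})$,'' where $\mathcal{P}^{\bullet}$ is a \emph{projective} resolution of $\Delta(\mu)$, i.e.\ of the \emph{second} argument. This is not how $\mathrm{ext}$ is computed: one uses a projective resolution of the \emph{first} argument, or an injective coresolution of the \emph{second}. Replacing the second argument by a projective resolution and applying $\mathrm{hom}_A(\Delta(\lambda)\langle j\rangle,{}_-)$ does not produce a complex whose cohomology is $\mathrm{ext}^{\bullet}$. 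A minimal counterexample: for the path algebra of $1\to 2$ with $S_2=P_2$ projective, the projective resolution of $S_2$ is concentrated in homological degree $0$, so $\mathrm{hom}(S_1,\mathcal{P}^{1})=0$, yet $\mathrm{ext}^1(S_1,S_2)\cong\Bbbk\neq 0$. In general, even if $\mathrm{hom}_A(\Delta(\lambda)\langle j\rangle,\mathcal{P}^{p})=0$ for all $p$, the hypercohomology spectral sequence shows one would also need vanishing of $\mathrm{ext}^q_A(\Delta(\lambda)\langle j\rangle,\mathcal{P}^{p})$ for all $q>0$, which your degree count does not address.

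The underlying idea of a degree count against a finite, finitely generated projective resolution is sound, but it must be applied to the resolution $\mathcal{Q}^{\bullet}\to\Delta(\lambda)$ of the \emph{first} argument (which is likewise finite with finitely generated terms by the standardly stratified hypothesis and induction along $\preceq$). Then $\mathrm{ext}^i_A(\Delta(\lambda)\langle j\rangle,\Delta(\mu))$ is a subquotient of $\mathrm{hom}_A(\mathcal{Q}^{i}\langle j\rangle,\Delta(\mu))$; a summand $P(\nu)\langle k+j\rangle$ of $\mathcal{Q}^{i}\langle j\rangle$ is generated in degree $-(k+j)$, while $\Delta(\mu)$ lives in degrees $\geq 0$, so any nonzero map forces $j\leq -k$, and the finitely many $k$ occurring give a uniform bound. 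Once corrected in this way, your argument is essentially the same as the paper's: the paper carries out the same induction along $\preceq$ explicitly, via the short exact sequence $\mathrm{Ker}\hookrightarrow P(\lambda)\tto\Delta(\lambda)$ and dimension shift, which is precisely the construction of $\mathcal{Q}^{\bullet}$ one step at a time. (There is also a small sign slip in your proposal: with the paper's convention $(M\langle i\rangle)_j=M_{i+j}$, one has $\mathfrak{b}(P(\nu)\langle k\rangle)=-k$, so $k\geq -N$ corresponds to $\mathfrak{b}\leq N$, not $\mathfrak{b}\geq -N$; but this is cosmetic compared to the issue above.)
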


\begin{proof}
We proceed by induction with respect to $\preceq$. If $\lambda$ is
maximal, the module $\Delta(\lambda)$ is projective and the claim
is trivial for $i>0$. For $i=0$ the claim follows from the fact that
$A$ is positively graded. Now, if $\lambda$ is not maximal, we consider
the short exact sequence \eqref{eq3}. In this sequence $\mathrm{Ker}$
has a {\em finite} filtration by (shifted) standard modules, whose
indexes are strictly greater than $\lambda$ with respect to $\preceq$. Hence
the claim follows by the usual dimension shift (note that it 
is enough to consider only finitely many values of $i$, namely
$i\leq |\Lambda|$).
\end{proof}

\begin{lemma}\label{lem10}
For all $\lambda,\mu\in\Lambda$ and $j\in\mathbb{Z}$ the inequality
\begin{displaymath}
\mathrm{ext}^1_A(\Delta(\lambda)\langle j\rangle,\Delta(\mu))\neq 0. 
\end{displaymath}
implies $\lambda\prec\mu$.
\end{lemma}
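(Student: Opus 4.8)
The statement to prove is that $\mathrm{ext}^1_A(\Delta(\lambda)\langle j\rangle,\Delta(\mu))\neq 0$ forces $\lambda\prec\mu$. The plan is to argue by induction on $|\overline{\Lambda}|$, following the same reduction pattern used repeatedly above (peeling off a maximal equivalence class $\overline{\nu}$ and passing to $B_{\overline{\nu}}$). The base case $|\overline{\Lambda}|=1$ is vacuous, since there all standard modules are projective and the $\mathrm{ext}^1$ in question vanishes identically, so the hypothesis is never satisfied.

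For the inductive step, fix a maximal $\nu\in\Lambda$ and split into cases according to where $\lambda$ and $\mu$ sit relative to $\overline{\nu}$. If $\lambda\in\overline{\nu}$, then $\Delta(\lambda)$ is projective, so the $\mathrm{ext}^1$ vanishes and there is nothing to prove. If $\lambda,\mu\notin\overline{\nu}$, then both $\Delta(\lambda)$ and $\Delta(\mu)$ are $B_{\overline{\nu}}$-modules and coincide with the standard modules for $B_{\overline{\nu}}$; by Lemma~\ref{lem3} the extension group computed over $A$ equals the one over $B_{\overline{\nu}}$, and the inductive assumption gives $\lambda\prec\mu$ (in $B_{\overline{\nu}}$, hence in $A$ since the preorder is the restriction). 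The remaining case is $\mu\in\overline{\nu}$ and $\lambda\notin\overline{\nu}$; here $\nu$ maximal gives $\lambda\prec\nu\sim\mu$, so the conclusion $\lambda\prec\mu$ holds automatically, regardless of whether the $\mathrm{ext}^1$ vanishes. Actually one should double-check the case $\lambda\notin\overline\nu$, $\mu\notin\overline\nu$ more carefully: it is conceivable that $\Delta_A(\mu)$ differs from $\Delta_{B_{\overline\nu}}(\mu)$ if some $P(\rho)$ with $\rho\in\overline\nu$ maps onto a piece of $P(\mu)$, but since $\nu$ is maximal and $\mu\not\sim\nu$, no such $\rho$ satisfies $\mu\prec\rho$, so the defining submodules coincide and $\Delta_A(\mu)=\Delta_{B_{\overline\nu}}(\mu)$; similarly for $\lambda$.

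The main obstacle I anticipate is the verification that $\Delta_A(\mu)$ really is annihilated by $I_{\overline\nu}$ and agrees with the $B_{\overline\nu}$-standard module when $\mu\notin\overline\nu$ — this is the technical heart that makes the Lemma~\ref{lem3} reduction legitimate, and it rests on the maximality of $\nu$ together with the definition of $\Delta$. Once that is in place the argument is a routine case analysis with no homological subtlety, essentially just bookkeeping with the preorder. An alternative, possibly slicker route avoiding the case split entirely: apply $\mathrm{hom}_A(\Delta(\lambda)\langle j\rangle,{}_-)$ to the short exact sequence $\Delta(\mu)\hookrightarrow P(\mu)\tto C$, where $C$ has all composition factors $L(\rho)\langle k\rangle$ with $\rho\prec\mu$; a nonzero $\mathrm{ext}^1_A(\Delta(\lambda)\langle j\rangle,\Delta(\mu))$ then maps into $\mathrm{hom}_A(\Delta(\lambda)\langle j\rangle,C)$ (using projectivity of $P(\mu)$), forcing $L(\lambda)$ to appear in $C$, hence $\lambda\prec\mu$. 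I would present this second argument as the proof, as it is self-contained and short; the induction is only needed as a fallback if one worries about $\mathrm{hom}_A(\Delta(\lambda)\langle j\rangle,C)$ being computed correctly, but the composition-factor description of $C$ handles it directly.
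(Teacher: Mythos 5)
Both of your arguments have genuine problems, and the one you say you would present as the proof is the more seriously broken of the two.

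\textbf{The second (``slicker'') argument.} It starts from a short exact sequence $\Delta(\mu)\hookrightarrow P(\mu)\tto C$ that does not exist: $\Delta(\mu)$ is a \emph{quotient} of $P(\mu)$, not a submodule. (Perhaps you are thinking of $\overline{\nabla}(\mu)\hookrightarrow I(\mu)$.) The projective $P(\mu)$ also sits in the \emph{second} variable of your ext group, so ``projectivity of $P(\mu)$'' does not kill anything. The idea you are reaching for can be repaired by putting the projective presentation in the \emph{first} variable: apply $\mathrm{hom}_A({}_-,\Delta(\mu))$ to the sequence $K(\lambda)\langle j\rangle\hookrightarrow P(\lambda)\langle j\rangle\tto\Delta(\lambda)\langle j\rangle$. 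Since $\mathrm{ext}^1_A(P(\lambda)\langle j\rangle,\Delta(\mu))=0$, a nonzero $\mathrm{ext}^1_A(\Delta(\lambda)\langle j\rangle,\Delta(\mu))$ forces $\mathrm{hom}_A(K(\lambda)\langle j\rangle,\Delta(\mu))\neq 0$. The top of $K(\lambda)$ consists (up to shift) of simples $L(\rho)$ with $\lambda\prec\rho$ because $K(\lambda)$ has a finite filtration with subquotients $\Delta(\rho)\langle i\rangle$, $\lambda\prec\rho$, while every composition factor of $\Delta(\mu)$ is of the form $L(\sigma)\langle k\rangle$ with $\sigma\preceq\mu$. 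A nonzero map therefore produces $\rho$ with $\lambda\prec\rho\preceq\mu$, whence $\lambda\prec\mu$.

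\textbf{The first (inductive) argument.} The case $\mu\in\overline{\nu}$, $\lambda\notin\overline{\nu}$ is handled incorrectly. Maximality of $\nu$ does \emph{not} imply $\lambda\prec\nu$: the preorder is not assumed total, so $\lambda$ and $\nu$ may be incomparable, in which case $\lambda\not\prec\mu$ and the conclusion is certainly not automatic --- one must actually show $\mathrm{ext}^1=0$. Note also that $\Delta(\mu)$ being projective in this case (since $\mu$ is then maximal) helps nothing, because it sits in the second variable. Your aside about whether $\Delta_A(\mu)=\Delta_{B_{\overline\nu}}(\mu)$ when $\mu\notin\overline\nu$ also has the logic backwards: the point is that any $\rho\in\overline\nu$ with a nonzero map $P(\rho)\to P(\mu)$ \emph{does} satisfy $\mu\prec\rho$ (by the standard filtration of $P(\mu)$ and maximality of $\nu$), so the corresponding images are already killed in $\Delta_A(\mu)$.

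The paper argues contrapositively and much more economically: assume $\lambda\not\prec\mu$; then the set of classes $\overline{\rho}$ with $\lambda\prec\rho$ is a coideal not containing $\overline\lambda$ or $\overline\mu$, so, quotienting them away one maximal class at a time and invoking Lemma~\ref{lem3} each time, one reduces to an algebra over which $\lambda$ is maximal; there $\Delta(\lambda)$ is projective and the $\mathrm{ext}^1$ vanishes. You might want to adopt either that reduction or the corrected hom-sequence argument above.
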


\begin{proof}
If $\lambda\not\prec\mu$, then, using Lemma~\ref{lem3}, we may assume that
$\lambda$ is maximal. In this case $\Delta(\lambda)$ is projective
and the claim becomes trivial.
\end{proof}

\begin{lemma}\label{lemnn7}
For all $M\in\mathcal{F}^{\downarrow}(\Delta)$,
$N\in\mathcal{F}^{\downarrow}(\overline{\nabla})$ and $i\in\mathbb{N}$
we have $\mathrm{ext}^i_A(M,N)=0$.
\end{lemma}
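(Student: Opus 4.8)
The statement to prove is Lemma~\ref{lemnn7}: for $M\in\mathcal{F}^{\downarrow}(\Delta)$, $N\in\mathcal{F}^{\downarrow}(\overline{\nabla})$ and $i\in\mathbb{N}$ we have $\mathrm{ext}^i_A(M,N)=0$. The plan is to reduce to the cases already handled in Lemma~\ref{lem5} and Corollary~\ref{corlem5}, using the fact that $N$ is finite dimensional (Lemma~\ref{lem4} plus Lemma~\ref{lemlemnew}, since a module in $\mathcal{F}^{\downarrow}(\overline{\nabla})$ has a filtration of the form \eqref{eqfiltration}, hence lies in $A^{\downarrow}\text{-}\mathrm{gmod}$; in fact one should note that a module with a filtration \eqref{eqfiltration} whose subquotients are all finite dimensional with uniformly bounded dimension and with $\mathfrak{b}(M^{(i)})\to+\infty$ is itself finite dimensional only if the filtration is finite — so actually $\mathcal{F}^{\downarrow}(\overline{\nabla})$ consists of finite-dimensional modules). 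So $N\in\mathcal{F}^{b}(\overline{\nabla})$.

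\textbf{Step 1: reduce $N$ to a proper costandard module.} Since $\mathcal{F}^{\downarrow}(\overline{\nabla})=\mathcal{F}^{b}(\overline{\nabla})$ has a \emph{finite} filtration with proper costandard subquotients, and $\mathrm{ext}^i_A(M,-)$ is additive on short exact sequences (long exact sequence), by induction on the length of this filtration it suffices to prove $\mathrm{ext}^i_A(M,\overline{\nabla}(\mu)\langle j\rangle)=0$ for all $\mu,j$ and $i\geq 1$. But this is exactly Corollary~\ref{corlem5}\eqref{corlem5.1}.

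\textbf{Alternative/safer route.} If one is worried about whether $\mathcal{F}^{\downarrow}(\overline{\nabla})$ really equals $\mathcal{F}^b(\overline{\nabla})$, one can instead argue directly on $M$: by Theorem~\ref{thm2}\eqref{thm2-1}, $\mathcal{F}^{\downarrow}(\Delta)=\mathcal{X}$, so $\mathrm{ext}^i_A(M,\overline{\nabla}(\lambda)\langle j\rangle)=0$ for all $i>0$; then dualize the bookkeeping of Corollary~\ref{corlem5}. Concretely: $M\in A^{\downarrow}\text{-}\mathrm{gmod}$ has a minimal projective resolution $\mathcal{P}^\bullet$ with $\mathcal{P}^k_j=0$ for $j<\mathfrak{b}(M)$, but the individual $\mathcal{P}^k$ need not be finite; however $N$ \emph{is} finite dimensional, so $\mathrm{hom}_A(\mathcal{P}^k,N)$ only sees finitely many indecomposable summands of $\mathcal{P}^k$, namely those generated in degrees $\leq$ the top nonzero degree of $N$. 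Thus computing $\mathrm{ext}^i_A(M,N)$ one may replace $\mathcal{P}^\bullet$ by a complex of finitely generated projectives in each degree, and then the long exact sequence argument against the finite proper costandard filtration of $N$ (subquotients $\overline{\nabla}(\mu)\langle j\rangle$) together with Corollary~\ref{corlem5}\eqref{corlem5.1} gives the vanishing.

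\textbf{Main obstacle.} The only subtlety is the interchange of the (possibly infinite) filtration of $N$ with the functor $\mathrm{ext}^i_A(M,-)$: a priori $\mathrm{ext}$ need not commute with infinite direct limits, so one genuinely needs that the filtration of $N$ is \emph{finite} — equivalently, that every module in $\mathcal{F}^{\downarrow}(\overline{\nabla})$ is finite dimensional. This follows because proper costandard modules are finite dimensional (Lemma~\ref{lem4}) of uniformly bounded dimension (finitely many up to shift), so a filtration \eqref{eqfiltration} with $\mathfrak{b}(M^{(i)})\to+\infty$ and each $M^{(i)}/M^{(i+1)}$ proper costandard forces $M^{(i)}$ to vanish for $i\gg0$, i.e. the filtration terminates. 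Once this is pinned down, the rest is the routine dimension-shift/long-exact-sequence argument, and I would expect the write-up to be just a few lines citing Lemma~\ref{lem4}, Lemma~\ref{lemlemnew}, and Corollary~\ref{corlem5}\eqref{corlem5.1}.
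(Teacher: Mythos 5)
There is a genuine gap. Your proof hinges on the claim that every $N\in\mathcal{F}^{\downarrow}(\overline{\nabla})$ is finite dimensional, i.e.\ that $\mathcal{F}^{\downarrow}(\overline{\nabla})=\mathcal{F}^{b}(\overline{\nabla})$, which you derive from the observation that the subquotients $N^{(i)}/N^{(i+1)}$ are finite dimensional of bounded size and $\mathfrak{b}(N^{(i)})\to+\infty$. That inference is false: $\mathfrak{b}(N^{(i)})\to+\infty$ says the bottom degree of $N^{(i)}$ tends to $+\infty$, not that $N^{(i)}$ eventually vanishes. The simplest counterexample is $A=\Bbbk[x]$ with $|\Lambda|=1$: then every proper costandard module is the one-dimensional simple and $\mathcal{F}^{\downarrow}(\overline{\nabla})=A^{\downarrow}\text{-}\mathrm{gmod}$, which contains the infinite-dimensional module $\Bbbk[x]$ with the filtration $N^{(i)}=x^{i}\Bbbk[x]$; here $\mathfrak{b}(N^{(i)})=i\to+\infty$ yet $N^{(i)}\neq 0$ for all $i$. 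Consequently your Step~1 reduction (passing to a finite proper costandard filtration of $N$) and your ``alternative/safer route'' (which also explicitly uses $\dim N<\infty$) both break down; without that, the long exact sequence argument against the filtration of $N$ does not terminate and $\mathrm{ext}^i_A(M,-)$ does not a priori commute with the inverse limit of the $N/N^{(n)}$.

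The paper goes in the mirror-image direction and that is exactly what makes it work. One reduces on the $M$ side: using Lemma~\ref{lem10} and finiteness of $\Lambda$ one produces a \emph{finite} filtration of $M$ whose subquotients are (possibly infinite) direct sums of shifted standard modules; this is harmless because $\mathrm{ext}^i_A(-,N)$ sends direct sums in the first argument to products, and a product of zeros is zero. The base case $M=\Delta(\lambda)\langle j\rangle$ then falls under Corollary~\ref{corlem5}\eqref{corlem5.2}, and it is \emph{inside the proof of that Corollary} that the possibly infinite proper costandard filtration of $N$ is dealt with: the minimal projective resolution of a standard module is a bounded complex of finitely generated projectives (because projectives have finite standard filtrations), so for $n\gg 0$ there are no homomorphisms from its components into $N^{(n)}$ at all, and $N/N^{(n)}$ has a finite proper costandard filtration. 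This asymmetry — decompose $M$, not $N$ — is precisely the point your argument misses.
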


\begin{proof}
It is enough to prove the claim in the case when $M$ has a filtration
of the form \eqref{eqfiltration}. Let $\lambda$ be a maximal index
occurring in standard subquotients of $M$. Then from Lemma~\ref{lem10}
we have that all corresponding standard subquotients do not extend
any other standard subquotients of $M$. Therefore $M$ has a
submodule isomorphic to a direct sum of shifted
$\Delta(\lambda)$ such that the cokernel has a standard filtration
in which no subquotient of the form  $\Delta(\lambda)$ (up to shift) occur.
Since $\Lambda$ is finite, proceeding inductively we construct
a finite filtration of $M$ whose subquotients
are direct sums of standard modules. This means that it is enough to
prove the claim in the case when $M$ is a direct sum of standard
modules. In this case the claim follows from 
Corollary~\ref{corlem5}\eqref{corlem5.2}.
\end{proof}

\begin{proof}[Proof of Theorem~\ref{thm8}.]
Statement \eqref{thm8-1} follows from the additivity of the conditions,
which appear on the right hand side in the formulae of Theorem~\ref{thm2}.

The existence part of statement \eqref{thm8-2} is proved using the 
usual approach of universal extensions (see \cite{Ri}). 
We start with $\Delta(\lambda)$ and go down with respect to 
the preorder $\preceq$. If all first extensions
from all (shifted) standard modules to $\Delta(\lambda)$ vanish,
we get $\Delta(\lambda)\in \mathcal{F}^{\downarrow}(\overline{\nabla})$
by Theorem~\ref{thm2}\eqref{thm2-2}. Otherwise there exist $\mu\in\Lambda$ and
$j'\in\mathbb{Z}$ such that 
\begin{displaymath}
\mathrm{ext}^1_A(\Delta(\mu)\langle j'\rangle,\Delta(\lambda))\neq 0. 
\end{displaymath}
We assume that $\mu$ is maximal with such property 
(we have $\mu\prec\lambda$ by Lemma~\ref{lem10}) and 
use Lemma~\ref{lem9} to choose  $j'$ such that
\begin{displaymath}
\mathrm{ext}^1_A(\Delta(\nu)\langle j\rangle,\Delta(\lambda))\neq 0 
\end{displaymath}
implies $j\leq j'$ for all $\nu\in\overline{\mu}$.

For every $\nu\in\overline{\mu}$ and $j\leq j'$ the space
$\mathrm{ext}^1_A(\Delta(\nu)\langle j\rangle,\Delta(\lambda))$ is
finite dimensional, say of dimension $l_{\nu,j}$. Consider
the universal extension 
\begin{equation}\label{eqindec}
X\hookrightarrow Y\tto Z, 
\end{equation}
where $X=\Delta(\lambda)$ and
\begin{displaymath}
Z=\bigoplus_{\nu\in\overline{\mu}}\bigoplus_{j\leq j'}
\Delta(\nu)\langle j\rangle^{l_{\nu,j}}\in \mathcal{F}^{\downarrow}(\Delta)
\end{displaymath}
(note that $\mathrm{ext}_A^1(Z,Z)=0$ by Lemma~\ref{lem10}).
We have $Y\in \mathcal{F}^{\downarrow}(\Delta)$ 
by construction. We further claim that $Y$ is indecomposable. Indeed,
Let $e\in\mathrm{end}_A(Y)$ be a nonzero idempotent
(note that $e$ is homogeneous of degree zero). As
$\nu\prec\lambda$, we have $\mathrm{hom}_A(\Delta(\lambda),
\Delta(\nu)\langle j\rangle)=0$ for any $\nu$ and $j$ as above.
Therefore $e$ maps $X$ (which is indecomposable) 
to $X$. If $e\vert_{X}=0$, then  $e$ provides 
a splitting for a nontrivial direct summand of $Z$ in \eqref{eqindec}; 
if $e\vert_{X}=\mathrm{id}_{X}$ and
$e\neq \mathrm{id}_Y$, then $\mathrm{id}_Y-e\neq 0$ 
annihilates $X$ and hence provides a 
splitting for a nontrivial direct summand of $Z$ in \eqref{eqindec}. 
This contradicts our construction of $Y$ as the universal extension. 
Therefore $e=\mathrm{id}_Y$, which proves that
the module $Y$ is indecomposable. By Lemma~\ref{lem10},
there are no extensions between the summands of $Z$. 
From $\mathrm{ext}_A^1(Z,Z)=0$ and the
universality of our extension, we get
\begin{displaymath}
\mathrm{ext}^1_A(\Delta(\nu)\langle j\rangle,Y)=0
\end{displaymath}
for all $\nu\in\overline{\mu}$ and all $j$.

Now take the indecomposable module constructed in the previous paragraph 
as $X$, take a maximal $\mu'$ such that for some $j$ we have
$\mathrm{ext}^1_A(\Delta(\mu')\langle j\rangle,X)\neq0$
and do the same thing as in the previous paragraph. 
Proceed inductively. In a finite
number of steps we end up with an indecomposable module $T(\lambda)$ such that 
$\Delta(\lambda)\hookrightarrow T(\lambda)$, the cokernel is in 
$\mathcal{F}^{\downarrow}(\Delta)$, and 
\begin{displaymath}
\mathrm{ext}^1_A(\Delta(\mu)\langle j\rangle,T(\lambda))= 0
\end{displaymath}
for all $\mu$ and $j$. By Theorem~\ref{thm2}\eqref{thm2-2}, we have 
$T(\lambda)\in\mathcal{F}^{\downarrow}(\overline{\nabla})$. 
This proves the existence part of statement \eqref{thm8-2}. 
The uniqueness part will follow from statement \eqref{thm8-3}.

Let $M\in \mathcal{F}^{\downarrow}(\Delta)\cap 
\mathcal{F}^{\downarrow}(\overline{\nabla})$ be indecomposable
and $\Delta(\lambda)\hookrightarrow M$ be such that
the cokernel $\mathrm{Coker}$ has a standard filtration. Applying
$\mathrm{hom}_A({}_-,T(\lambda))$ to the short exact
sequence
\begin{displaymath}
\Delta(\lambda)\hookrightarrow M \tto \mathrm{Coker}
\end{displaymath}
we obtain the exact sequence
\begin{displaymath}
\mathrm{hom}_A(M,T(\lambda))\to
\mathrm{hom}_A(\Delta(\lambda),T(\lambda))\to 
\mathrm{ext}_A^1(\mathrm{Coker},T(\lambda)).
\end{displaymath}
Here the right term is zero by Lemma~\ref{lemnn7} and the definition
of $T(\lambda)$. As the middle term is obviously nonzero, we obtain
that the left term is nonzero as well. This gives us a nonzero
map $\alpha$ from $M$ to $T(\lambda)$. Similarly
one constructs a nonzero map $\beta$ from $T(\lambda)$
to $M$ such that the composition $\alpha\circ\beta$ is the identity 
on $\Delta(\lambda)$. We claim the following:

\begin{lemma}\label{lemnn5}
Let $T(\lambda)$ be as above.
\begin{enumerate}[(i)]
\item \label{lemnn5.1} For any $n\in\mathbb{Z}$ there exists
a submodule $N^{(n)}$ of $T(\lambda)$ with the following properties:
\begin{enumerate}[(a)]
\item\label{lemnn5.1.0}
$N^{(n)}$ is indecomposable;
\item\label{lemnn5.1.1}
$N^{(n)}$ has finite standard filtration starting with $\Delta(\lambda)$;
\item\label{lemnn5.1.2}
$N^{(n)}_i=T(\lambda)_i$ for all $i\leq n$;
\item\label{lemnn5.1.3}
every endomorphism of $T(\lambda)$ restricts to an endomorphism 
of $N^{(n)}$.
\end{enumerate}
\item \label{lemnn5.2}
The composition $\alpha\circ\beta$ is an automorphism of 
$T(\lambda)$.
\end{enumerate}
\end{lemma}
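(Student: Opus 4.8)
The plan is to exploit the positive grading of $A$ together with the fact that $T(\lambda)\in A^{\downarrow}\text{-}\mathrm{gmod}$, so that $T(\lambda)=\bigoplus_{i\geq\mathfrak{b}(\Delta(\lambda))}T(\lambda)_i$ with each $T(\lambda)_i$ finite dimensional. For part~\eqref{lemnn5.1}, fix $n$ and let $N^{(n)}$ be the submodule of $T(\lambda)$ generated by $\bigoplus_{i\leq n}T(\lambda)_i$. First I would check property~\eqref{lemnn5.1.3}: any $\varphi\in\mathrm{end}_A(T(\lambda))$ is homogeneous of degree zero, so it preserves each graded component $T(\lambda)_i$, hence preserves the generating space of $N^{(n)}$ and therefore $N^{(n)}$ itself. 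Property~\eqref{lemnn5.1.2} is immediate from the construction, since $N^{(n)}$ contains $\bigoplus_{i\leq n}T(\lambda)_i$ and is a submodule of $T(\lambda)$. The substantive point is property~\eqref{lemnn5.1.1}: I would argue that since $T(\lambda)$ has a (possibly infinite) filtration of the form \eqref{eqfiltration} with standard subquotients, starting with $\Delta(\lambda)=T(\lambda)^{(0)}/T(\lambda)^{(1)}$, and since $\lim_{i\to\infty}\mathfrak{b}(T(\lambda)^{(i)})=+\infty$, all but finitely many terms $T(\lambda)^{(m)}$ of this filtration are concentrated in degrees $>n$. Choosing $m$ so that $\mathfrak{b}(T(\lambda)^{(m)})>n$, the module $N^{(n)}$ is sandwiched between the submodule generated by degrees $\leq n$ and $T(\lambda)$ modulo $T(\lambda)^{(m)}$; a short argument shows $N^{(n)}$ equals a partial standard submodule, i.e.\ $T(\lambda)/N^{(n)}$ is (up to identification) $T(\lambda)/($ the submodule generated by low degrees $)$, which lies inside $T(\lambda)/T(\lambda)^{(m)}\cong$ finite standard-filtered module, forcing $N^{(n)}$ to have a finite standard filtration beginning with $\Delta(\lambda)$ once $n$ is chosen large enough (for small $n$ one simply enlarges $n$; the statement is ``for any $n$ there exists'', and nothing forbids taking the $N^{(n)}$ for larger $n$, but one wants the stated indices, so I'd instead directly note that the submodule generated by $\bigoplus_{i\le n}T(\lambda)_i$ always sits inside some finite term $T(\lambda)^{(0)}/T(\lambda)^{(m)}$ and inherits a finite standard filtration by the standard argument that submodules of standard-filtered modules in which the relevant extension groups vanish are again standard-filtered — here using Theorem~\ref{thm2}\eqref{thm2-1} applied to $\overline{\nabla}$). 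Finally, indecomposability~\eqref{lemnn5.1.0}: by~\eqref{lemnn5.1.3} every idempotent $e\in\mathrm{end}_A(N^{(n)})$ that could split $N^{(n)}$ would have to be compatible with the action of $\mathrm{end}_A(T(\lambda))$; more simply, since $\Delta(\lambda)\hookrightarrow N^{(n)}$ with standard-filtered cokernel and $\mathrm{hom}_A(\Delta(\lambda),\Delta(\nu)\langle j\rangle)=0$ for the higher $\nu$ appearing (as in the proof of Theorem~\ref{thm8}), the same endomorphism-ring argument used there for $Y$ shows $N^{(n)}$ is indecomposable.

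For part~\eqref{lemnn5.2}, I would use part~\eqref{lemnn5.1} to reduce the automorphism question to finite dimensions. The composition $\gamma:=\alpha\circ\beta\in\mathrm{end}_A(T(\lambda))$ restricts to an endomorphism $\gamma_n$ of each $N^{(n)}$ by~\eqref{lemnn5.1.3}, and by construction $\gamma$ is the identity on $\Delta(\lambda)=N^{(n),(0)}/N^{(n),(1)}$. Since $N^{(n)}$ is indecomposable and finite dimensional, $\mathrm{end}_A(N^{(n)})$ is local, so $\gamma_n$ is either nilpotent or an automorphism; it cannot be nilpotent because it acts as the identity on the standard subquotient $\Delta(\lambda)$ (a power of a nilpotent endomorphism would kill this subquotient, using that $\gamma_n$ preserves the standard filtration up to the usual arguments). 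Hence each $\gamma_n$ is an automorphism of $N^{(n)}$. Because $N^{(n)}_i=T(\lambda)_i$ for $i\leq n$ and the $N^{(n)}$ exhaust $T(\lambda)$ as $n\to\infty$ (their union is all of $T(\lambda)$ since every graded component eventually lies in some $N^{(n)}$), the map $\gamma$ is injective (its kernel meets each $N^{(n)}$ in $\ker\gamma_n=0$) and surjective (its image contains each $N^{(n)}=\gamma_n(N^{(n)})\subseteq\gamma(T(\lambda))$). Therefore $\gamma=\alpha\circ\beta$ is an automorphism of $T(\lambda)$.

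The main obstacle I anticipate is the bookkeeping in~\eqref{lemnn5.1.1}: making precise that the submodule generated by the bottom degrees of $T(\lambda)$ is itself standard-filtered with the filtration beginning at $\Delta(\lambda)$, rather than just being some submodule of a standard-filtered module. The cleanest route is to invoke the characterization in Theorem~\ref{thm2}\eqref{thm2-1}: one checks $\mathrm{ext}^1_A(N^{(n)},\overline{\nabla}(\mu)\langle j\rangle)=0$ for all $\mu,j$ by comparing with the corresponding vanishing for $T(\lambda)$ and for $T(\lambda)/N^{(n)}$ (the latter being a quotient whose low-degree vanishing is automatic because $\overline{\nabla}(\mu)$ is finite dimensional, Lemma~\ref{lem4}, so $\overline{\nabla}(\mu)\langle j\rangle$ is concentrated in bounded degrees and $T(\lambda)/N^{(n)}$ lives in high degrees once a dimension-shift argument against the finite projective resolution pieces is set up), and then finiteness of the filtration follows since $N^{(n)}$ is finite dimensional. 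That $\Delta(\lambda)$ is the bottom layer follows from $\mathfrak{b}(N^{(n)})=\mathfrak{b}(T(\lambda))=\mathfrak{b}(\Delta(\lambda))$ and the compatibility of the filtration with the grading.
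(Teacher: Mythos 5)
Your construction of $N^{(n)}$ — the submodule of $T(\lambda)$ generated by $\bigoplus_{i\le n}T(\lambda)_i$ — differs from the paper's, which instead builds a finite submultiset $\mathcal{N}$ of the multiset of standard subquotients of $T(\lambda)$, closed under certain extension and homomorphism conditions via Lemmas~\ref{lem9} and~\ref{lem10}, and then produces $N^{(n)}$ as the submodule realizing $\mathcal{N}$. Your observation for \eqref{lemnn5.1.3} (degree-zero endomorphisms preserve each graded piece, hence the submodule they generate) is clean and correct for your choice of $N^{(n)}$. The gap is in \eqref{lemnn5.1.1}, and it propagates into part \eqref{lemnn5.2}.

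Two concrete problems. First, the assertion that $N^{(n)}$ is finite dimensional is false: in Example~\ref{exm2} with $\lambda=2$, the submodule of $T(2)$ generated by its lowest graded piece is (a shift of) $P(1)$, which is infinite dimensional. You use this finite dimensionality both to conclude finiteness of the standard filtration and, in part (ii), to get locality of $\mathrm{end}_A(N^{(n)})$; the paper instead derives finite dimensionality of $\mathrm{end}_A(N^{(n)})$ from the finite standard filtration and the finite dimensionality of $\mathrm{hom}_A(\Delta(\mu),\Delta(\nu)\langle j\rangle)$, which is the step you would need to replace. Second, the proposed route to $N^{(n)}\in\mathcal{F}^{\downarrow}(\Delta)$ via Theorem~\ref{thm2}\eqref{thm2-1} is not closed: from the short exact sequence $N^{(n)}\hookrightarrow T(\lambda)\tto Q$ with $Q=T(\lambda)/N^{(n)}$ one gets $\mathrm{ext}^1_A(N^{(n)},\overline{\nabla}(\mu)\langle j\rangle)\hookrightarrow \mathrm{ext}^2_A(Q,\overline{\nabla}(\mu)\langle j\rangle)$, and the degree argument (``$Q$ lives in degrees $>n$'') kills the right-hand side only when $\overline{\nabla}(\mu)\langle j\rangle$ is supported strictly below $\mathfrak{b}(Q)$, i.e.\ for $j\gg 0$; for $j\ll 0$ nothing forces $\mathrm{ext}^2_A(Q,\overline{\nabla}(\mu)\langle j\rangle)$ to vanish, since $Q$ is merely a quotient of a tilting module with no a priori good filtration. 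And even if the ext-vanishing were established, membership in $\mathcal{F}^{\downarrow}(\Delta)$ only gives a finite filtration with pieces in $\mathcal{C}(\Delta)$, which may each carry infinitely many standard subquotients, so finiteness of the standard filtration would still require a further argument (the paper gets it for free because $\mathcal{N}$ is built to be finite). These points need to be repaired before the reduction to the finite-dimensional local endomorphism ring in part \eqref{lemnn5.2} can be carried out.
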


\begin{proof}
Consider the multiset $\mathcal{M}$ of all standard subquotients 
of $T(\lambda)$. It might be infinite. However, for every $m\in\mathbb{Z}$ 
the multiset $\mathcal{M}_m$ of those subquotients $X$ of $T(\lambda)$,
for which $X_i\neq 0$ for some $i\leq m$ is finite since $T(\lambda)\in
A^{\downarrow}\text{-}\mathrm{mod}$. Construct the submultiset 
$\mathcal{N}$ of $\mathcal{M}$ in the following way: start with 
$\mathcal{M}_n\cup\{\Delta(\lambda)\}$, 
which is finite. From Lemma~\ref{lem9} it follows that 
every subquotient from  $\mathcal{M}_n$ has a nonzero first extension 
with finitely many other subquotients from  $\mathcal{M}$. Add 
to $\mathcal{N}$ all such subquotients (counted with multiplicities), 
moreover, if we add some $\Delta(\mu)\langle j\rangle$, add as well
all $\Delta(\nu)\langle i\rangle$, where $i\geq j$ and $\mu\preceq\nu$, 
occurring in $\mathcal{M}$. Obviously, the result will be a finite set. 
Repeat now the same procedure for all newly
added subquotients and continue. By Lemma~\ref{lem10}, on every next 
step we will add only $\Delta(\nu)\langle i\rangle$ such that 
$\mu\prec \nu$  (strict inequality!) for some minimal $\mu$ in the set 
indexing subquotients added on the previous
step. 

As $\Lambda$ is finite, after finitely many steps 
we will get a finite submultiset $\mathcal{N}$ of $\mathcal{M}$ 
with the following properties:
any subquotient from $\mathcal{N}$ does not extend any subquotient
from $\mathcal{M}\setminus\mathcal{N}$; there are no 
homomorphisms from any subquotient from $\mathcal{N}$ to any subquotient
from $\mathcal{M}\setminus\mathcal{N}$. Using the vanishing of the
first extension one shows that there is a submodule $N^{(n)}$  of
$T(\Lambda)$, which has a standard filtration with the multiset of subquotients
being precisely $\mathcal{N}$, in particular, $N^{(n)}$ satisfies 
\eqref{lemnn5.1.1}. By construction, $N^{(n)}$ also satisfies 
\eqref{lemnn5.1.2}. The vanishing of homomorphisms from 
subquotients from $\mathcal{N}$ to subquotients
from $\mathcal{M}\setminus\mathcal{N}$ implies that $N^{(n)}$ satisfies 
\eqref{lemnn5.1.3}. That $N^{(n)}$ satisfies \eqref{lemnn5.1.0} is proved
similarly to the proof of the indecomposability of $T(\lambda)$.
This proves statement \eqref{lemnn5.1}.

To prove that $\alpha\circ\beta$ is an automorphism 
(statement \eqref{lemnn5.2}) it is enough to show that for any
$n\in\mathbb{Z}$ the restriction of $\alpha\circ\beta$ to
$T(\lambda)_n$ is a linear automorphism.
The restriction of $\alpha\circ\beta$ to  $N^{(n)}$ (which is well defined 
by \eqref{lemnn5.1.3}) is not nilpotent as it is the
identity on $\Delta(\lambda)$. As $A$ is positively graded, the space
$\mathrm{hom}_A(\Delta(\mu),\Delta(\nu)\langle j\rangle)$ is finite
dimensional for all $\mu,\nu$ and $j$. From this observation and
\eqref{lemnn5.1.1} it follows that the endomorphism
algebra of $N^{(n)}$ is finite dimensional. This algebra is local
by \eqref{lemnn5.1.0}. Therefore the restriction of 
$\alpha\circ\beta$ to  $N^{(n)}$, being a non-nilpotent element of
a local finite dimensional algebra, is an automorphism. Therefore
the restriction of $\alpha\circ\beta$ to all $N^{(n)}_i$, in particular,
to $N^{(n)}_n=T(\lambda)_n$ (see \eqref{lemnn5.1.2}), is a linear
automorphism. This completes the proof.
\end{proof}

After Lemma~\ref{lemnn5},
substituting  $\alpha$ by $(\alpha\circ\beta)^{-1}\circ\alpha$, we may
assume that $\alpha\circ\beta=\mathrm{id}_{T(\lambda)}$. We also have that
$\beta$ is injective and $\alpha$ is surjective. The gives us splittings
for the following two short exact sequences:
\begin{gather*}
\xymatrix{
0\ar[rr]&&\mathrm{Ker}(\alpha)\ar@{^{(}->}[rr] &&M\ar[rr]_{\alpha}
&&T(\lambda)\ar[rr]\ar@/_1pc/@{-->}[ll]_{\beta}&&0
}
\\ 
\xymatrix{
0\ar[rr]&&T(\lambda)\ar[rr]_{\beta}&&M\ar@{->>}[rr]
\ar@/_1pc/@{-->}[ll]_{\alpha}&&\mathrm{Coker}(\beta)\ar[rr]&&0
}
\end{gather*}
As $M$ is assumed to be indecomposable, we obtain 
$\mathrm{Ker}(\alpha)=\mathrm{Coker}(\beta)=0$, which implies
that $\alpha$ and $\beta$ are isomorphisms. Therefore $M\cong T(\lambda)$,
which completes the proof of the theorem.
\end{proof}

The objects of the category $\mathcal{F}^{\downarrow}(\Delta)\cap 
\mathcal{F}^{\downarrow}(\overline{\nabla})$ are called
{\em tilting modules}. 

\begin{remark}\label{rem16}
{\rm  
Note that a tilting module may be an infinite
direct sum of indecomposable tilting modules. 
Note also that the direct sum of all indecomposable tilting
modules (with all shifts) does not belong 
to $A^{\downarrow}\text{-}\mathrm{gmod}$. It might happen that it
does not belong to $A\text{-}\mathrm{gmod}$ either, since local 
finiteness is an issue.
}
\end{remark}

\begin{corollary}\label{cor15}
Let $A$ be a positively graded standardly stratified algebra.
\begin{enumerate}[(i)]
\item\label{cor15-1}
Every $M\in \mathcal{F}^{\downarrow}(\Delta)$ has a coresolution by 
tilting modules of length at most $|\Lambda|-1$. 
\item\label{cor15-2}
Every $M\in \mathcal{F}^{\downarrow}(\overline{\nabla})$ has a 
(possibly infinite) resolution by tilting modules.
\end{enumerate}
\end{corollary}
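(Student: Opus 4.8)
The plan is to prove the two statements of Corollary~\ref{cor15} by the standard ``Ringel-type'' resolution/coresolution arguments, using that tilting modules are exactly the objects of $\mathcal{F}^{\downarrow}(\Delta)\cap\mathcal{F}^{\downarrow}(\overline{\nabla})$ together with the construction of $T(\lambda)$ in Theorem~\ref{thm8} and the homological characterizations in Theorem~\ref{thm2}. For \eqref{cor15-1}, given $M\in\mathcal{F}^{\downarrow}(\Delta)$, I would build a tilting coresolution starting from a short exact sequence $M\hookrightarrow T_0\tto C_0$ with $T_0$ tilting and $C_0\in\mathcal{F}^{\downarrow}(\Delta)$; then iterate on $C_0$. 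The key point for finiteness of length is to track a ``highest weight'' invariant: if $M$ has a standard filtration using only indices $\lambda$ with $\overline\lambda$ lying in some down-set of $\overline\Lambda$, then $C_0$ can be arranged to have a standard filtration using strictly fewer equivalence classes (or only strictly smaller ones), so after at most $|\Lambda|-1$ steps the cokernel is itself tilting and the coresolution terminates.

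Concretely, first I would produce $T_0$: for each standard subquotient $\Delta(\lambda)\langle j\rangle$ occurring in a (finite, if $M\in\mathcal F^b(\Delta)$; in general reduce to this by the first part of the standard-filtration bookkeeping used in Lemma~\ref{lemnn7}) filtration of $M$, use the corresponding $T(\lambda)\langle j\rangle$ from Theorem~\ref{thm8}\eqref{thm8-2}, which comes with $\Delta(\lambda)\hookrightarrow T(\lambda)\tto \mathrm{Coker}$, $\mathrm{Coker}\in\mathcal F^{\downarrow}(\Delta)$. Splicing these together along the filtration of $M$ (using $\mathrm{ext}^1_A(\mathcal F^{\downarrow}(\Delta),\mathcal F^{\downarrow}(\overline\nabla))=0$ from Lemma~\ref{lemnn7} to lift the maps) yields a short exact sequence $M\hookrightarrow T_0\tto C_0$ with $T_0$ a (possibly infinite) direct sum of indecomposable tiltings, hence tilting by Theorem~\ref{thm8}\eqref{thm8-1}, and with $C_0\in\mathcal F^{\downarrow}(\Delta)$. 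Crucially, since each $T(\lambda)$ has a standard filtration whose top is $\Delta(\lambda)$ and whose remaining subquotients are $\Delta(\mu)\langle i\rangle$ with $\mu\prec\lambda$ (as is visible from the universal-extension construction, where on each step one only adds $\Delta(\nu)\langle i\rangle$ with strictly smaller index), the cokernel $C_0$ has a standard filtration involving only indices strictly below those appearing in $M$. Then $\mathrm{ext}^1_A(M',C_0)=0$ for all $M'\in\mathcal F^{\downarrow}(\Delta)\cap\mathcal F^{\downarrow}(\overline\nabla)$ by Lemma~\ref{lemnn7}, and I iterate: $C_0\hookrightarrow T_1\tto C_1$, etc. Since the set of indices involved strictly decreases in the preorder at each step and $\Lambda$ is finite, after at most $|\Lambda|-1$ steps the remaining cokernel has empty ``extension problem'', i.e.\ lies already in $\mathcal F^{\downarrow}(\overline\nabla)$ by Theorem~\ref{thm2}\eqref{thm2-2}, hence is tilting, and the coresolution $0\to M\to T_0\to T_1\to\cdots\to T_{|\Lambda|-1}\to 0$ terminates.

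For \eqref{cor15-2}, given $M\in\mathcal F^{\downarrow}(\overline\nabla)$, I would dually build a tilting resolution $\cdots\to T_1\to T_0\tto M\to 0$. Here one picks, for each proper costandard subquotient $\overline\nabla(\lambda)\langle j\rangle$ of $M$, the tilting module $T(\lambda)\langle j\rangle$, which surjects onto $\overline\nabla(\lambda)$ with kernel in $\mathcal F^{\downarrow}(\overline\nabla)$ (the dual half of the structure of $T(\lambda)$: its proper costandard filtration has socle $\overline\nabla(\lambda)$). Assembling these gives $T_0\tto M$ with $T_0$ tilting and kernel $\Omega M\in\mathcal F^{\downarrow}(\overline\nabla)$; using $\mathrm{ext}^1_A(\mathcal F^{\downarrow}(\Delta),\mathcal F^{\downarrow}(\overline\nabla))=0$ (Lemma~\ref{lemnn7}) this is a genuine short exact sequence of $\overline\nabla$-filtered modules. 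Then iterate on $\Omega M$. Unlike in \eqref{cor15-1} there is no a priori termination — the proper costandard filtration of $M$ may be infinite and the index bookkeeping does not force a decrease — so the resolution is merely possibly infinite, which is exactly what is claimed; the only thing to check is that the construction stays inside $A^{\downarrow}\text{-}\mathrm{gmod}$ and $\mathcal F^{\downarrow}(\overline\nabla)$, which follows from Lemma~\ref{lem4} (finite-dimensionality of proper costandards) and Theorem~\ref{thm2}\eqref{thm2-2}.

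I expect the main obstacle to be the bookkeeping that makes the coresolution in \eqref{cor15-1} stop after $|\Lambda|-1$ steps: one must verify carefully that passing from $M$ to the cokernel $C_0$ genuinely strictly shrinks the set (or the ``top layer'') of weights occurring in a standard filtration, which relies on the precise shape of the standard filtration of $T(\lambda)$ coming out of the universal-extension construction in the proof of Theorem~\ref{thm8} (only strictly smaller indices get added). The potentially-infinite filtrations also require care: strictly speaking one should first reduce to $M\in\mathcal F^b(\Delta)$ (resp. handle the infinite case by the same truncation-at-degree-$n$ argument used in Corollary~\ref{corlem5} and Lemma~\ref{lemnn7}), so that the finitely many ``steps'' and the length bound make sense; I would phrase the argument so that the length bound $|\Lambda|-1$ refers to the number of distinct layers of the preorder that can be peeled off, independently of whether individual standard filtrations are finite.
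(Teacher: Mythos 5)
Your approach is the one the paper has in mind; the paper's proof is a single sentence (``follows from Theorem~\ref{thm8} and the definitions by standard arguments''), and you fill in exactly the Ringel-style approximation machinery that this phrase refers to: for~\eqref{cor15-1}, the horseshoe/universal-extension splice along a standard filtration, with the cokernel dropping strictly in the preorder because the construction of $T(\lambda)$ in Theorem~\ref{thm8}\eqref{thm8-2} adds only $\Delta(\mu)\langle i\rangle$ with $\mu\prec\lambda$, plus the layer-peeling trick from Lemma~\ref{lemnn7} to handle a potentially infinite $\Delta$-filtration.

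Two imprecisions worth flagging. First, a terminological one: $\Delta(\lambda)$ sits at the \emph{bottom} of the standard filtration of $T(\lambda)$ (as a submodule), not at the top; and in \eqref{cor15-2} the proper costandard filtration of $T(\lambda)$ has $\overline\nabla(\lambda)$ as its \emph{top quotient} $T(\lambda)\twoheadrightarrow\overline\nabla(\lambda)$, not as its socle. Second, and more substantively: for~\eqref{cor15-2} you invoke ``the dual half of the structure of $T(\lambda)$,'' i.e.\ the existence of a short exact sequence $K\hookrightarrow T(\lambda)\twoheadrightarrow\overline\nabla(\lambda)$ with $K\in\mathcal F^{\downarrow}(\overline\nabla)$. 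Theorem~\ref{thm8} only establishes the $\Delta$-side short exact sequence; the $\overline\nabla$-side is a genuine additional statement. It is indeed true and standard in the finite-dimensional stratified case (cf.\ \cite{AHLU,Fr}), but since the graded duality $\circledast$ swaps $A^{\downarrow}$-gmod with $A^{\uparrow}$-gmod and $\Delta$ with $\nabla$ (not $\overline\nabla$), it does not follow formally by dualizing Theorem~\ref{thm8}; one either repeats the universal (co)extension argument on the $\overline\nabla$-side, or observes that once one has \emph{some} surjection $X\twoheadrightarrow M$ with $X\in\mathcal F^{\downarrow}(\Delta)$ and $\overline\nabla$-filtered kernel, then $X$ is automatically in $\mathcal F^{\downarrow}(\overline\nabla)$ (by Theorem~\ref{thm2}\eqref{thm2-2} and dimension shift) and hence tilting. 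Your write-up treats this as given; in a self-contained version it should be stated and briefly justified, since it is the crux of \eqref{cor15-2}.
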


\begin{proof}
This follows from Theorem~\ref{thm8} and
the definitions by standard arguments.
\end{proof}

\begin{remark}\label{rem14}
{\rm  
Note that the standard filtration of $T(\lambda)$ may be infinite,
see Example~\ref{exm2}.
}
\end{remark}

Unfortunately, Remark~\ref{rem14} says that one can not hope for
a reasonable analogue of Ringel duality on the class of algebras we
consider. We can of course consider the  endomorphism 
algebra of the direct  sum of all tilting modules, but from 
Remark~\ref{rem14} it follows that projective modules over such 
algebras might have infinite standard filtrations and hence we will
not be able to construct tilting modules for them.
Another obstruction is that we actually can not guarantee that the 
induced grading on this endomorphism algebra will be positive
(see examples in \cite{MO,Ma5}). To deal with these problems we have
to introduce some additional restrictions.

\section{Ringel duality for graded standardly stratified 
algebras}\label{s31}

Consider the $\Bbbk$-linear category $\mathfrak{T}$, which is the
full subcategory of $A^{\downarrow}\text{-}\mathrm{gmod}$, whose objects
are $T(\lambda)\langle j\rangle$, where $\lambda\in\Lambda$ and 
$j\in\mathbb{Z}$. The group $\mathbb{Z}$ acts freely on $\mathfrak{T}$ via
$\langle j\rangle$ and the quotient of $\mathfrak{T}$ modulo
this free action is a $\mathbb{Z}$-graded $\Bbbk$-linear 
category $\overline{\mathfrak{T}}$, whose objects can be identified
with $T(\lambda)$, where $\lambda\in\Lambda$ 
(see \cite{DM,MOS} for more details).
Thus the ungraded endomorphism algebra $R(A)=\mathrm{End}_A(T)$, 
where  $T=\bigoplus_{\lambda\in\Lambda}T(\lambda)$ becomes a
$\mathbb{Z}$-graded  $\Bbbk$-algebra in the
natural way. The algebra $R(A)$ is called the {\em Ringel dual} of $A$.
The algebra $A$ will be called {\em weakly adapted} provided 
that every $T(\lambda)$, where $\lambda\in\Lambda$, has a finite standard 
filtration.  The algebra $A$ will be called {\em adapted} provided that
the above $\mathbb{Z}$-grading on $R(A)$ is positive.

\begin{proposition}\label{prop31}
We have the following:
\begin{enumerate}[(i)]
\item\label{prop31.1} Any adapted algebra is weakly adapted.
\item\label{prop31.2} If $A$ is weakly adapted, then $R(A)$ is locally 
finite.
\end{enumerate}
\end{proposition}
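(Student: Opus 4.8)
\textbf{Proof proposal for Proposition~\ref{prop31}.}

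The plan is to handle the two statements separately, with \eqref{prop31.1} being essentially a bookkeeping argument and \eqref{prop31.2} requiring the grading to control dimensions. For \eqref{prop31.1}, suppose $A$ is adapted, so the induced $\mathbb{Z}$-grading on $R(A)=\mathrm{End}_A(T)$ is positive, meaning it is concentrated in nonnegative degrees with degree-zero part a product of copies of $\Bbbk$ indexed by $\Lambda$. I would translate the positivity of the grading into a statement about $\mathrm{hom}_A(T(\lambda),T(\mu)\langle j\rangle)$: positivity forces these spaces to vanish for $j<0$, and to be one-dimensional (scalar multiples of the identity) for $j=0$ when $\lambda=\mu$ and zero when $\lambda\neq\mu$. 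I would then argue that this vanishing, combined with the fact (from Corollary~\ref{cor11}\eqref{cor11-1}) that the multiplicity of $\Delta(\lambda)\langle j\rangle$ in a standard filtration of $T(\mu)$ equals $\dim\mathrm{hom}_A(T(\mu),\overline{\nabla}(\lambda)\langle j\rangle)$, bounds the standard filtration: since $\overline{\nabla}(\lambda)$ is a quotient of $\Delta(\lambda)$ with the same top, a nonzero map $T(\mu)\to\overline{\nabla}(\lambda)\langle j\rangle$ produces, via projectivity considerations and the structure of tilting modules, a corresponding map into a shift of $T(\lambda)$, so the shifts $j$ that can occur are bounded below by $0$ (or some fixed constant), and since they are also bounded above (as $T(\mu)\in A^{\downarrow}\text{-}\mathrm{gmod}$ and proper costandard modules are finite dimensional by Lemma~\ref{lem4}), only finitely many shifts occur, each with finite multiplicity; hence the standard filtration of every $T(\mu)$ is finite. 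This is exactly weak adaptedness.

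For \eqref{prop31.2}, I need to show that if every $T(\lambda)$ has a finite standard filtration, then each graded component $R(A)_j=\bigoplus_{\lambda,\mu\in\Lambda}\mathrm{hom}_A(T(\lambda),T(\mu)\langle j\rangle)$ is finite dimensional. The key point is that $\mathrm{hom}_A(T(\lambda),T(\mu)\langle j\rangle)$ is finite dimensional for each fixed $j$. To see this, I would use the finite standard filtration of $T(\lambda)$: applying $\mathrm{hom}_A(-,T(\mu)\langle j\rangle)$ to this filtration and using that $T(\mu)\in\mathcal{F}^{\downarrow}(\overline{\nabla})$ together with Lemma~\ref{lemnn7} (which gives vanishing of higher extensions between objects of $\mathcal{F}^{\downarrow}(\Delta)$ and $\mathcal{F}^{\downarrow}(\overline{\nabla})$), I get a filtration of $\mathrm{hom}_A(T(\lambda),T(\mu)\langle j\rangle)$ whose subquotients are spaces of the form $\mathrm{hom}_A(\Delta(\nu)\langle i\rangle,T(\mu)\langle j\rangle)$; and by Corollary~\ref{cor11}\eqref{cor11-2} each such space has dimension equal to the (finite) multiplicity of $\overline{\nabla}(\nu)\langle i-j\rangle$ in a proper costandard filtration of $T(\mu)$. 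Since the standard filtration of $T(\lambda)$ is finite, only finitely many $\Delta(\nu)\langle i\rangle$ occur, so $\mathrm{hom}_A(T(\lambda),T(\mu)\langle j\rangle)$ is finite dimensional. Summing over the finite set $\Lambda\times\Lambda$ gives $\dim_{\Bbbk}R(A)_j<\infty$, which is local finiteness.

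The main obstacle I expect is in \eqref{prop31.1}: making precise the passage from "the grading on $R(A)$ is positive" to "the shifts occurring in the standard filtration of $T(\mu)$ are bounded below." The subtlety is that positivity of the grading is a statement about $\mathrm{hom}$-spaces between tilting modules, whereas standard-filtration multiplicities are governed by $\mathrm{hom}$-spaces into proper costandard modules; bridging these requires carefully relating maps $T(\mu)\to T(\lambda)\langle j\rangle$ to maps $T(\mu)\to\overline{\nabla}(\lambda)\langle j\rangle$, using that $\Delta(\lambda)\hookrightarrow T(\lambda)$ with cokernel in $\mathcal{F}^{\downarrow}(\Delta)$ and that $\overline{\nabla}(\lambda)$ is the maximal quotient of $\Delta(\lambda)$ in a suitable sense. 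An alternative, possibly cleaner route for \eqref{prop31.1}: argue directly that if some $T(\mu)$ had an infinite standard filtration, then (as in Remark~\ref{rem14} and Example~\ref{exm2}) the shifts would be unbounded below, forcing $\mathrm{hom}_A(T(\nu),T(\mu)\langle j\rangle)\neq 0$ for arbitrarily negative $j$ and suitable $\nu$, contradicting positivity of the grading on $R(A)$. Everything else is a routine assembly of the finiteness statements already established in Sections~\ref{s3} and this section.
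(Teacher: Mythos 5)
Your overall strategy for both parts matches the paper's, and part \eqref{prop31.2} is correct (the paper is actually slightly more direct: since $\Delta(\nu)$ is cyclic, generated in a fixed degree, $\mathrm{hom}_A(\Delta(\nu)\langle i\rangle,M)$ is automatically finite dimensional for any locally finite $M$, so there is no need to invoke Corollary~\ref{cor11}\eqref{cor11-2}; but your detour through proper costandard multiplicities is also valid).

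For \eqref{prop31.1}, however, you rely twice on the claim that ``$\overline{\nabla}(\lambda)$ is a quotient of $\Delta(\lambda)$'' (and later ``the maximal quotient of $\Delta(\lambda)$ in a suitable sense''), and this is false: $\Delta(\lambda)$ has simple top $L(\lambda)$, whereas $\overline{\nabla}(\lambda)$ has simple \emph{socle} $L(\lambda)$; they only share the simple $L(\lambda)$, and neither is a quotient of the other in general. This is exactly the bridge you flag as the main obstacle, and your proposed span does not close it. The correct bridge — the one the paper uses, phrased as ``$\overline{\nabla}(\mu)\langle j\rangle$ is a (sub)quotient of $T(\mu)\langle j\rangle$'' — is the dual structure of the tilting module: there is a surjection $T(\mu)\tto\overline{\nabla}(\mu)$ whose kernel lies in $\mathcal{F}^{\downarrow}(\overline{\nabla})$. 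Since $T(\lambda)\in\mathcal{F}^{\downarrow}(\Delta)$, Lemma~\ref{lemnn7} gives $\mathrm{ext}^1_A(T(\lambda),\mathrm{Ker})=0$, so any nonzero map $T(\lambda)\to\overline{\nabla}(\mu)\langle j\rangle$ (which Corollary~\ref{cor11}\eqref{cor11-1} produces from a standard subquotient $\Delta(\mu)\langle j\rangle$ of $T(\lambda)$) lifts to a nonzero map $T(\lambda)\to T(\mu)\langle j\rangle$. That lifted map is the element of $R(A)$ in the forbidden degree, giving the contradiction with positivity. Your ``alternative cleaner route'' is just the contrapositive of the same argument and needs the identical bridge. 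Once you replace $\Delta(\lambda)\tto\overline{\nabla}(\lambda)$ with $T(\lambda)\tto\overline{\nabla}(\lambda)$, your proof of \eqref{prop31.1} coincides with the paper's.
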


\begin{proof}
Because of Lemma~\ref{lem5} and the definition of tilting modules,
every homomorphism from $T(\lambda)$ to $T(\mu)\langle j\rangle$ is
induced from a homomorphism from some standard subquotient of
$T(\lambda)$ to some proper standard subquotient of 
$T(\mu)\langle j\rangle$.

Since $\overline{\nabla}(\mu)\langle j\rangle$ is a (sub)quotient of 
$T(\mu)\langle j\rangle$, the condition that 
the above $\mathbb{Z}$-grading on $R(A)$ is positive implies that 
every standard subquotient of $T(\lambda)$, different from 
$\Delta(\lambda)$, must have the form $\Delta(\mu)\langle j\rangle$
for some $j>0$. However, the vector space 
$\displaystyle \bigoplus_{j\leq 0}T(\lambda)_j$ is finite dimensional
as $T(\lambda)\in A^{\downarrow}\text{-}\mathrm{gmod}$, which
yields that any standard filtration of $T(\lambda)$ must be finite.
This proves statement \eqref{prop31.1}. 

Statement \eqref{prop31.2} follows from the finiteness of a standard
filtration of $T(\lambda)$ and the obvious fact that 
$\mathrm{hom}_A(\Delta(\lambda),M)$ is finite dimensional
for any $M\in A\text{-}\mathrm{gmod}$.
\end{proof}

\begin{corollary}\label{cor301}
Assume that $A$ is adapted. Then every $M\in\mathcal{F}^b(\Delta)$,
in particular, every indecomposable projective
$A$-module, has a finite coresolution
\begin{equation}\label{eq333}
0\to M\to T_0\to T_1\to\dots \to T_k\to 0, 
\end{equation}
such that every $T_i$ is a finite direct sum of indecomposable tilting
$A$-modules.
\end{corollary}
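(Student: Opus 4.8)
The plan is to combine the tilting coresolution guaranteed by Corollary~\ref{cor15}\eqref{cor15-1} with the hypothesis that $A$ is adapted to upgrade an a priori possibly infinite-looking coresolution into the desired finite one with finite-rank terms. First I would note that every indecomposable projective $P(\lambda)$ lies in $\mathcal{F}^{b}(\Delta)$: this is immediate from the definition of a standardly stratified algebra together with the fact that $K(\lambda)$ has a \emph{finite} standard filtration, so $P(\lambda)$ itself has a finite standard filtration. More generally, any $M\in\mathcal{F}^{b}(\Delta)$ has, by definition, a finite filtration with subquotients that are themselves finitely filtered by shifted standard modules, hence $M$ has a finite standard filtration; in particular $M\in A^{\downarrow}\text{-}\mathrm{gmod}$ and $\mathfrak{b}(M)>-\infty$.

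Next I would run the universal-coextension construction from the proof of Theorem~\ref{thm8}, or equivalently invoke Corollary~\ref{cor15}\eqref{cor15-1}, to obtain a coresolution $0\to M\to T_0\to T_1\to\cdots\to T_k\to 0$ of length $k\le|\Lambda|-1$ by tilting modules, where each cokernel $C_i=\mathrm{Coker}(T_{i-1}\to T_i)$ lies in $\mathcal{F}^{\downarrow}(\Delta)$. The only thing that needs checking is that each $T_i$ is a \emph{finite} direct sum of indecomposable tiltings. Here is where adaptedness enters. I would argue by induction along the coresolution that every $C_i$ (with $C_{-1}:=M$) lies in $\mathcal{F}^{b}(\Delta)$, i.e. has a \emph{finite} standard filtration. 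Given $C_{i-1}\in\mathcal{F}^{b}(\Delta)$, the module $T_i$ is built as a finite iterated universal extension of $C_{i-1}$ by shifted standard modules $\Delta(\mu)\langle j\rangle$; since $A$ is adapted, Proposition~\ref{prop31}\eqref{prop31.1} tells us each $T(\mu)$ has a \emph{finite} standard filtration, and since $C_{i-1}$ has finite standard length and each such $T(\mu)$ is attached finitely many times (the relevant $\mathrm{ext}^1$ spaces are finite dimensional and, by positivity of the grading on $R(A)$, only finitely many shifts $j$ can occur), the resulting $T_i$ has a finite standard filtration and is therefore a finite direct sum of indecomposable tiltings. Then $C_i$, being the cokernel of an inclusion between two modules with finite standard filtrations, again has a finite standard filtration, closing the induction.

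The key point — and the step I expect to be the main obstacle to state cleanly — is the bookkeeping that keeps the standard filtrations \emph{finite} at each stage: one must be sure that in passing from $C_{i-1}$ to $T_i$ one does not introduce infinitely many standard subquotients. This is exactly what adaptedness buys, via the argument in the proof of Proposition~\ref{prop31}: positivity of the $\mathbb{Z}$-grading on $R(A)$ forces every standard subquotient of $T(\mu)$ other than $\Delta(\mu)$ to be of the form $\Delta(\nu)\langle j\rangle$ with $j>0$, while membership in $A^{\downarrow}\text{-}\mathrm{gmod}$ bounds $\mathfrak{b}$ from below; together these force finiteness. So the whole proof is really: (1) $\mathcal{F}^{b}(\Delta)$ is closed under the cokernels arising in the tilting coresolution, using adaptedness; (2) finite standard filtration of a tilting module forces it to be a finite direct sum of indecomposables; (3) the length bound $k\le|\Lambda|-1$ is inherited verbatim from Corollary~\ref{cor15}\eqref{cor15-1}. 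The statement about indecomposable projectives is then the special case $M=P(\lambda)$.
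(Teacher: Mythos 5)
Your overall strategy is sound, and the key insight (adaptedness forces the cokernels to keep a finite standard filtration via Proposition~\ref{prop31}\eqref{prop31.1}) is exactly the right lever. However, you take a genuinely different route from the paper's and the route as written has a gap worth naming. The paper first reduces the claim to $M=\Delta(\lambda)$ (the general $M\in\mathcal{F}^b(\Delta)$ case following by splicing/horseshoe over a finite standard filtration) and then inducts on $\lambda$ with respect to $\preceq$: from $0\to\Delta(\lambda)\to T(\lambda)\to\mathrm{Coker}\to 0$, adaptedness immediately gives $\mathrm{Coker}\in\mathcal{F}^b(\Delta)$ with indices $\mu\prec\lambda$, so the inductive hypothesis applies. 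You instead induct along the coresolution of the \emph{general} $M$ and justify finiteness of $T_i$ by saying it is a ``finite iterated universal extension of $C_{i-1}$ by shifted standard modules.'' That description misappropriates the construction of Theorem~\ref{thm8}, which builds the indecomposable $T(\lambda)$ out of a \emph{single} $\Delta(\lambda)$; there it terminates because $\mu$ strictly decreases along $\preceq$ at each stage and Lemma~\ref{lem9} controls the admissible shifts. For a general $C_{i-1}\in\mathcal{F}^b(\Delta)$ it is not clear that a universal extension procedure produces a finite tilting with finitely filtered cokernel: Lemma~\ref{lem9} only kills $\mathrm{ext}^1_A(\Delta(\mu)\langle j\rangle, C_{i-1})$ for $j\gg 0$, and nothing a priori stops infinitely many negative shifts from contributing, nor is it clear the output is a direct sum of $T(\mu)\langle j\rangle$'s. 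Your parenthetical appeal to ``positivity of the grading on $R(A)$'' is a property of the tilting modules $T(\mu)$, not of the extensions out of an arbitrary $C_{i-1}$, so it doesn't plug the hole as stated. The correct way to make your induction work is to build $T_i$ not by universal extensions of $C_{i-1}$ but by splicing together (via iterated pushouts / horseshoe) the individual embeddings $\Delta(\mu)\langle j\rangle\hookrightarrow T(\mu)\langle j\rangle$ over the finitely many standard subquotients of $C_{i-1}$; then finiteness of $T_i$ and of $C_i$ follows from the finiteness of each $T(\mu)$'s standard filtration (this is adaptedness). That repaired argument is really the paper's proof in disguise, just organized along the coresolution instead of along $\preceq$.
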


\begin{proof}
It is enough to prove the claim for $M=\Delta(\lambda)$. 
The claim is obvious in the case $\lambda$ is minimal as in this 
case we have $\Delta(\lambda)=T(\lambda)$. From
Theorem~\ref{thm8}\eqref{thm8-2} we have the
exact sequence
\begin{displaymath}
0\to  \Delta(\lambda)\to T(\lambda)\to \mathrm{Coker}
\end{displaymath}
such that $\mathrm{Coker}$ has a standard filtration with possible
subquotients $\Delta(\mu)\langle i\rangle$, where $\mu\prec\lambda$ and 
$i\in\mathbb{Z}$. By Proposition~\ref{prop31}\eqref{prop31.1}, 
the standard filtration of
$\mathrm{Coker}$ is finite and hence the claim follows by induction
(with respect to the partial preorder $\preceq$).
\end{proof}

A complex $\mathcal{X}^{\bullet}$ of $A$-modules is called {\em perfect} 
provided that it is bounded and every nonzero $\mathcal{X}^{i}$ is a direct 
sum of finitely many indecomposable modules.
Let $\mathcal{P}(A)$ denote the homotopy category of perfect complexes
of graded projective $A$-modules. As every 
indecomposable projective $A$-module has a finite standard filtration,
it follows by induction that $\mathcal{F}^b(\Delta)\subseteq \mathcal{P}(A)$. 
Consider the contravariant functor 
\begin{displaymath}
\mathrm{G}=\mathcal{R}\mathrm{hom}_A({}_-,\mathfrak{T})
\end{displaymath}
(see \cite{MOS} for details of hom-functors for $\Bbbk$-linear
categories). As we will see in Theorem~\ref{thm32}\eqref{thm32-3}, the
functor $\mathrm{G}$ is a functor from $\mathcal{P}(A)$ to 
$\mathcal{P}(R(A))$. To distinguish 
$A$ and $R(A)$-modules, if necessary, we will use $A$ and $R(A)$
as superscripts for the corresponding modules.

\begin{theorem}[Weak Ringel duality]
\label{thm32}
Let $A$ be an adapted standardly stratified algebra.
\begin{enumerate}[(i)]
\item\label{thm32-1}
The algebra $R(A)$ is an adapted standardly stratified algebra with
respect to $\preceq^{\mathrm{op}}$. 
\item\label{thm32-2}
We have $R(R(A))\cong A$.
\item\label{thm32-3}
The functor $\mathrm{G}$ is an antiequivalence from 
$\mathcal{P}(A)$ to $\mathcal{P}(R(A))$.
\item\label{thm32-4} 
The functor $\mathrm{G}$ induces an antiequivalence between 
$\mathcal{F}^{b}(\Delta^{(A)})$ and 
$\mathcal{F}^{b}(\Delta^{(R(A))})$, which sends standard
$A$-modules to standard $R(A)$-modules, tilting $A$-modules to 
projective $R(A)$-modules and projective $A$-modules
modules to tilting $R(A)$-modules.
\end{enumerate}
\end{theorem}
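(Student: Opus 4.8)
\textbf{Proof plan for Theorem~\ref{thm32}.}
The plan is to mimic the classical proof of Ringel duality, being careful about the infinite-dimensionality issues, and to use the tilting theory built in Sections~\ref{s3}--\ref{s31} as a black box. First I would recall, as noted in the proof of Proposition~\ref{prop31}, that every homomorphism between tilting modules factors through a standard-to-proper-costandard map; together with Corollary~\ref{cor11} this gives a combinatorial description of the graded spaces $\mathrm{hom}_A(T(\lambda),T(\mu)\langle j\rangle)$, which are finite dimensional by Proposition~\ref{prop31}\eqref{prop31.2}. The first real task is to identify the standard $R(A)$-modules. Since $A$ is adapted, each $T(\lambda)$ has a finite standard filtration starting (in the appropriate ordering) with $\Delta(\lambda)$ and continuing with $\Delta(\mu)\langle i\rangle$ for $\mu\prec\lambda$, $i>0$; applying $\mathrm{hom}_A(\bigoplus_\mu T(\mu),{}_-)$ to such a filtration and using Lemma~\ref{lemnn7} (vanishing of higher ext from $\Delta$'s to tilting modules) shows that $\mathrm{hom}_A(T,\Delta(\lambda))$ is naturally a submodule, in fact a standard-type quotient, of $\mathrm{hom}_A(T,T(\lambda))=P^{R(A)}(\lambda)$. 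This will be the candidate $\Delta^{R(A)}(\lambda)$, and its complement in $P^{R(A)}(\lambda)$ is filtered by $\mathrm{hom}_A(T,\Delta(\mu)\langle i\rangle)$ with $\mu\prec\lambda$, i.e.\ by standard modules with strictly larger index in $\preceq^{\mathrm{op}}$ — this gives the standard stratification of $R(A)$ with respect to $\preceq^{\mathrm{op}}$, and the positivity of the grading on $R(A)$ (adaptedness of $R(A)$) follows from the fact that only shifts $\langle i\rangle$ with $i>0$, or $i=0$ with $\mu=\lambda$, occur, combined with finiteness of the filtration. This establishes \eqref{thm32-1}.

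For \eqref{thm32-3} I would argue that $\mathrm{G}=\mathcal{R}\mathrm{hom}_A({}_-,\mathfrak{T})$ sends an indecomposable projective $P(\lambda)$ to a perfect complex of projective $R(A)$-modules: by Corollary~\ref{cor301}, $P(\lambda)$ has a finite coresolution by finite direct sums of indecomposable tiltings, so $\mathrm{G}(P(\lambda))$ is computed by applying $\mathrm{hom}_A({}_-,\mathfrak{T})$ termwise to this tilting coresolution, yielding a bounded complex whose terms are finite sums of $P^{R(A)}(\mu)$'s (because $\mathrm{hom}_A(T(\mu),\mathfrak{T})=P^{R(A)}(\mu)$ and higher exts among tiltings vanish). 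Hence $\mathrm{G}$ maps $\mathcal{P}(A)$ into $\mathcal{P}(R(A))$ and, being a derived hom-functor into the defining category $\mathfrak{T}$, it is fully faithful on the additive hull of $\mathfrak{T}$; a standard dévissage (every object of $\mathcal{P}(A)$ is built from the $P(\lambda)\langle j\rangle$, which $\mathrm{G}$ sends to a generating set of $\mathcal{P}(R(A))$, namely the tilting complexes, whose termwise duals recover the projectives) shows $\mathrm{G}$ is essentially surjective, hence an antiequivalence. Statement \eqref{thm32-2} then follows by symmetry: applying the same construction to $R(A)$ produces tilting $R(A)$-modules $T^{R(A)}(\lambda)=\mathrm{G}(P(\lambda))$, and $R(R(A))=\mathrm{End}_{R(A)}(\bigoplus T^{R(A)}(\lambda))\cong\mathrm{End}_A(\bigoplus P(\lambda))=A$ via the full faithfulness of $\mathrm{G}$; one must check the grading matches, which again comes down to the explicit shifts recorded above.

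Finally, \eqref{thm32-4} is essentially a bookkeeping consequence of the previous parts. The antiequivalence $\mathrm{G}$ restricts to $\mathcal{F}^b(\Delta^{(A)})$ because, as remarked after the statement of Corollary~\ref{cor301}, $\mathcal{F}^b(\Delta)\subseteq\mathcal{P}(A)$, and one checks $\mathrm{G}$ sends $\Delta(\lambda)$ to $\Delta^{R(A)}(\lambda)$ (this is exactly the identification made in the proof of \eqref{thm32-1}, now read as an isomorphism of complexes concentrated in degree zero — one needs that $\mathrm{ext}^i_A(\Delta(\lambda),T(\mu)\langle j\rangle)=0$ for $i>0$, which is Lemma~\ref{lemnn7}), sends $T(\lambda)=\mathrm{hom}_A(T(\lambda),T)^*$-style data to $P^{R(A)}(\lambda)$ (by definition of $R(A)$ and vanishing of higher ext among tiltings), and sends $P(\lambda)$ to $T^{R(A)}(\lambda)$ (by the coresolution of Corollary~\ref{cor301} together with the fact that $\mathrm{G}$ of a tilting coresolution of $P(\lambda)$ is a tilting resolution of $T^{R(A)}(\lambda)$ in $\mathcal{F}^b(\overline{\nabla}^{(R(A))})$, which pins it down by the dual of Corollary~\ref{cor15}). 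The main obstacle I anticipate is not any single deep step but the careful handling of gradings and of the possible infiniteness of the various filtrations: one must repeatedly invoke adaptedness to guarantee finiteness of standard filtrations of tiltings, use the grading to break infinite-dimensional hom-spaces into finite-dimensional homogeneous pieces (as in Lemma~\ref{lemnn5}), and verify that all the identifications of standard/tilting/projective modules are compatible with the $\mathbb{Z}$-grading rather than merely true up to shift — this is where the argument is most likely to hide a subtlety, and is presumably where the gap pointed out by the referee lay.
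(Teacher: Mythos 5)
Your overall architecture matches the paper's: use Corollary~\ref{cor301} to obtain finite tilting coresolutions of projectives, invoke the Rickard--Morita theorem to get the antiequivalence in part \eqref{thm32-3}, and identify $\mathrm{G}$ of standards/tiltings/projectives as standards/projectives/tiltings over $R(A)$. However, there are two genuine gaps in the places where the paper does real work.

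First, you never actually prove that $\mathrm{G}P(\lambda)$ is a \emph{tilting} $R(A)$-module, which is the linchpin of both \eqref{thm32-2} and \eqref{thm32-4}. You show $\mathrm{G}P(\lambda)\in\mathcal{F}^b(\Delta^{(R(A))})$ (since $P(\lambda)\in\mathcal{F}^b(\Delta^{(A)})$ and $\mathrm{G}$ is exact there), but the needed proper-costandard filtration of $\mathrm{G}P(\lambda)$ is only asserted, with a reference to ``the dual of Corollary~\ref{cor15}'' --- that corollary says modules with proper costandard filtrations admit tilting resolutions, which is the wrong direction and pins nothing down. The paper supplies the missing step explicitly: because $\mathrm{G}$ is a derived equivalence, $\mathrm{ext}^i_{R(A)}(\mathrm{G}\Delta(\lambda)\langle j\rangle,\mathrm{G}P(\mu))\cong\mathrm{ext}^i_A(P(\mu),\Delta(\lambda)\langle j\rangle)=0$ for $i>0$ by projectivity of $P(\mu)$, and then Theorem~\ref{thm2} forces $\mathrm{G}P(\mu)\in\mathcal{F}^{\downarrow}(\overline{\nabla}^{(R(A))})$, hence $\mathrm{G}P(\mu)$ is tilting. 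Without some version of this ext computation, your assertions ``$T^{R(A)}(\lambda)=\mathrm{G}P(\lambda)$'' and the ensuing computation $R(R(A))=\mathrm{End}(\bigoplus\mathrm{G}P(\lambda))\cong A$ are circular.

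Second, you misidentify what ``adapted'' means for $R(A)$ in part \eqref{thm32-1}. You argue that only shifts $\langle i\rangle$ with $i>0$ (or $i=0$, $\mu=\lambda$) occur in the standard filtrations of tiltings, and conclude ``positivity of the grading on $R(A)$ (adaptedness of $R(A)$).'' But positivity of the grading on $R(A)$ is by definition \emph{adaptedness of $A$}, which is the hypothesis, not something to prove; adaptedness of $R(A)$ is the positivity of the grading on $R(R(A))$. The paper obtains this precisely by first proving \eqref{thm32-2}, so that $R(R(A))\cong A$ carries $A$'s (positive) grading. Your plan proves $R(A)$ weakly adapted (finite standard filtrations of $T^{R(A)}(\lambda)$, once the tilting identification is fixed) but leaves full adaptedness unaddressed; it follows from \eqref{thm32-2}, which itself depends on the ext-vanishing step you omitted.
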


\begin{proof}
By construction, the functor $\mathrm{G}$ maps indecomposable tilting
$A$-modules to indecomposable projective $R(A)$-modules. From
Corollary~\ref{cor301} it follows that every indecomposable projective
$A$-module $M$ has a coresolution of the form \eqref{eq333},
such that every $T_i$ is a finite direct sum of indecomposable tilting
$A$-modules. This implies that every object in $\mathcal{P}(A)$ can
be represented by a perfect complex of tilting modules. 
This yields that $\mathrm{G}$ maps 
$\mathcal{P}(A)$ to $\mathcal{P}(R(A))$. As $T$ is a tilting module,
statement  \eqref{thm32-3} follows directly from the Rickard-Morita 
Theorem for $\Bbbk$-linear categories, see e.g. \cite[Corollary~9.2]{Ke}
or \cite[Theorem~2.1]{DM}.

The functor $\mathrm{G}$ is acyclic, in particular, exact on 
$\mathcal{F}^{b}(\Delta^{(A)})$ by  Lemma~\ref{lem5}. By construction, 
it maps tilting $A$-modules to projective $R(A)$-modules and thus projective
$R(A)$-modules have filtrations by images (under $\mathrm{G}$) of 
standard $A$-modules. By Proposition~\ref{prop31},
these filtrations of projective $R(A)$-modules by images 
of  standard $A$-modules are finite. As in the classical case (see \cite{Ri})
it is easy to see that the images of standard $A$-modules are
standard $R(A)$-modules (with respect to $\preceq^{\mathrm{op}}$). 
From Proposition~\ref{prop31}\eqref{prop31.2} and our assumptions
it follows that the algebra $R(A)$ is
positively graded. This implies that 
$R(A)$ is a graded standardly stratified algebra (with respect 
to $\preceq^{\mathrm{op}}$). 

Because of our description of standard modules for $R(A)$, the 
functor $\mathrm{G}$ maps $\mathcal{F}^{b}(\Delta^{(A)})$ to
$\mathcal{F}^{b}(\Delta^{(R(A))})$. In particular, projective
$A$-modules are also mapped to some modules in 
$\mathcal{F}^{b}(\Delta^{(R(A))})$. Since $\mathrm{G}$ is
a derived equivalence by \eqref{thm32-3}, for $i>0$, $j\in\mathbb{Z}$
and $\lambda,\mu\in \Lambda$ we obtain 
\begin{displaymath}
\mathrm{ext}^i_{R(A)}(\mathrm{G}\Delta(\lambda)\langle j\rangle,
\mathrm{G}P(\mu))=
\mathrm{ext}^i_{A}(P(\mu),\Delta(\lambda)\langle j\rangle)=0.
\end{displaymath}
Hence $\mathrm{G}P(\mu)$ has a proper costandard filtration by 
Theorem~\ref{thm2}\eqref{thm2-1}, and thus is a tilting 
$R(A)$-module, which implies
\eqref{thm32-2}. As projective $A$-modules have finite standard
filtration, the algebra $R(A)$ is weakly adapted. It is even
adapted as the grading on $R(R(A))$ coincides with the grading on
$A$ and is hence positive. This proves \eqref{thm32-1}. 
Statement \eqref{thm32-4} follows easily from the properties
of $\mathrm{G}$, established above. This completes the proof.
\end{proof}

Similarly to the above we consider the contravariant functors 
\begin{gather*}
\mathrm{F}=\mathcal{R}\mathrm{hom}_A(\mathfrak{T},{}_-)^{\circledast}:
\mathcal{D}^{+}(A^{\uparrow}\text{-}\mathrm{gmod})\to
\mathcal{D}^{-}(R(A)^{\downarrow}\text{-}\mathrm{gmod})\\
\tilde{\mathrm{F}}=\mathcal{R}\mathrm{hom}_A(\mathfrak{T},{}_-)^{\circledast}:
\mathcal{D}^{-}(A^{\downarrow}\text{-}\mathrm{gmod})\to
\mathcal{D}^{+}(R(A)^{\uparrow}\text{-}\mathrm{gmod}).
\end{gather*}
Although it is not obvious from the first impression, the following 
statement carries a strong resemblance with \cite[Proposition~20]{MOS}:

\begin{theorem}[Strong Ringel duality]
\label{thm33}
Let $A$ be an adapted standardly stratified algebra.
\begin{enumerate}[(i)]
\item\label{thm33-3}
Both $\mathrm{F}$ and $\tilde{\mathrm{F}}$ are antiequivalences.
\item\label{thm33-4} 
The functor $\mathrm{F}$ induces an antiequivalence from the
category $\mathcal{F}^{\uparrow}(\overline{\nabla}^{(A)})$ to the category
$\mathcal{F}^{\downarrow}(\overline{\nabla}^{(R(A))})$, which sends 
proper costandard $A$-modules to proper costandard $R(A)$-modules, 
and injective $A$-modules to tilting $R(A)$-modules.
\item\label{thm33-5} 
The functor $\tilde{\mathrm{F}}$ induces an antiequivalence from the
category $\mathcal{F}^{\downarrow}(\overline{\nabla}^{(A)})$ to the category
$\mathcal{F}^{\uparrow}(\overline{\nabla}^{(R(A))})$, which sends 
proper costandard $A$-modules to proper costandard $R(A)$-modules, 
and tilting $A$-modules to injective $R(A)$-modules.
\end{enumerate}
\end{theorem}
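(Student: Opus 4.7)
The plan is to derive Theorem~\ref{thm33} from the weak Ringel duality (Theorem~\ref{thm32}) together with direct computations on proper costandard and injective modules. First I would show that $\mathrm{F}$ is acyclic on $\mathcal{F}^{\uparrow}(\overline{\nabla}^{(A)})$ and that $\tilde{\mathrm{F}}$ is acyclic on $\mathcal{F}^{\downarrow}(\overline{\nabla}^{(A)})$. For $\mathrm{F}$, given $M\in\mathcal{F}^{\uparrow}(\overline{\nabla}^{(A)})$, Theorem~\ref{thm2new} gives $\mathrm{ext}^i_A(\Delta(\lambda)\langle j\rangle,M)=0$ for all $i>0$; since $A$ is adapted every tilting $T(\mu)$ has a finite standard filtration (Proposition~\ref{prop31}), and a dimension-shift along this filtration yields $\mathrm{ext}^i_A(T(\mu),M)=0$ for $i>0$. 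The symmetric argument, using Corollary~\ref{corlem5}\eqref{corlem5.2} and the proper costandard filtration of $T(\mu)$, handles $\tilde{\mathrm{F}}$. Consequently, on these categories both derived functors reduce to $\mathrm{hom}_A(\mathfrak{T},{}_-)^{\circledast}$ concentrated in cohomological degree zero.

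Next I would identify the images of the distinguished modules. For $\mathrm{F}(\overline{\nabla}^{(A)}(\lambda))$: traversing the standard filtration of $T(\mu)$ via Corollary~\ref{cor11} and using Lemma~\ref{lem5}, the graded dimension of $\mathrm{hom}_A(T(\mu),\overline{\nabla}^{(A)}(\lambda)\langle j\rangle)$ equals the multiplicity of $\Delta(\lambda)\langle -j\rangle$ in $T(\mu)$. Using the identification of standard $R(A)$-modules provided by Theorem~\ref{thm32}\eqref{thm32-4} (together with the description of $R(A)$ as standardly stratified with respect to $\preceq^{\mathrm{op}}$), the $\circledast$-dual has exactly the graded character of $\overline{\nabla}^{(R(A))}(\lambda)$; tracking the socle $L^{(A)}(\lambda)$ of $\overline{\nabla}^{(A)}(\lambda)$ through the construction then pins down the isomorphism $\mathrm{F}(\overline{\nabla}^{(A)}(\lambda))\cong\overline{\nabla}^{(R(A))}(\lambda)$. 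For the injective-to-tilting correspondence in \eqref{thm33-4}, $I(\lambda)\in\mathcal{F}^{\uparrow}(\overline{\nabla}^{(A)})$ by the Weak Dlab theorem (Corollary~\ref{cor2new}), so $\mathrm{F}(I(\lambda))\in\mathcal{F}^{\downarrow}(\overline{\nabla}^{(R(A))})$ from what we just proved. To verify the missing $\Delta^{(R(A))}$-filtration I would appeal to Theorem~\ref{thm2}\eqref{thm2-1}, translating the vanishing of $\mathrm{ext}^1_{R(A)}(\mathrm{F}(I(\lambda)),\overline{\nabla}^{(R(A))}(\mu)\langle j\rangle)$, via the antiequivalence and the $\overline{\nabla}$-preservation just established, into the vanishing of $\mathrm{ext}^1_A(\overline{\nabla}^{(A)}(\mu)\langle j\rangle,I(\lambda))$, which is zero by injectivity of $I(\lambda)$. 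The symmetric claims in \eqref{thm33-5} for $\tilde{\mathrm{F}}$ follow by analogous arguments, using that tilting $A$-modules lie in $\mathcal{F}^{\downarrow}(\overline{\nabla}^{(A)})$ and that their images identify, after dualization and comparison of characters, with indecomposable injective $R(A)$-modules.

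Finally, for part \eqref{thm33-3}, I would exploit that $A^{\uparrow}\text{-}\mathrm{gmod}$ has enough injectives and that $\mathrm{F}$ sends a bounded-below injective resolution of an arbitrary object to a bounded-above complex of tilting $R(A)$-modules (well-defined in $\mathcal{D}^{-}(R(A)^{\downarrow}\text{-}\mathrm{gmod})$) by the injective-to-tilting correspondence. The compositions $\tilde{\mathrm{F}}\circ\mathrm{F}$ and $\mathrm{F}\circ\tilde{\mathrm{F}}$ can be checked to be naturally isomorphic to the identity on the generators $\{I(\lambda)\langle j\rangle\}$ and $\{T(\lambda)\langle j\rangle\}$ by a biduality computation, after which the general case follows by extension through resolutions, in parallel with the argument for Theorem~\ref{thm32}\eqref{thm32-3} via \cite[Corollary~9.2]{Ke}. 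The main obstacle is the infinite-dimensional and unbounded nature of the setup: one must ensure that the graded direct sums appearing in $\mathrm{hom}_A(\mathfrak{T},M)$ remain locally finite in each degree after applying $\circledast$—this is precisely where the positivity of the grading on $R(A)$ (the adapted hypothesis) is essential—and that the boundedness conventions $\mathcal{D}^{+}\leftrightarrow\mathcal{D}^{-}$ interact correctly with graded duality and with the unbounded resolutions used to compute $\mathrm{F}$ and $\tilde{\mathrm{F}}$ on the full derived categories.
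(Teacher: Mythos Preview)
Your strategy differs substantially from the paper's. The paper does not first establish acyclicity on the subcategories and then bootstrap to the derived level; instead it passes to the covariant versions $\mathrm{H}=\mathcal{R}\mathrm{hom}_A(\mathfrak{T},{}_-)$ and $\tilde{\mathrm{H}}$, exhibits their left adjoints $\mathrm{H}'$, $\tilde{\mathrm{H}}'$ as derived tensor products with $\mathfrak{T}$, and then checks that the adjunction unit and counit are isomorphisms when evaluated on \emph{proper (co)standard} objects (using that these have trivial endomorphism rings). A Rickard-type limit argument then propagates this to the whole of $\mathcal{D}^{\pm}$, giving statement~\eqref{thm33-3} first; statements~\eqref{thm33-4} and~\eqref{thm33-5} are then immediate consequences. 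Your route --- compute images on distinguished objects, then assemble the derived equivalence from generators --- is more hands-on and in principle workable, but the paper's adjoint-functor argument is both shorter and avoids the bookkeeping of character comparisons.

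There is, however, a genuine circularity in your outline. In the paragraph establishing that $\mathrm{F}(I(\lambda))$ has a $\Delta^{(R(A))}$-filtration, you write that you would translate the vanishing of $\mathrm{ext}^1_{R(A)}(\mathrm{F}(I(\lambda)),\overline{\nabla}^{(R(A))}(\mu)\langle j\rangle)$ ``via the antiequivalence'' into $\mathrm{ext}^1_A(\overline{\nabla}^{(A)}(\mu)\langle j\rangle,I(\lambda))=0$. But the only antiequivalence available at that stage is the one you are trying to prove (part~\eqref{thm33-3}); Theorem~\ref{thm32} concerns $\mathrm{G}$ on $\mathcal{P}(A)$ and $\mathcal{F}^b(\Delta)$ and says nothing about $\mathrm{ext}$ between $\mathrm{F}(I(\lambda))$ and proper costandard $R(A)$-modules. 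Acyclicity of $\mathrm{F}$ on $\mathcal{F}^{\uparrow}(\overline{\nabla})$ alone does not let you transport $\mathrm{ext}^1$: you would additionally need full faithfulness of $\mathrm{F}$ on some category containing both $I(\lambda)$ and an $\mathrm{F}$-preimage of a first syzygy of $\overline{\nabla}^{(R(A))}(\mu)$, which you have not set up. A second, smaller issue: the compositions ``$\tilde{\mathrm{F}}\circ\mathrm{F}$'' and ``$\mathrm{F}\circ\tilde{\mathrm{F}}$'' do not typecheck as written, since $\tilde{\mathrm{F}}$ has source $\mathcal{D}^{-}(A^{\downarrow}\text{-}\mathrm{gmod})$, not $\mathcal{D}^{-}(R(A)^{\downarrow}\text{-}\mathrm{gmod})$; you presumably mean $\tilde{\mathrm{F}}_{R(A)}\circ\mathrm{F}_A$ together with $R(R(A))\cong A$, and this should be made explicit. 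Both problems disappear if you reorder the argument to prove~\eqref{thm33-3} first, as the paper does.
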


\begin{proof}
Consider the covariant versions of our functors:
\begin{gather*}
\mathrm{H}=\mathcal{R}\mathrm{hom}_A(\mathfrak{T},{}_-):
\mathcal{D}^{+}(A^{\uparrow}\text{-}\mathrm{gmod})\to
\mathcal{D}^{+}(\mathrm{gmod}\text{-}R(A)^{\uparrow})\\
\tilde{\mathrm{H}}=\mathcal{R}\mathrm{hom}_A(\mathfrak{T},{}_-):
\mathcal{D}^{-}(A^{\downarrow}\text{-}\mathrm{gmod})\to
\mathcal{D}^{-}(\mathrm{gmod}\text{-}R(A)^{\downarrow}).
\end{gather*}
Every object in $\mathcal{D}^{-}(A^{\downarrow}\text{-}\mathrm{gmod})$ 
has a projective resolution. Since $T$ is a tilting module, every object in 
$\mathcal{D}^{-}(A^{\downarrow}\text{-}\mathrm{gmod})$ is also given
by a complex of tilting modules. As tilting modules are selforthogonal,
for complexes of tilting modules the functor $\tilde{\mathrm{H}}$ reduces
to the usual hom functor. Similarly every object in 
$\mathcal{D}^{+}(A^{\uparrow}\text{-}\mathrm{gmod})$ has an
injective resolution and for such complexes the functor $\mathrm{H}$ 
reduces to the usual hom functor.

The left adjoints $\mathrm{H}'$ and $\tilde{\mathrm{H}}'$
of $\mathrm{H}$ and $\tilde{\mathrm{H}}$, respectively, are thus given
by the left derived of the tensoring with $\mathfrak{T}$. As $T$ is
a tilting module, these left adjoin´t functors can be given as
a tensoring with a finite tilting complex of $A\text{-}R(A)$-bimodules,
projective as right $R(A)$-modules, followed by taking the total complex.

Using the definition of proper costandard modules it is 
straightforward to verify that both $\mathrm{H}$ and $\tilde{\mathrm{H}}$
map  proper costandard left $A$-modules to proper standard right $R(A)$-modules.
Similarly, both  $\mathrm{H}'$ and $\tilde{\mathrm{H}}'$
map proper standard right $R(A)$-modules to proper costandard left 
$A$-modules. Since proper (co)standard objects have trivial endomorphism 
rings, it follows by standard arguments that the adjunction morphism
\begin{gather*}
\mathrm{Id}_{\mathcal{D}^{+}(\mathrm{gmod}\text{-}R(A)^{\uparrow})}
\to \mathrm{H}\mathrm{H}',\quad
\mathrm{H}'\mathrm{H}\to
\mathrm{Id}_{\mathcal{D}^{+}(A^{\uparrow}\text{-}\mathrm{gmod})} \\
\mathrm{Id}_{\mathcal{D}^{-}(\mathrm{gmod}\text{-}R(A)^{\downarrow})}
\to \tilde{\mathrm{H}}\tilde{\mathrm{H}}',\quad
\tilde{\mathrm{H}}'\tilde{\mathrm{H}}\to
\mathrm{Id}_{\mathcal{D}^{-}(A^{\downarrow}\text{-}\mathrm{gmod})} \\
\end{gather*}
induce isomorphisms, when evaluated on respective proper (co)standard 
objects. Therefore the adjunction morphism above are isomorphisms 
of functors on the categories, generated (as triangular categories)
by proper (co)standard objects. Using the classical limit construction
(see \cite{Ric}) one shows that both $\mathrm{H}$ and $\tilde{\mathrm{H}}$ 
are equivalences of categories. 
This yields that both $\mathrm{F}$ and $\tilde{\mathrm{F}}$ are  
antiequivalences of categories. This proves statement \eqref{thm33-3} and 
statements \eqref{thm33-4} and \eqref{thm33-5} easily follow.
\end{proof}

\section{Proof of the main result}\label{s4}

If $M\in\{P(\lambda),I(\lambda),T(\lambda),\Delta(\lambda),
\overline{\nabla}(\lambda)\}$, we will say that the centroid of the 
graded modules $M\langle j\rangle$, where $j\in\mathbb{Z}$, belongs to $-j$.
Let $\mathcal{X}^{\bullet}$ and $\mathcal{Y}^{\bullet}$ be two
complexes of tilting modules, both bounded from the right. 
A complex $\mathcal{X}^{\bullet}$ of projective, injective, tilting,
standard, or costandard modules is called {\em linear} provided that for
every $i$ centroids of all indecomposable summand of 
$\mathcal{X}^{i}$ belong to $-i$. A positively graded algebra $B$ is 
called {\em Koszul} if all simple $B$-modules have linear projective 
resolutions. The Koszul dual $E(A)$ of a Koszul algebra $A$ is just the 
Yoneda extension algebra of the direct sum of all simple $A$-modules. 
The algebra $E(A)$ is positively graded by the degree of extensions.

We will say that $\mathcal{X}^{\bullet}$ {\em dominates} 
$\mathcal{Y}^{\bullet}$ provided that for every $i\in\mathbb{Z}$ the 
following holds: if the centroid of an indecomposable summand of 
$\mathcal{X}^{i}$ belongs to $j$ and the centroid of an indecomposable 
summand of $\mathcal{Y}^{i}$  belongs to $j'$, then $j<j'$.

The aim of this section is to prove Theorem~\ref{thm1}. For this
we fix an algebra $A$ satisfying the assumptions of Theorem~\ref{thm1}
throughout (we will call such algebra {\em balanced}). 
For $\lambda\in\Lambda$ we denote by
$\mathcal{S}_{\lambda}^{\bullet}$ and  $\mathcal{C}_{\lambda}^{\bullet}$ 
the linear tilting coresolution of $\Delta(\lambda)$ and resolution 
of $\overline{\nabla}(\lambda)$, respectively. We will proceed along the 
lines of \cite[Section~3]{Ma5} and do not repeat the arguments, which are
similar to the ones from \cite[Section~3]{Ma5}.

\begin{lemma}\label{nlem2}
The algebra $A$ is adapted.
\end{lemma}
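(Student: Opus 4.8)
The statement to be proved, Lemma~\ref{nlem2}, asserts that a balanced algebra $A$ is adapted, i.e.\ that the natural $\mathbb{Z}$-grading on $R(A)=\mathrm{End}_A(T)$ is positive. By Proposition~\ref{prop31}\eqref{prop31.1} it is equivalent (given that we are dealing with a standardly stratified algebra) to check that $R(A)$ is positively graded in the sense of the standing conventions, and the key obstruction, as explained in the discussion after Remark~\ref{rem14}, is exactly to rule out negative-degree endomorphisms between tilting modules. The plan is to use the linearity hypotheses \eqref{thm1-c2} and \eqref{thm1-c3} to pin down where the standard and proper costandard subquotients of each $T(\lambda)$ can sit, and then invoke the description of $\mathrm{hom}_A(T(\lambda),T(\mu)\langle j\rangle)$ from the proof of Proposition~\ref{prop31}.

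First I would record the factorization established in the proof of Proposition~\ref{prop31}: every homomorphism $T(\lambda)\to T(\mu)\langle j\rangle$ factors as a map from some standard subquotient $\Delta(\nu)\langle a\rangle$ of $T(\lambda)$ to some proper costandard subquotient $\overline{\nabla}(\nu)\langle b\rangle$ of $T(\mu)\langle j\rangle$, and by Lemma~\ref{lem5} such a map is nonzero only when the indices match, forcing $a=b$; consequently a nonzero degree-$j$ endomorphism-space element exists only if $T(\lambda)$ has a standard subquotient $\Delta(\nu)\langle a\rangle$ and $T(\mu)\langle j\rangle$ has a proper costandard subquotient $\overline{\nabla}(\nu)\langle a\rangle$, equivalently $T(\mu)$ has $\overline{\nabla}(\nu)\langle a-j\rangle$. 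So it suffices to show that the centroids of the standard subquotients of any $T(\lambda)$ and the centroids of the proper costandard subquotients of any $T(\mu)$ are controlled so that matching them forces $j\geq 0$, with $j=0$ only in the identity situation.

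The heart of the argument is to extract this control from linearity. Consider the linear tilting coresolution $\mathcal{S}_\lambda^\bullet: 0\to\Delta(\lambda)\to T^0\to T^1\to\cdots$; linearity says each indecomposable summand of $T^i$ has centroid in $-i$, and in particular $T^0$ is a direct sum of $T(\nu)$'s with centroid in $0$, i.e.\ genuinely $T(\nu)$ with $\nu$ among the relevant indices (the summand $T(\lambda)$ itself appears, with centroid $0$). Running down the preorder and using that $\Delta(\lambda)\hookrightarrow T(\lambda)$ with $\mathcal{F}^{\downarrow}(\Delta)$-cokernel, one sees that all standard subquotients $\Delta(\nu)\langle a\rangle$ of $T(\lambda)$ other than $\Delta(\lambda)$ itself have $-a>0$, i.e.\ $a<0$; dually, from the linear tilting resolution $\mathcal{C}_\mu^\bullet$ of $\overline{\nabla}(\mu)$ one obtains that all proper costandard subquotients $\overline{\nabla}(\nu)\langle b\rangle$ of $T(\mu)$ other than $\overline{\nabla}(\mu)$ itself have $b>0$. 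Feeding these two sign constraints into the matching condition $a=b-j$ from the previous paragraph: if $\lambda\neq\mu$ then a nonzero map uses a non-top subquotient on at least one side, giving $a<0\leq b$ or $a\le 0<b$, hence $j=b-a>0$; if $\lambda=\mu$ and $j\leq 0$, the only possibility is the top-to-top matching $\Delta(\lambda)\to\overline{\nabla}(\lambda)$ with $a=b=0$, forcing $j=0$ and the map a scalar. This yields positivity of the grading on $R(A)$, hence $A$ is adapted.

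The main obstacle I anticipate is making the ``running down the preorder'' step rigorous: one must argue that linearity of the coresolution $\mathcal{S}_\lambda^\bullet$ (a statement about the centroids of the indecomposable tilting summands appearing in the complex) really does force the stated sign on the centroids of the \emph{standard subquotients inside} a single $T(\lambda)$. The clean way is induction on the preorder $\preceq$ using Theorem~\ref{thm8}\eqref{thm8-2}: the short exact sequence $\Delta(\lambda)\hookrightarrow T(\lambda)\twoheadrightarrow\mathrm{Coker}$ with $\mathrm{Coker}\in\mathcal{F}^{\downarrow}(\Delta)$ filtered by $\Delta(\mu)\langle i\rangle$, $\mu\prec\lambda$, combined with comparing the universal-extension construction of $T(\lambda)$ against the first terms of the linear coresolution $\mathcal{S}_\lambda^\bullet$ (whose $0$th term must contain $T(\lambda)$ as the unique summand with centroid $0$, the rest having positive centroid since $\Delta(\mu)\langle i\rangle$ for $\mu\prec\lambda$ enters $T(\lambda)$ only via the universal extensions, which by Lemma~\ref{lem9} and Lemma~\ref{lem10} involve only strictly positive shifts once linearity is imposed). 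I would present this inductive bookkeeping carefully, referring to \cite[Section~3]{Ma5} for the analogous argument in the quasi-hereditary case, and flag that the infinite-dimensionality of $T(\lambda)$ is harmless here because only finitely many subquotients have centroid $\le 0$ (as $T(\lambda)\in A^{\downarrow}\text{-}\mathrm{gmod}$), which is precisely the finiteness already exploited in the proof of Proposition~\ref{prop31}.
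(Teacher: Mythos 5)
The overall strategy is sensible and matches the reduction used in Proposition~\ref{prop31}: reduce positivity of the grading on $R(A)$ to a statement about the shifts occurring in the $\Delta$-filtration of each $T(\lambda)$ and the $\overline{\nabla}$-filtration of each $T(\mu)$, then match via Lemma~\ref{lem5}. However, your sign claims are backwards, and this is fatal. You assert that every standard subquotient $\Delta(\nu)\langle a\rangle$ of $T(\lambda)$ with $\nu\neq\lambda$ has $a<0$, and dually that every proper costandard subquotient $\overline{\nabla}(\nu)\langle b\rangle$ of $T(\mu)$ with $\nu\neq\mu$ has $b>0$. The paper's own proof of Proposition~\ref{prop31}\eqref{prop31.1} states explicitly that positivity of the grading on $R(A)$ forces standard subquotients of $T(\lambda)$ to be $\Delta(\mu)\langle j\rangle$ with $j>0$, not $j<0$, and Example~\ref{exm3} confirms this concretely: there $T(2)=P(1)\langle 1\rangle$ has $\Delta$-filtration $\Delta(2)\hookrightarrow T(2)\tto\Delta(1)\langle 1\rangle$ (positive shift) and $\overline{\nabla}$-subquotients $\overline{\nabla}(2)\langle -k\rangle$, $k\geq 0$ (non-positive shifts). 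If you feed your claimed signs ($a\le 0$, $a-j\ge 0$) into the matching you actually derive $j\leq a\leq 0$, i.e.\ that $R(A)$ is concentrated in \emph{non-positive} degrees, which is the opposite of what adaptedness requires; the further algebra slip ($j=b-a$ rather than $j=a-b$ with your own $b=a-j$) masks this but does not save it.

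Beyond the sign error, the step where linearity is supposed to deliver the shift constraint is not actually established. You say the $0$th term of $\mathcal{S}_\lambda^\bullet$ has $T(\lambda)$ as the unique summand with centroid $0$, ``the rest having positive centroid'' --- but by the very definition of a linear complex \emph{all} indecomposable summands of $\mathcal{S}_\lambda^0$ have centroid $0$, so this sentence is internally inconsistent. More importantly, knowing the centroids of the \emph{tilting summands} appearing in $\mathcal{S}_\lambda^\bullet$ does not, by itself, control the shifts of the \emph{standard subquotients inside} $T(\lambda)$; that implication is precisely what has to be proved, and invoking Lemmata~\ref{lem9} and \ref{lem10} ``once linearity is imposed'' does not supply it. What is needed (and what, with the correct signs, one should aim for) is a genuine argument that the hypotheses \eqref{thm1-c2}--\eqref{thm1-c3} force $\mathrm{hom}_A(T(\lambda),T(\mu)\langle j\rangle)=0$ for $j<0$; the paper delegates this to the quasi-hereditary prototype \cite[Lemma~2]{Ma5}, and your sketch does not yet reconstruct that argument.
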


\begin{proof}
Mutatis mutandis \cite[Lemma~2]{Ma5}.
\end{proof}

\begin{corollary}\label{ncor3}
We have $\mathrm{hom}_A(T(\lambda)\langle i\rangle,T(\mu))=0$,
for all $\lambda,\mu\in\Lambda$ and $i\in\mathbb{N}$.
\end{corollary}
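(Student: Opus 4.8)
The plan is to exploit the fact that $A$ is adapted (Lemma~\ref{nlem2}), so that, by Proposition~\ref{prop31}\eqref{prop31.1}, every tilting module $T(\lambda)$ has a \emph{finite} standard filtration. Recall from the proof of Proposition~\ref{prop31} that every homomorphism from $T(\lambda)\langle i\rangle$ to $T(\mu)$ is ``induced'' from a homomorphism between a standard subquotient of the source and a proper standard subquotient of the target; concretely, by Lemma~\ref{lem5} and the characterization of tilting modules, the space $\mathrm{hom}_A(T(\lambda)\langle i\rangle,T(\mu))$ is computed from the standard multiplicities of $T(\lambda)$ and the proper costandard multiplicities of $T(\mu)$. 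More precisely, Corollary~\ref{cor11} and Lemma~\ref{lem5} give
\begin{displaymath}
\dim\mathrm{hom}_A(T(\lambda)\langle i\rangle,T(\mu))=\sum_{\nu\in\Lambda,\,k\in\mathbb{Z}}
(T(\lambda):\Delta(\nu)\langle k-i\rangle)\cdot(T(\mu):\overline{\nabla}(\nu)\langle k\rangle),
\end{displaymath}
so a nonzero hom forces the existence of $\nu$ and $k$ with $\Delta(\nu)\langle k-i\rangle$ occurring in $T(\lambda)$ and $\overline{\nabla}(\nu)\langle k\rangle$ occurring in $T(\mu)$.

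First I would pin down the degree bounds coming from adaptedness. Since $A$ is adapted, the grading on $R(A)=\mathrm{End}_A(T)$ is positive; as noted in the proof of Proposition~\ref{prop31}, this means every standard subquotient of $T(\lambda)$ other than $\Delta(\lambda)$ itself has the form $\Delta(\nu)\langle k\rangle$ with $k>0$, and $\Delta(\lambda)$ itself sits at shift $0$. Dually (applying the graded duality, or arguing directly from the positivity of the grading on $R(A)$ together with the self-duality of the tilting class), every proper costandard subquotient of $T(\mu)$ other than $\overline{\nabla}(\mu)$ has the form $\overline{\nabla}(\nu)\langle k\rangle$ with $k<0$, while $\overline{\nabla}(\mu)$ occurs at shift $0$.

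Now feed these bounds into the hom-formula with a shift by $i\in\mathbb{N}$, i.e. $i\geq 1$. A nonzero contribution requires $\Delta(\nu)\langle k-i\rangle$ in $T(\lambda)$, hence $k-i\geq 0$, i.e. $k\geq i\geq 1$; and it requires $\overline{\nabla}(\nu)\langle k\rangle$ in $T(\mu)$, hence $k\leq 0$. These two inequalities are incompatible, so no such $\nu,k$ exist and the hom-space vanishes. I would write this out as: the centroid bookkeeping shows that any map $T(\lambda)\langle i\rangle\to T(\mu)$ must factor through a map $\Delta(\nu)\langle k-i\rangle\to\overline{\nabla}(\nu)\langle k\rangle$ with $k\geq i$ and $k\leq 0$, forcing $i\leq 0$, contradicting $i\in\mathbb{N}$.

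The main obstacle I anticipate is making the ``factors through'' reduction fully rigorous in the graded, possibly-infinite-filtration setting: one needs that a homomorphism between tilting modules is detected on the subquotients of their standard/proper-costandard filtrations, with the finiteness of the standard filtration of $T(\lambda)$ (guaranteed by adaptedness) ensuring the relevant sums are finite and the argument does not run into convergence issues. This is exactly the content already extracted in the first paragraph of the proof of Proposition~\ref{prop31} together with Lemma~\ref{lem5} and Corollary~\ref{cor11}, so in the writeup I would simply cite those and then do the short degree computation above; indeed the whole statement is the graded analogue of \cite[Corollary~3]{Ma5} and the proof is mutatis mutandis.
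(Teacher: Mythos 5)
The paper gives no proof of this corollary because it is an immediate restatement of Lemma~\ref{nlem2}: the degree-$d$ component of $R(A)=\mathrm{End}_A(T)$ is built from the spaces $\mathrm{hom}_A(T(\lambda),T(\mu)\langle d\rangle)=\mathrm{hom}_A(T(\lambda)\langle -d\rangle,T(\mu))$, so ``the grading on $R(A)$ is positive'' (the definition of \emph{adapted}) is \emph{literally} the vanishing $\mathrm{hom}_A(T(\lambda)\langle i\rangle,T(\mu))=0$ for $i\in\mathbb{N}$. No multiplicity formula is needed; you just unwind the definition.

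Your detour through the graded hom-formula also has a genuine gap. The degree bound you extract from adaptedness on the $\Delta$-side --- $(T(\lambda):\Delta(\nu)\langle m\rangle)\neq 0\Rightarrow m\geq 0$, with $m=0$ only for $\Delta(\lambda)$ --- is indeed what the proof of Proposition~\ref{prop31}\eqref{prop31.1} establishes. But the ``dual'' bound on the proper costandard multiplicities of $T(\mu)$, namely $(T(\mu):\overline{\nabla}(\nu)\langle k\rangle)\neq 0\Rightarrow k\leq 0$, is not justified by either of the two routes you offer. The graded duality $\circledast$ sends $T(\mu)$ to a module over $A^{\mathrm{op}}$, so that argument would require adaptedness of $A^{\mathrm{op}}$, which is not part of the hypotheses and not established at this point. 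The ``direct'' argument from positivity of $R(A)$ passes through the identity $(T(\mu):\overline{\nabla}(\nu)\langle k\rangle)=\dim\mathrm{hom}_A(\Delta(\nu)\langle k\rangle,T(\mu))$ and the observation that $\mathrm{hom}_A(\Delta(\nu)\langle k\rangle,T(\mu))$ is a quotient of $\mathrm{hom}_A(T(\nu)\langle k\rangle,T(\mu))$ (via the short exact sequence $\Delta(\nu)\hookrightarrow T(\nu)\tto C$ and Lemma~\ref{lemnn7}); but the vanishing of $\mathrm{hom}_A(T(\nu)\langle k\rangle,T(\mu))$ for $k>0$ is precisely the corollary you are proving, so this is circular. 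In short: your plan is recognizable, but step (b) is exactly as hard as the statement itself, and the intended proof is simply to read the statement off from Lemma~\ref{nlem2} and the grading convention on $R(A)$.
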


\begin{corollary}\label{ncor4}
Let $\mathcal{X}^{\bullet}$ and $\mathcal{Y}^{\bullet}$ be two 
complexes of tilting modules, both bounded from the right. Assume that 
$\mathcal{X}^{\bullet}$ dominates $\mathcal{Y}^{\bullet}$.
Then  $\mathrm{Hom}_{\mathcal{D}^{-}(A)}(\mathcal{X}^{\bullet},
\mathcal{Y}^{\bullet})=0$.
\end{corollary}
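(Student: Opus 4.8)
The plan is to reduce the statement about $\mathrm{Hom}$ in the derived category to a statement about $\mathrm{Hom}$ in the homotopy category, and then to kill all chain maps degree by degree using the grading constraint together with Corollary~\ref{ncor3}. Since both complexes consist of tilting modules and are bounded from the right, and tilting modules are selforthogonal (indeed $\mathrm{ext}^i_A(T,T')=0$ for $i>0$ by Lemma~\ref{lemnn7}, as tilting modules lie in $\mathcal{F}^{\downarrow}(\Delta)\cap\mathcal{F}^{\downarrow}(\overline{\nabla})$), the canonical map $\mathrm{Hom}_{\mathcal{K}^{-}(A)}(\mathcal{X}^{\bullet},\mathcal{Y}^{\bullet})\to \mathrm{Hom}_{\mathcal{D}^{-}(A)}(\mathcal{X}^{\bullet},\mathcal{Y}^{\bullet})$ is an isomorphism; this is the standard fact that a right-bounded complex of objects from a selforthogonal additive subcategory computes its own derived homomorphisms. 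So it suffices to show that every chain map $f^{\bullet}\colon\mathcal{X}^{\bullet}\to\mathcal{Y}^{\bullet}$ is null-homotopic — in fact I will show every such chain map is already zero.

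The key point is the domination hypothesis. Fix $i$. An indecomposable summand of $\mathcal{X}^{i}$ has the form $T(\lambda)\langle a\rangle$ with centroid in $j=-a$, and an indecomposable summand of $\mathcal{Y}^{i}$ has the form $T(\mu)\langle b\rangle$ with centroid in $j'=-b$; domination says $j<j'$, i.e. $-a<-b$, i.e. $b<a$. The component of $f^{i}$ between these summands lives in
\begin{displaymath}
\mathrm{hom}_A\bigl(T(\lambda)\langle a\rangle,T(\mu)\langle b\rangle\bigr)
\cong \mathrm{hom}_A\bigl(T(\lambda)\langle a-b\rangle,T(\mu)\bigr),
\end{displaymath}
and since $a-b>0$, i.e. $a-b\in\mathbb{N}$, this space vanishes by Corollary~\ref{ncor3}. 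As this holds for every pair of indecomposable summands in each homological degree $i$, we conclude $f^{i}=0$ for all $i$, so $f^{\bullet}=0$. Hence $\mathrm{Hom}_{\mathcal{K}^{-}(A)}(\mathcal{X}^{\bullet},\mathcal{Y}^{\bullet})=0$, and by the identification above $\mathrm{Hom}_{\mathcal{D}^{-}(A)}(\mathcal{X}^{\bullet},\mathcal{Y}^{\bullet})=0$ as well.

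The only genuinely delicate point is the passage from $\mathcal{K}^{-}$ to $\mathcal{D}^{-}$: one must be sure that right-bounded complexes of tilting modules compute derived homs correctly in this infinite-dimensional graded setting. This is where one invokes that $A^{\downarrow}\text{-}\mathrm{gmod}$ has enough projectives, that every object of $\mathcal{D}^{-}(A^{\downarrow}\text{-}\mathrm{gmod})$ is represented by a right-bounded complex of tilting modules (as used in the proof of Theorem~\ref{thm33}), and the selforthogonality of tilting modules; the argument is then the usual one, carried out with the grading keeping all relevant $\mathrm{hom}$ and $\mathrm{ext}$ spaces finite-dimensional in each degree. Everything else is the bookkeeping with shifts displayed above, which is routine once the domination inequality is translated into a positive shift.
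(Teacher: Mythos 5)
Your argument is correct and is, in essence, what the paper (via the reference to \cite[Corollary~4]{Ma5}) does: translate domination into a strictly positive internal shift between summands in the same homological degree, kill the chain-map components with Corollary~\ref{ncor3}, and then pass from the homotopy category to $\mathcal{D}^{-}(A)$ via the tilting machinery. The bookkeeping ($-a<-b \Rightarrow a-b\in\mathbb{N}$, then $\mathrm{hom}_A(T(\lambda)\langle a\rangle,T(\mu)\langle b\rangle)\cong\mathrm{hom}_A(T(\lambda)\langle a-b\rangle,T(\mu))=0$) is exactly right, and since each component of $f^i$ vanishes, $f^i=0$ on the whole (possibly infinite) direct sum.

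One small caution on phrasing: there is no general ``standard fact'' that a right-bounded complex over an arbitrary selforthogonal additive subcategory has $\mathrm{Hom}_{\mathcal{K}^{-}}\xrightarrow{\sim}\mathrm{Hom}_{\mathcal{D}^{-}}$; selforthogonality alone is not enough (the subcategory must also generate, which is not automatic). What actually carries the argument here is the Ringel-duality equivalence already established: under $\tilde{\mathrm{F}}$ of Theorem~\ref{thm33}, bounded-above complexes of tilting $A$-modules go to bounded-below complexes of injective $R(A)$-modules, for which homotopy and derived $\mathrm{Hom}$ agree, and this transports back. You do name the right ingredients in your last paragraph, so the proof stands; just be aware that the identification of $\mathcal{K}^{-}$ and $\mathcal{D}^{-}$ homs rests on the tilting equivalence specifically, not on selforthogonality in the abstract.
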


\begin{proof}
Mutatis mutandis \cite[Corollary~4]{Ma5}.
\end{proof}

\begin{proposition}\label{nprop5}
For every $\lambda\in\Lambda$ the module $L(\lambda)$ is isomorphic
in $\mathcal{D}^{-}(A)$ to a linear complex 
$\mathcal{L}_{\lambda}^{\bullet}$ of tilting modules.
\end{proposition}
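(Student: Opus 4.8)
\textbf{Proof plan for Proposition~\ref{nprop5}.}
The plan is to realize $L(\lambda)$ by splicing together the linear tilting (co)resolutions whose existence is guaranteed by the hypotheses of Theorem~\ref{thm1}. First I would recall the standard device for a standardly stratified algebra: $L(\lambda)$ sits inside $\overline{\nabla}(\lambda)$ with cokernel filtered by proper costandard modules $\overline{\nabla}(\nu)$ with $\nu\prec\lambda$, and dually $\Delta(\lambda)$ surjects onto $L(\lambda)$ with kernel filtered by standard modules $\Delta(\nu)$ with $\nu\prec\lambda$. Combining these, there is a short exact sequence (or rather a two-step filtration) relating $L(\lambda)$ to $\Delta(\lambda)$ on one side and to (shifts of) $\overline{\nabla}(\nu)$, $\nu\prec\lambda$, on the other; this is exactly the input that lets one induct on the preorder $\preceq$. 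For $\lambda$ minimal, $\Delta(\lambda)=L(\lambda)=T(\lambda)$ up to the costandard part, and the claim is essentially immediate from $\mathcal{S}_\lambda^{\bullet}$ and $\mathcal{C}_\lambda^{\bullet}$.

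The core of the argument is the inductive step, carried out as in \cite[Proposition~5]{Ma5}. Assuming every $L(\nu)$ with $\nu\prec\lambda$ is represented by a linear complex of tilting modules, I would first use the linear tilting coresolution $\mathcal{S}_\lambda^{\bullet}$ of $\Delta(\lambda)$ together with the linear tilting resolutions $\mathcal{C}_\nu^{\bullet}$ of $\overline{\nabla}(\nu)$ to build, via the cone construction, a complex of tilting modules isomorphic in $\mathcal{D}^-(A)$ to $L(\lambda)$; the various $\overline{\nabla}(\nu)$ appearing in the cokernel of $L(\lambda)\hookrightarrow\overline{\nabla}(\lambda)$ contribute their $\mathcal{C}_\nu^{\bullet}$, and the relation between $L(\lambda)$ and $\Delta(\lambda)$ contributes $\mathcal{S}_\lambda^{\bullet}$. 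One then checks that the resulting complex is linear. The linearity check is where the domination machinery of Corollary~\ref{ncor4} and the grading bookkeeping enter: the centroid conditions built into the definition of linearity, together with the fact that all the constituent complexes $\mathcal{S}_\lambda^{\bullet}$, $\mathcal{C}_\nu^{\bullet}$ and (by induction) $\mathcal{L}_\nu^{\bullet}$ are linear, force the spliced complex to be linear once one verifies that the connecting maps do not shift centroids incorrectly; Corollary~\ref{ncor4} guarantees that the "wrong-degree" components of these maps vanish in the derived category, so one may replace the complex by a genuinely linear one.

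The main obstacle I anticipate is not the existence of \emph{some} complex of tilting modules representing $L(\lambda)$ — that is formal homological algebra — but rather the verification of \emph{linearity}, i.e.\ that after splicing the linear pieces and taking cones the centroid of every indecomposable tilting summand in homological degree $i$ still lands in $-i$. This requires carefully tracking the grading shifts introduced by the short exact sequences relating $L(\lambda)$, $\Delta(\lambda)$, $\overline{\nabla}(\lambda)$ and the $\overline{\nabla}(\nu)$, and invoking Corollary~\ref{ncor4} (domination $\Rightarrow$ vanishing of $\mathrm{Hom}$ in $\mathcal{D}^-(A)$) to discard the components of chain maps and homotopies that would otherwise spoil linearity, exactly as in \cite[Section~3]{Ma5}. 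Since the argument is a direct transcription of the quasi-hereditary case with $\nabla$ replaced by the proper costandard $\overline{\nabla}$ throughout, I would phrase the proof as ``mutatis mutandis \cite[Proposition~5]{Ma5}'', indicating only the points above where the stratified setting requires the proper costandard modules and the finiteness built into the balanced hypothesis.
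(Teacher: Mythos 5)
Your proposal captures the broad shape of the argument (induction on $\preceq$, splice together linear tilting (co)resolutions, watch the centroids), but two of the structural claims it rests on are false, and the mechanism you offer for establishing linearity is not the one that actually works.

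First, the filtrations. The cokernel of $L(\lambda)\hookrightarrow\overline{\nabla}(\lambda)$ is in general \emph{not} filtered by proper costandard modules $\overline{\nabla}(\nu)\langle j\rangle$; what is true, and what the paper uses, is that it is a finite-dimensional module concentrated in negative degrees whose \emph{composition factors} are $L(\mu)\langle j\rangle$ with $\mu\prec\lambda$ and $j>0$. Consequently the building blocks for the inductive step are the complexes $\mathcal{L}_\mu^\bullet$ supplied by the inductive hypothesis, not the $\mathcal{C}_\mu^\bullet$; your plan to plug in $\mathcal{C}_\nu^\bullet$'s for the cokernel has no input to feed them. Likewise the kernel of $\Delta(\lambda)\tto L(\lambda)$, i.e.\ $\mathrm{rad}\,\Delta(\lambda)$, is in general not filtered by standard modules, so the ``dual'' half of your device is not available either.

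Second, and more importantly, the linearity is not obtained by ``tracking grading shifts and discarding wrong-degree components via Corollary~\ref{ncor4}.'' The paper proves linearity by producing \emph{two} one-sided estimates and then invoking uniqueness of the minimal representing tilting complex: (a) starting from a projective resolution of $L(\lambda)$ and replacing projectives by their linear tilting coresolutions $\mathcal{S}_\mu^\bullet$ (this is the part done ``as in \cite[Proposition~5]{Ma5}'' and requires no induction on $\preceq$), one gets a representative with centroids of $\overline{\mathcal{P}}^{\,i}$ at positions $j\geq -i$; (b) the induction on $\preceq$ with the triangle $L(\lambda)\to\overline{\nabla}(\lambda)\to\mathrm{Coker}$, resolving $\overline{\nabla}(\lambda)$ by $\mathcal{C}_\lambda^\bullet$ and $\mathrm{Coker}$ by the (shifted, hence strictly dominated) $\mathcal{L}_\mu^\bullet$, gives a representative with centroids at $j\leq -i$. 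The minimal tilting complex $\mathcal{L}_\lambda^\bullet$ representing $L(\lambda)$ must satisfy both bounds simultaneously, hence is linear. Your proposal conflates these two constructions into a single cone construction and then gestures at domination; as written it neither supplies the lower bound $j\geq -i$ nor explains how a single spliced complex becomes linear. Corollary~\ref{ncor4} is used later (e.g.\ in Corollary~\ref{ncor7}) to compare linear complexes, not to establish linearity of $\mathcal{L}_\lambda^\bullet$ here.
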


\begin{proof}
Just as in \cite[Proposition~5]{Ma5}, one constructs a complex $\overline{\mathcal{P}}^{\bullet}$ of tilting modules in
$\mathcal{D}^{-}(A)$, quasi-isomorphic to $L(\lambda)$ and such  that
for each $i$ all centroids  of indecomposable summands in 
$\overline{\mathcal{P}}^{i}$ belong to some $j$ such that $j\geq -i$.

Let us now prove the claim by induction with respect to $\preceq$.
If $\lambda$ is minimal, then $L(\lambda)=\overline{\nabla}(\lambda)$
and we can take $\mathcal{L}_{\lambda}^{\bullet}=
\mathcal{C}_{\lambda}^{\bullet}$. Otherwise, consider the
short exact sequence
\begin{displaymath}
0\to L(\lambda)\to  \overline{\nabla}(\lambda)\to
\mathrm{Coker}\to 0.
\end{displaymath}
Since $A$ is positively graded, we have $\mathrm{Coker}_j=0$ for
all $j\geq 0$. Moreover, $\mathrm{Coker}$ is finite dimensional 
(Lemma~\ref{lem4}) and
all simple subquotients of $\mathrm{Coker}$ correspond to 
some $\mu\in\Lambda$ such that $\mu\prec \lambda$. Using the
inductive assumption, we can resolve every simple subquotient of
$\mathrm{Coker}$ using the corresponding linear complexes of tilting modules
and thus obtain that $\mathrm{Coker}$ is quasi-isomorphic to some
complex $\mathcal{X}^{\bullet}$ of tilting modules such that for each 
$i$ all centroids  of indecomposable summands in  $\mathcal{X}^{i}$ 
belong to some $j$ such that $j\leq -i-1$. As $\overline{\nabla}(\lambda)$
has a linear tilting resolution, it follows that $L(\lambda)$ is 
quasi-isomorphic to some complex $\overline{Q}^{\bullet}$ of 
tilting modules,  such  that for each $i$ all centroids  of 
indecomposable summands in 
$\overline{\mathcal{Q}}^{i}$ belong to some $j$ such that $j\leq -i$.

Because of the uniqueness of the minimal tilting complex
$\mathcal{L}_{\lambda}^{\bullet}$, representing $L(\lambda)$ in 
$\mathcal{D}^{-}(A^{\downarrow}\text{-}\mathrm{mod})$, 
we thus conclude that for all $i\in\mathbb{Z}$
centroids of all indecomposable summands in 
$\mathcal{L}_{\lambda}^{i}$ belong to $-i$. This means that
$\mathcal{L}_{\lambda}^{\bullet}$ is linear and completes the proof.
\end{proof}

\begin{corollary}\label{ncor6}
The algebra $A$ is Koszul.
\end{corollary}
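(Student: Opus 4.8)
The plan is to derive Koszulity of $A$ from the linearity of the tilting complex $\mathcal{L}_{\lambda}^{\bullet}$ representing each simple module $L(\lambda)$, which was just established in Proposition~\ref{nprop5}. The key idea is that a positively graded algebra is Koszul precisely when its simple modules are represented by linear complexes in a category that can be used to compute $\mathrm{ext}$, and here the relevant category is the homotopy category of complexes of tilting modules, which (by the results of Section~\ref{s31}, in particular Theorem~\ref{thm33} applied in both directions, together with the fact that $\mathcal{F}^b(\Delta)\subseteq\mathcal{P}(A)$) is equivalent to the bounded-above derived category of $A$.

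First I would recall that since $A$ is adapted (Lemma~\ref{nlem2}), every indecomposable projective $P(\mu)$ has a finite linear-up-to-shift tilting coresolution by Corollary~\ref{cor301}, so $P(\mu)$ is represented in $\mathcal{D}^-(A)$ by a bounded complex of tilting modules whose $i$-th term has all centroids in degree $-i$ (this uses that the coresolution of $\Delta(\mu)$ is linear and that $P(\mu)$ has a $\Delta$-filtration of the right form). Thus both $L(\lambda)$ and $P(\mu)$ have \emph{linear} tilting complexes, say $\mathcal{L}_{\lambda}^{\bullet}$ and $\mathcal{T}_{\mu}^{\bullet}$. Then for $k\in\mathbb{Z}$ and $j\in\mathbb{Z}$ I would compute
\begin{displaymath}
\mathrm{ext}^k_A(L(\lambda),L(\mu)\langle j\rangle)\cong
\mathrm{Hom}_{\mathcal{D}^-(A)}(\mathcal{T}_{?}^{\bullet},\mathcal{L}_{\mu}^{\bullet}[k]\langle j\rangle)
\end{displaymath}
via a projective (equivalently, tilting) resolution of $L(\lambda)$; more directly, one uses the tilting complex $\mathcal{L}_{\lambda}^{\bullet}$ itself together with Corollary~\ref{ncor6}'s predecessors, namely Corollary~\ref{ncor4} on domination and Corollary~\ref{ncor3}, to see that any morphism $\mathcal{L}_{\lambda}^{\bullet}\to\mathcal{L}_{\mu}^{\bullet}[k]\langle j\rangle$ in the homotopy category is forced to vanish unless $k$ and $j$ match the linearity bookkeeping. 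Concretely: since $\mathcal{L}_{\lambda}^{i}$ has centroids in degree $-i$ and $(\mathcal{L}_{\mu}^{\bullet}[k]\langle j\rangle)^{i}=\mathcal{L}_{\mu}^{i+k}\langle j\rangle$ has centroids in degree $-(i+k)-j$, the two complexes are comparable under the domination relation whenever $k+j\neq 0$, and Corollary~\ref{ncor4} then kills the $\mathrm{Hom}$-space. Hence $\mathrm{ext}^k_A(L(\lambda),L(\mu)\langle j\rangle)=0$ unless $j=-k$, which is exactly the statement that $A$ is Koszul.

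The step I expect to be the main obstacle is the careful translation between $\mathrm{Ext}$-groups computed via projective resolutions and $\mathrm{Hom}$-groups in the homotopy category of tilting complexes — one must be sure that the equivalence $\mathcal{P}(A)\simeq\mathcal{D}^-(A^{\downarrow}\text{-}\mathrm{gmod})$ (on the relevant subcategories) genuinely lets one replace the projective resolution of $L(\lambda)$ by its minimal tilting complex $\mathcal{L}_{\lambda}^{\bullet}$ without losing or gaining extensions, and that the grading shifts are tracked correctly so that the centroid bookkeeping in the definition of ``linear'' and ``dominates'' lines up with the internal degree $j$ in $\mathrm{ext}^k_A(L(\lambda),L(\mu)\langle j\rangle)$. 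Once that dictionary is in place, the vanishing is a formal consequence of Corollary~\ref{ncor4}. Since all of this is parallel to the quasi-hereditary case, I would simply write: \emph{Mutatis mutandis} \cite[Corollary~6]{Ma5}, after noting that Proposition~\ref{nprop5}, Corollary~\ref{ncor3} and Corollary~\ref{ncor4} supply the exact analogues of the ingredients used there.
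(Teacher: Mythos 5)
Your approach is the same as the paper's: the paper also proves this corollary ``mutatis mutandis \cite[Corollary~6]{Ma5}'', using Proposition~\ref{nprop5} and Corollary~\ref{ncor4} exactly as you indicate. However, your sketch contains a directionality error in the key step. Corollary~\ref{ncor4} is one-sided: if $\mathcal{X}^{\bullet}$ \emph{dominates} $\mathcal{Y}^{\bullet}$ then $\mathrm{Hom}_{\mathcal{D}^-(A)}(\mathcal{X}^{\bullet},\mathcal{Y}^{\bullet})=0$, and it says nothing about $\mathrm{Hom}_{\mathcal{D}^-(A)}(\mathcal{Y}^{\bullet},\mathcal{X}^{\bullet})$. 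With $\mathcal{X}^{\bullet}=\mathcal{L}_{\lambda}^{\bullet}$ (centroids at $-i$ in position $i$) and $\mathcal{Y}^{\bullet}=\mathcal{L}_{\mu}^{\bullet}\langle j\rangle[k]$ (centroids at $-(i+k+j)$ in position $i$), domination of $\mathcal{X}^{\bullet}$ over $\mathcal{Y}^{\bullet}$ amounts to $k+j<0$. So Corollary~\ref{ncor4} only kills $\mathrm{ext}^k_A(L(\lambda),L(\mu)\langle j\rangle)$ in the range $j<-k$; for $j>-k$ it is the \emph{wrong} Hom-space that domination forces to vanish. Your phrase ``the two complexes are comparable under the domination relation whenever $k+j\neq 0$, and Corollary~\ref{ncor4} then kills the Hom-space'' is therefore not correct as stated.

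The missing half is elementary but does need to be said: since $A$ is positively graded and $L(\lambda)$ is concentrated in degree zero, the $k$-th term of the minimal projective resolution of $L(\lambda)$ is generated in degrees $\geq k$, whence $\mathrm{ext}^k_A(L(\lambda),L(\mu)\langle j\rangle)=0$ whenever $j>-k$. This is exactly the two-sided squeeze the paper uses explicitly in the proof of Corollary~\ref{ncor7} (``As $A$ is positively graded we obviously have $j\leq -i$\,\ldots\,from Corollary~\ref{ncor4} it follows that $j\geq -i$''), and the same squeeze is what makes Corollary~\ref{ncor6} work. Once you add this, the argument is correct. One further small inaccuracy: you assert that $P(\mu)$ has a \emph{linear} tilting complex; Corollary~\ref{cor301} only yields a finite tilting coresolution, and linearity of that coresolution is neither automatic nor needed here, since the argument proceeds directly via $\mathcal{L}_{\lambda}^{\bullet}$.
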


\begin{proof}
Mutatis mutandis \cite[Corollary~6]{Ma5}.
\end{proof}

\begin{corollary}\label{ncor7}
We have the following:
\begin{enumerate}[(i)]
\item\label{ncor7-1} Standard $A$-modules have linear projective
resolutions.
\item\label{ncor7-2} Proper costandard $A$-modules have linear 
injective coresolutions.
\end{enumerate}
\end{corollary}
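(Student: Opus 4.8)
The plan is to reduce the statement to a vanishing of off-diagonal graded extension groups and then to establish that vanishing by representing the relevant modules by linear complexes of tilting modules. First I would record the standard reformulation of linearity in the positively graded setting. Since $A$ is positively graded, the $i$-th term of the minimal projective resolution of the degree-zero generated module $\Delta(\lambda)$ is generated in degrees $\geq i$ (each syzygy lies in the radical of the previous projective cover), so all its indecomposable summands have centroid $\geq i$; hence this resolution is linear if and only if none of those summands has centroid $>i$, which by the usual bookkeeping of minimal resolutions means that $\mathrm{ext}^{i}_{A}(\Delta(\lambda),L(\mu)\langle j\rangle)=0$ for all $\mu\in\Lambda$, all $i\geq 0$ and all $j$ with $i+j<0$. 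Dually, using that $\overline{\nabla}(\mu)$ has simple socle $L(\mu)$ in degree zero and that all proper costandard modules are finite dimensional (Lemma~\ref{lem4}), the $n$-th term of the minimal injective coresolution of $\overline{\nabla}(\mu)$ is cogenerated in degrees $\leq -n$, so that coresolution is linear if and only if $\mathrm{ext}^{n}_{A}(L(\nu)\langle j\rangle,\overline{\nabla}(\mu))=0$ for all $\nu\in\Lambda$, all $n\geq 0$ and all $j>n$.

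Next I would establish these vanishings with the help of Proposition~\ref{nprop5} and Corollary~\ref{ncor4}. For \eqref{ncor7-1}: since $A$ is balanced, $\Delta(\lambda)$ is isomorphic in $\mathcal{D}^{-}(A)$ to its linear tilting coresolution $\mathcal{S}_{\lambda}^{\bullet}$, which is a finite --- hence right bounded --- complex of tilting modules by Corollary~\ref{cor15}\eqref{cor15-1}, while by Proposition~\ref{nprop5} the simple module $L(\mu)$ is isomorphic in $\mathcal{D}^{-}(A)$ to a right bounded linear complex $\mathcal{L}_{\mu}^{\bullet}$ of tilting modules. Therefore
\begin{displaymath}
\mathrm{ext}^{i}_{A}(\Delta(\lambda),L(\mu)\langle j\rangle)\cong
\mathrm{Hom}_{\mathcal{D}^{-}(A)}\bigl(\mathcal{S}_{\lambda}^{\bullet},\,
\mathcal{L}_{\mu}^{\bullet}\langle j\rangle[i]\bigr),
\end{displaymath}
and in cohomological degree $p$ the indecomposable summands of $\mathcal{S}_{\lambda}^{p}$ have centroid $-p$, while those of the $p$-th term of $\mathcal{L}_{\mu}^{\bullet}\langle j\rangle[i]$ have centroid $-p-(i+j)$; when $i+j<0$ the latter is strictly larger than $-p$ for every $p$, so $\mathcal{S}_{\lambda}^{\bullet}$ dominates $\mathcal{L}_{\mu}^{\bullet}\langle j\rangle[i]$ and the right hand side vanishes by Corollary~\ref{ncor4}. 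For \eqref{ncor7-2} the argument is symmetric: $\overline{\nabla}(\mu)$ is isomorphic in $\mathcal{D}^{-}(A)$ to its linear tilting resolution $\mathcal{C}_{\mu}^{\bullet}$ (a right bounded, though possibly infinite, complex of tilting modules, available since $A$ is balanced; cf. Corollary~\ref{cor15}\eqref{cor15-2}), and $L(\nu)\cong\mathcal{L}_{\nu}^{\bullet}$ as above, whence $\mathrm{ext}^{n}_{A}(L(\nu)\langle j\rangle,\overline{\nabla}(\mu))\cong\mathrm{Hom}_{\mathcal{D}^{-}(A)}(\mathcal{L}_{\nu}^{\bullet}\langle j\rangle,\mathcal{C}_{\mu}^{\bullet}[n])$; comparing centroids in each cohomological degree shows that for $j>n$ the complex $\mathcal{L}_{\nu}^{\bullet}\langle j\rangle$ dominates $\mathcal{C}_{\mu}^{\bullet}[n]$, so this group vanishes again by Corollary~\ref{ncor4}.

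The point I expect to need the most care is making the application of Corollary~\ref{ncor4} rigorous: one has to check that the complexes in play genuinely represent the corresponding objects inside one and the same right bounded homotopy category of tilting complexes, and keep accurate track of the grading shifts $\langle j\rangle$ and the homological shifts $[i]$ so that one complex really dominates the other off the diagonal. This relies on the finiteness of tilting coresolutions of standard modules (Corollary~\ref{cor15}\eqref{cor15-1}), on the right boundedness of the complexes supplied by Proposition~\ref{nprop5}, and on the fact that tilting resolutions of the finite dimensional proper costandard modules, though possibly infinite, are concentrated in non-positive cohomological degrees; granting these, the verification is routine and parallels \cite[Corollary~7]{Ma5}. (Alternatively, \eqref{ncor7-2} could be deduced from \eqref{ncor7-1} by applying the graded duality, after noting that the dual algebra is again balanced.)
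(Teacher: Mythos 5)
Your proof is correct and takes essentially the same route as the paper: translate linearity of the minimal (co)resolution into vanishing of off-diagonal graded extension groups, represent each such ext group as a hom in $\mathcal{D}^{-}(A)$ between the linear tilting complexes $\mathcal{S}_{\lambda}^{\bullet}$ (resp.\ $\mathcal{L}_{\nu}^{\bullet}$) and $\mathcal{L}_{\mu}^{\bullet}$ (resp.\ $\mathcal{C}_{\mu}^{\bullet}$), and kill it via the domination vanishing of Corollary~\ref{ncor4}. The paper phrases this as deducing $j\leq -i$ from positivity and $j\geq -i$ from non-domination, so $j=-i$, but the content is identical to your off-diagonal vanishing argument.
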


\begin{proof}
Assume that $\mathrm{ext}^i_A(\Delta(\lambda),L(\mu)\langle j\rangle)\neq 0$
for some $\lambda,\mu\in\Lambda$, $i\geq 0$ and $j\in\mathbb{Z}$. 
As $A$ is positively graded we obviously have $j\leq -i$.
On the other hand, this inequality yields an existence of a 
non-zero homomorphism (in 
$\mathcal{D}^{-}(A^{\downarrow}\text{-}\mathrm{mod})$) from 
$\mathcal{S}_{\lambda}^{\bullet}$ to 
$\mathcal{L}_{\lambda}^{\bullet}[i]\langle j\rangle$. But both
$\mathcal{S}_{\lambda}^{\bullet}$ and $\mathcal{L}_{\lambda}^{\bullet}$
are linear (Proposition~\ref{nprop5})
and hence from Corollary~\ref{ncor4} it follows that $j\geq -i$.
Therefore $j=-i$ and statement \eqref{ncor7-1} follows. The statement
\eqref{ncor7-2} is proved similarly.
\end{proof}

\begin{corollary}\label{ncor8}
We have the following:
\begin{enumerate}[(i)]
\item \label{ncor8-1} Standard $R(A)$-modules have finite linear projective
resolutions. 
\item \label{ncor8-2} Standard $R(A)$-modules have finite linear tilting
coresolutions. 
\item \label{ncor8-3} Proper costandard $R(A)$-modules have linear 
tilting resolutions. 
\item \label{ncor8-4} Proper costandard $R(A)$-modules have linear 
injective coresolutions. 
\end{enumerate}
\end{corollary}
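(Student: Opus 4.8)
The plan is to transport the linearity statements from $A$ to $R(A)$ using the weak Ringel duality functor $\mathrm{G}$ of Theorem~\ref{thm32} together with the linearity results already established for $A$ in Corollary~\ref{ncor7}. First I would invoke Theorem~\ref{thm32}\eqref{thm32-1}, which tells us that $R(A)$ is again an adapted standardly stratified algebra; in particular all of Sections~\ref{s3} and \ref{s31} apply to $R(A)$, and a priori $R(A)$ has standard modules, proper costandard modules, tilting modules, etc. Moreover, by Theorem~\ref{thm32}\eqref{thm32-2} we have $R(R(A))\cong A$, so $\mathrm{G}$ is an involutive duality: statements for $R(A)$ obtained from statements for $A$ are genuinely symmetric, which is what lets us deduce \eqref{ncor8-3} and \eqref{ncor8-4} as the ``dual'' of \eqref{ncor8-1} and \eqref{ncor8-2}.

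For \eqref{ncor8-2}: by Theorem~\ref{thm32}\eqref{thm32-4}, $\mathrm{G}$ sends projective $A$-modules to tilting $R(A)$-modules and standard $A$-modules to standard $R(A)$-modules, and it is exact on $\mathcal{F}^b(\Delta^{(A)})$. Apply $\mathrm{G}$ to the linear projective resolution of a standard $A$-module $\Delta(\lambda)$ furnished by Corollary~\ref{ncor7}\eqref{ncor7-1} (which is finite since $A$ is balanced, so that $\Delta(\lambda)\in\mathcal{F}^b(\Delta^{(A)})\subseteq\mathcal{P}(A)$ and its minimal projective resolution is perfect). Since $\mathrm{G}$ is contravariant, the result is a (finite) tilting coresolution of the standard $R(A)$-module $\mathrm{G}\Delta(\lambda)$. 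The key point is that $\mathrm{G}$ interacts with the grading so that it takes a linear complex to a linear complex: a complex is linear exactly when the centroids of the indecomposable summands in homological degree $i$ lie in $-i$, and $\mathrm{G}=\mathcal{R}\mathrm{hom}_A(-,\mathfrak{T})$ shifts degree and homological position in matching fashion (this is the mechanism already used implicitly in \cite{Ma5} and in Theorem~\ref{thm33}). So \eqref{ncor8-2} follows; and \eqref{ncor8-1} follows by taking the (finite, linear) projective resolution of each tilting summand appearing — equivalently, by applying Corollary~\ref{ncor7}\eqref{ncor7-1} directly to $R(A)$ once one knows $R(A)$ is balanced, or by composing the linear tilting coresolution with linear projective resolutions of tilting modules, which are linear by the analogue of Proposition~\ref{nprop5} for $R(A)$.

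For \eqref{ncor8-3} and \eqref{ncor8-4}: here I would use the strong Ringel duality functor $\tilde{\mathrm{F}}$ (or $\mathrm{F}$) of Theorem~\ref{thm33}, which sends proper costandard $A$-modules to proper costandard $R(A)$-modules and tilting $A$-modules to injective $R(A)$-modules. Applying $\tilde{\mathrm{F}}$ to the linear tilting resolution $\mathcal{C}_\lambda^{\bullet}$ of $\overline{\nabla}(\lambda)$ (which exists by hypothesis~\eqref{thm1-c3} on the balanced algebra $A$) yields a linear injective coresolution of the proper costandard $R(A)$-module $\tilde{\mathrm{F}}\,\overline{\nabla}(\lambda)$, giving \eqref{ncor8-4}; and applying Corollary~\ref{ncor7}\eqref{ncor7-2} to $R(A)$ — once we know $R(A)$ is balanced, which requires hypotheses \eqref{thm1-c2} and \eqref{thm1-c3} for $R(A)$ — gives \eqref{ncor8-3}, or alternatively one dualizes \eqref{ncor8-2} via $R(R(A))\cong A$. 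The main obstacle I anticipate is the bookkeeping of centroids under $\mathrm{G}$ and $\tilde{\mathrm{F}}$: one must check carefully that these functors are ``graded'' in the sense that they convert the linearity condition (centroid in $-i$ at homological degree $i$) on one side into the same condition on the other, rather than merely preserving boundedness; this is precisely the kind of degree-tracking argument that \cite{Ma5} carries out in its Section~3, and here it should go through \emph{mutatis mutandis} using Corollary~\ref{ncor4} to pin down both inequalities $j\geq -i$ and $j\leq -i$.
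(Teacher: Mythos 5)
Your treatment of \eqref{ncor8-2} and \eqref{ncor8-4} matches the paper's: apply $\mathrm{G}$ to the linear projective resolution of $\Delta^{(A)}$ from Corollary~\ref{ncor7}\eqref{ncor7-1}, and apply $\tilde{\mathrm{F}}$ to the linear tilting resolution $\mathcal{C}_\lambda^{\bullet}$ of $\overline{\nabla}^{(A)}$ from hypothesis~\eqref{thm1-c3}.

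However, your handling of \eqref{ncor8-1} and \eqref{ncor8-3} has a genuine gap. For both you suggest invoking Corollary~\ref{ncor7} for $R(A)$ ``once one knows $R(A)$ is balanced,'' or composing with linear resolutions of tilting $R(A)$-modules via the $R(A)$-analogue of Proposition~\ref{nprop5} --- but establishing that $R(A)$ is balanced (or that tilting $R(A)$-modules have linear projective resolutions) is exactly what requires \eqref{ncor8-2} and \eqref{ncor8-3} in the first place, so this is circular. Your alternative of ``dualizing \eqref{ncor8-2} via $R(R(A))\cong A$'' is too vague to close the loop; $R(R(A))\cong A$ gives you information about $A$ from $R(A)$, not the other way around. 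The paper's argument for \eqref{ncor8-1} and \eqref{ncor8-3} is direct and avoids circularity by running the \emph{same} functors the other way on the \emph{other} already-known linear complex over $A$: $\mathrm{G}$ sends the finite linear tilting coresolution of $\Delta^{(A)}$ (given by hypothesis~\eqref{thm1-c2}) to a finite linear projective resolution of $\Delta^{(R(A))}$, which is \eqref{ncor8-1}; and $\mathrm{F}$ (Theorem~\ref{thm33}\eqref{thm33-4}) sends the linear injective coresolution of $\overline{\nabla}^{(A)}$ (Corollary~\ref{ncor7}\eqref{ncor7-2}) to a linear tilting resolution of $\overline{\nabla}^{(R(A))}$, which is \eqref{ncor8-3}. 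In short, each duality functor is used in both directions on data already available over $A$; no property of $R(A)$ beyond Theorems~\ref{thm32} and~\ref{thm33} is needed.
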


\begin{proof}
Using Theorem~\ref{thm32}\eqref{thm32-4} we see that the functor  
$\mathrm{G}$ maps a finite linear projective resolution of  $\Delta^{(A)}$ 
(Corollary~\ref{ncor7}\eqref{ncor7-1}) to a finite linear tilting coresolution 
of $\Delta^{(R(A))}$. It also maps a finite linear tilting coresolution of 
$\Delta^{(A)}$ to a finite linear projective resolution of 
$\Delta^{(R(A))}$.

Using Theorem~\ref{thm33}\eqref{thm33-4} we see that the functor  
$\mathrm{F}$ maps a 
linear injective coresolution of  $\overline{\nabla}^{(A)}$ 
(Corollary~\ref{ncor7}\eqref{ncor7-2}) to a linear tilting resolution 
of $\overline{\nabla}^{(R(A))}$. 
Using Theorem~\ref{thm33}\eqref{thm33-5} we see that the functor  
$\tilde{\mathrm{F}}$ maps  a linear tilting resolution of 
$\overline{\nabla}^{(A)}$ to a linear injective coresolution of 
$\overline{\nabla}^{(R(A))}$. The claim follows.
\end{proof}

\begin{corollary}\label{ncor805}
The algebra $R(A)$ is Koszul.
\end{corollary}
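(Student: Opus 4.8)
The plan is to derive Koszulity of $R(A)$ from the general criterion already established for the class of balanced algebras: namely that an adapted standardly stratified algebra all of whose standard modules have linear projective resolutions and all of whose proper costandard modules have linear injective coresolutions is Koszul (this is exactly how Corollary~\ref{ncor6} was obtained from Proposition~\ref{nprop5} via the argument of \cite[Corollary~6]{Ma5}). So the first step is to check that $R(A)$ falls into the same setting: by Theorem~\ref{thm32}\eqref{thm32-1} the algebra $R(A)$ is an adapted standardly stratified algebra, and by Corollary~\ref{ncor8}\eqref{ncor8-1} and \eqref{ncor8-4} its standard modules have finite linear projective resolutions and its proper costandard modules have linear injective coresolutions.

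Next I would verify that $R(A)$ actually satisfies the hypotheses \eqref{thm1-c1}--\eqref{thm1-c3} of Theorem~\ref{thm1}, i.e.\ that it is itself balanced, so that the whole machinery of Section~\ref{s4} (in particular Proposition~\ref{nprop5} and Corollary~\ref{ncor6}) applies verbatim with $A$ replaced by $R(A)$. Condition \eqref{thm1-c1} holds because projective $R(A)$-modules have finite standard filtrations (this was noted in the proof of Theorem~\ref{thm32}\eqref{thm32-1}, where $R(A)$ was shown to be weakly adapted). Condition \eqref{thm1-c2} is Corollary~\ref{ncor8}\eqref{ncor8-2}, and condition \eqref{thm1-c3} is Corollary~\ref{ncor8}\eqref{ncor8-3}. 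Hence $R(A)$ is balanced.

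Having checked that $R(A)$ is balanced, the conclusion is immediate: Corollary~\ref{ncor6} applies to $R(A)$ in place of $A$, giving that $R(A)$ is Koszul. Concretely, one repeats the argument of Proposition~\ref{nprop5} and Corollary~\ref{ncor7} for $R(A)$ to see that simple $R(A)$-modules are represented by linear tilting complexes and that standard $R(A)$-modules have linear projective resolutions, and then the Koszulity criterion of \cite[Corollary~6]{Ma5} finishes the proof.

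I do not expect any genuine obstacle here: the statement is a formal consequence of the Ringel duality results of Section~\ref{s31} together with the linearity transport established in Corollary~\ref{ncor8}. The only point requiring a little care is making sure that \emph{every} hypothesis of Theorem~\ref{thm1} is genuinely verified for $R(A)$ rather than merely one of them — in particular that the finiteness of standard filtrations of projectives (condition \eqref{thm1-c1}) is in place, since without it the tilting theory of Sections~\ref{s3}--\ref{s31} does not run. Once that bookkeeping is done, the proof is a one-line appeal to Corollary~\ref{ncor6}.
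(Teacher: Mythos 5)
Your proposal is correct and follows exactly the paper's route: the paper's one-line proof ``follows from Corollaries~\ref{ncor6} and \ref{ncor8}'' is precisely the observation that $R(A)$ is balanced (via Theorem~\ref{thm32}\eqref{thm32-1} for condition~\eqref{thm1-c1} and Corollary~\ref{ncor8}\eqref{ncor8-2},\eqref{ncor8-3} for conditions~\eqref{thm1-c2},\eqref{thm1-c3}), so that the Section~\ref{s4} machinery, in particular Proposition~\ref{nprop5} and Corollary~\ref{ncor6}, applies verbatim to $R(A)$. One small inaccuracy in your opening paragraph: Corollary~\ref{ncor6} is not obtained from a criterion about linear projective resolutions of standards and linear injective coresolutions of proper costandards (those appear only afterwards in Corollary~\ref{ncor7}); rather it flows from the representation of simples by linear tilting complexes in Proposition~\ref{nprop5} -- but your second paragraph corrects course and carries the argument through properly.
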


\begin{proof}
This follows from Corollaries~\ref{ncor6} and Corollaries~\ref{ncor8}.
\end{proof}

Denote by $\mathfrak{LT}$ the full subcategory of 
$\mathcal{D}^{-}(A)$, which consists of all linear complexes of
tilting $A$-modules. The category $\mathfrak{LT}$ is equivalent to 
$\mathrm{gmod}\text{-}E(R(A))^{\uparrow}$ and simple objects of 
$\mathfrak{LT}$ have the form $T(\lambda)\langle -i\rangle[i]$, where
$\lambda\in\Lambda$ and $i\in\mathbb{Z}$ (see \cite{MOS}).

\begin{proposition}\label{nprop9}
We have the following:
\begin{enumerate}[(i)]
\item\label{nprop9-1} 
The objects $\mathcal{S}_\lambda^{\bullet}$, where $\lambda\in\Lambda$,
are proper standard objects in $\mathfrak{LT}$ with respect to $\preceq$.
\item\label{nprop9-2} 
The objects $\mathcal{C}_\lambda^{\bullet}$, where $\lambda\in\Lambda$,
are costandard objects in $\mathfrak{LT}$ with respect to $\preceq$.
\end{enumerate}
\end{proposition}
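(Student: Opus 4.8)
The plan is to argue as in \cite[Section~3]{Ma5}; the feature absent from the quasi-hereditary setting of \cite{Ma5} is that one must keep track of whether a given linear complex of tilting modules represents a \emph{standard} or a \emph{proper standard} object of $\mathfrak{LT}$ (respectively a \emph{costandard} or a \emph{proper costandard} one), and verify that $\mathcal{S}_\lambda^{\bullet}$, built from $\Delta(\lambda)$, gives the \emph{proper} standard object, while $\mathcal{C}_\lambda^{\bullet}$, built from $\overline{\nabla}(\lambda)$, gives the \emph{full} costandard object.

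The first step is to describe $\mathcal{S}_\lambda^{\bullet}$ as an object of $\mathfrak{LT}\simeq\mathrm{gmod}\text{-}E(R(A))^{\uparrow}$. By Corollary~\ref{cor15}\eqref{cor15-1} it is a bounded complex of length at most $|\Lambda|-1$; its term in homological degree $0$ is $T(\lambda)$ by Theorem~\ref{thm8}\eqref{thm8-2}; and, by induction with respect to $\preceq$ — using that the cokernel of $\Delta(\lambda)\hookrightarrow T(\lambda)$ has a finite standard filtration with subquotients $\Delta(\mu)\langle j\rangle$ for $\mu\prec\lambda$ (Theorem~\ref{thm8}\eqref{thm8-2}, Lemma~\ref{lem10}), and that a linear tilting coresolution of a standard module $\Delta(\mu)$ involves only tilting summands $T(\nu)$ with $\nu\preceq\mu$ — one sees that for every $i>0$ each indecomposable summand of $\mathcal{S}_\lambda^{i}$ is of the form $T(\mu)\langle-i\rangle$ with $\mu\prec\lambda$. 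Since $\mathcal{S}_\lambda^{\bullet}$ is a minimal complex, it is indecomposable in $\mathfrak{LT}$, it has simple top $T(\lambda)$ (the simple object of $\mathfrak{LT}$ concentrated in homological degree $0$), and all of its remaining composition factors are of the form $T(\mu)\langle-i\rangle[i]$ with $\mu\prec\lambda$ and $i>0$ — exactly the composition-factor pattern of the proper standard object at $\lambda$ relative to $\preceq$. Dually, $\mathcal{C}_\lambda^{0}=T(\lambda)$, the complex $\mathcal{C}_\lambda^{\bullet}$ has simple socle $T(\lambda)$ in $\mathfrak{LT}$, and for $i<0$ every summand of $\mathcal{C}_\lambda^{i}$ is $T(\mu)\langle-i\rangle$ with $\mu\prec\lambda$; note that $\mathcal{C}_\lambda^{\bullet}$ may be unbounded (Corollary~\ref{cor15}\eqref{cor15-2}).

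To pass from ``correct composition factors'' to ``\emph{is} the (proper) standard / costandard object'' I would transport the statement to $R(A)$. The contravariant functor $\mathrm{G}$ of Theorem~\ref{thm32} is exact on $\mathcal{F}^{b}(\Delta^{(A)})$ (Lemma~\ref{lem5}), sends tilting $A$-modules to projective $R(A)$-modules and $\Delta^{(A)}(\lambda)$ to $\Delta^{(R(A))}(\lambda)$ (Theorem~\ref{thm32}\eqref{thm32-4}), and preserves linearity; hence it carries $\mathcal{S}_\lambda^{\bullet}$ to the finite minimal linear projective resolution of $\Delta^{(R(A))}(\lambda)$, which exists by Corollary~\ref{ncor8}\eqref{ncor8-1} and Corollary~\ref{ncor805}. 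The identification then amounts to the compatibility of Koszul duality with standard stratification for the Koszul standardly stratified algebra $R(A)$, in the spirit of \cite{ADL2,ADL3}: applying minimal linear projective resolutions to $0\to K^{(R(A))}(\lambda)\to P^{(R(A))}(\lambda)\to\Delta^{(R(A))}(\lambda)\to0$, in which $K^{(R(A))}(\lambda)$ is finitely filtered by standard $R(A)$-modules $\Delta^{(R(A))}(\mu)\langle j\rangle$ with $\mu\prec\lambda$ in $\preceq$, yields, after passing through the equivalence $\mathrm{gmod}\text{-}E(R(A))^{\uparrow}\simeq\mathfrak{LT}$, a short exact sequence that exhibits $\mathcal{S}_\lambda^{\bullet}$ as the quotient of the corresponding indecomposable projective object of $\mathfrak{LT}$ modulo the submodule generated by the images of all morphisms into it from indecomposable projective objects indexed by $\mu$ with $\lambda\preceq\mu$. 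That is the definition of the proper standard object, which proves \eqref{nprop9-1}. Part \eqref{nprop9-2} is obtained by the analogous argument, transporting along the functors $\mathrm{F}$ and $\tilde{\mathrm{F}}$ of Theorem~\ref{thm33} and using the corresponding linear-resolution statements for $R(A)$ (Corollary~\ref{ncor7}\eqref{ncor7-2}, Corollary~\ref{ncor8}\eqref{ncor8-3}, \eqref{ncor8-4}), or equivalently by dualizing the argument above with Theorem~\ref{thm2new} in place of Theorem~\ref{thm2}\eqref{thm2-1}; the relevant filtration is now that of an \emph{injective} $R(A)$-module, which is exactly why one lands in the full costandard family rather than in the proper costandard one.

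The main obstacle is the last identification: confirming that the kernel of the surjection from the indecomposable projective object of $\mathfrak{LT}$ onto $\mathcal{S}_\lambda^{\bullet}$ is \emph{precisely} the submodule generated by the ``higher'' projectives — equivalently, getting the dictionary between $\Delta,\overline{\Delta},\nabla,\overline{\nabla}$ on the two sides exactly right, including the proper/non-proper swap that has no counterpart in \cite{Ma5}. Concretely this reduces to Hom-computations inside $\mathfrak{LT}$ governed by the self-orthogonality of $\Delta$ against $\overline{\nabla}$ (Lemma~\ref{lem5}) and by the vanishing in Corollary~\ref{ncor4}, together with careful bookkeeping of centroids; it is the step that genuinely departs from \cite{Ma5} and should be verified in detail rather than quoted ``mutatis mutandis''.
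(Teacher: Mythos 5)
Your route is genuinely different from what the paper intends: you transport everything to $R(A)$ via the Ringel duality functor $\mathrm{G}$ and then try to invoke a compatibility of Koszul duality with the stratification over $R(A)$, whereas the paper's proof is ``mutatis mutandis'' [Ma5, Proposition~11], i.e.\ a direct computation inside $\mathfrak{LT}$ built on Corollaries~\ref{ncor3} and \ref{ncor4} and the explicit shape of the linear (co)resolutions. The composition-factor bookkeeping at the start of your argument (that $\mathcal{S}_\lambda^{i}$ for $i>0$ only involves $T(\mu)\langle i\rangle$ with $\mu\prec\lambda$, via Lemma~\ref{lem10} and induction along $\preceq$) is correct and is indeed the first step one needs.

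The gap is in the identification step, and it is a real gap, not merely an omitted verification. You propose to apply ``minimal linear projective resolutions'' to the short exact sequence $0\to K^{(R(A))}(\lambda)\to P^{(R(A))}(\lambda)\to\Delta^{(R(A))}(\lambda)\to 0$ and read off $\mathcal{S}_\lambda^{\bullet}$ as a quotient of the indecomposable projective object of $\mathfrak{LT}$. But the minimal linear projective resolution of $P^{(R(A))}(\lambda)$ is $P^{(R(A))}(\lambda)$ concentrated in a single degree, which under $\mathrm{G}$ corresponds to $T(\lambda)[0]$ — by the paper's explicit description, this is the \emph{simple} object of $\mathfrak{LT}$ at $\lambda$, not the indecomposable projective. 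So the short exact sequence you describe cannot realize $\mathcal{S}_\lambda^{\bullet}$ as the required quotient of a projective object by the submodule generated by ``higher'' projectives. Moreover, taking minimal resolutions of the three terms of a short exact sequence of $R(A)$-modules does not, in any naive termwise sense, produce a short exact sequence in $\mathfrak{LT}$: the obvious inclusion of the degree-$0$ term into a minimal resolution, and the obvious projection onto its truncation, fail to be chain maps precisely when the first differential is nonzero, so an additional argument (not supplied) is needed even to obtain a short exact sequence in $\mathfrak{LT}$. You do flag this as the step requiring detailed verification, which is an honest self-assessment, but as written it is exactly the content of the proposition and remains unproved; part \eqref{nprop9-2} is then addressed only by a one-line appeal to duality and inherits the same gap.
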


\begin{proof}
Mutatis mutandis \cite[Proposition~11]{Ma5}.
\end{proof}

\begin{proposition}\label{nprop10}
For all $\lambda,\mu\in\Lambda$ and $i,j\in\mathbb{Z}$ we have
\begin{equation}\label{eq20202}
\mathrm{Hom}_{\mathcal{D}^b(\mathfrak{LT})}
(\mathcal{S}_\lambda^{\bullet},\mathcal{C}_\mu
\langle j\rangle[-i]^{\bullet})=
\begin{cases}
\Bbbk,& \lambda=\mu, i=j=0;\\
0,& \text{otherwise}.
\end{cases}
\end{equation}
\end{proposition}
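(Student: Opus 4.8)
The plan is to transport the computation into the abelian category $\mathfrak{LT}$ and there use the standard Ext-vanishing between proper standard and costandard objects, exactly as in the quasi-hereditary case treated in \cite{Ma5}. First I would recall that $\mathfrak{LT}$ is equivalent to $\mathrm{gmod}\text{-}E(R(A))^{\uparrow}$, so $\mathcal{D}^b(\mathfrak{LT})$ computes $\mathrm{Ext}$-groups in this module category, and that under this equivalence the internal shift $\langle 1\rangle[-1]$ of complexes becomes the grading shift $\langle 1\rangle$ on $E(R(A))$-modules. By Proposition~\ref{nprop9} the objects $\mathcal{S}_\lambda^{\bullet}$ are the proper standard objects of $\mathfrak{LT}$ (with respect to $\preceq$) and the $\mathcal{C}_\mu^{\bullet}$ are the costandard objects. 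Thus \eqref{eq20202} is, after translation, precisely the assertion that
\begin{displaymath}
\mathrm{ext}^{n}_{E(R(A))}\bigl(\overline{\Delta}(\lambda),\nabla(\mu)\langle j\rangle\bigr)=
\begin{cases}\Bbbk,& \lambda=\mu,\ n=j=0;\\ 0,&\text{otherwise},\end{cases}
\end{displaymath}
where $n=-i$ records the homological degree in $\mathcal{D}^b(\mathfrak{LT})$ and $j$ the internal grading. This is the classical orthogonality of proper standard and costandard modules over a standardly stratified (here even graded) algebra, which holds as soon as $E(R(A))$ is known to be standardly stratified.

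The key inputs are therefore: (1) that $\mathfrak{LT}\simeq\mathrm{gmod}\text{-}E(R(A))^{\uparrow}$ and that $E(R(A))$ is a graded standardly stratified algebra with the $\mathcal{S}_\lambda^{\bullet}$ and $\mathcal{C}_\mu^{\bullet}$ as its proper standard and costandard objects — this is the content of Proposition~\ref{nprop9} together with the general Koszul-dual description of $\mathfrak{LT}$ from \cite{MOS} and the fact (Corollary~\ref{ncor805}, via Corollary~\ref{ncor8}) that $R(A)$ is Koszul, so $E(R(A))$ is well-defined; and (2) Lemma~\ref{lem5}, which over any standardly stratified algebra gives $\mathrm{ext}^i(\overline{\Delta}(\lambda),\nabla(\mu)\langle j\rangle)$ — or equivalently, by graded duality, $\mathrm{ext}^i(\Delta(\lambda),\overline{\nabla}(\mu)\langle j\rangle)$ — vanishing except for the one-dimensional contribution when $i=j=0$, $\lambda=\mu$. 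Applying Lemma~\ref{lem5} to the algebra $E(R(A))$ (which is legitimate once we know it is graded standardly stratified) immediately yields \eqref{eq20202}. Alternatively, and this is the route I would actually write out to stay self-contained, I would mimic \cite[Proposition~12 or 13]{Ma5}: compute the Hom-space directly from the linearity of both complexes. Since $\mathcal{S}_\lambda^{\bullet}$ is a linear tilting coresolution of $\Delta(\lambda)$ and $\mathcal{C}_\mu^{\bullet}$ a linear tilting resolution of $\overline{\nabla}(\mu)$, the graded shift by $[-i]\langle j\rangle$ moves $\mathcal{C}_\mu^{\bullet}$ off the ``linear'' diagonal unless $i=j=0$; Corollary~\ref{ncor4} (the domination vanishing) kills the Hom-space in all degrees where one complex strictly dominates the other, leaving only $i=j=0$, and there a direct identification with $\mathrm{hom}_{\mathcal{D}^-(A)}(\Delta(\lambda),\overline{\nabla}(\mu))$ via Lemma~\ref{lem5} gives $\Bbbk$ exactly when $\lambda=\mu$.

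Concretely the steps, in order, are: (i) observe $\mathrm{Hom}_{\mathcal{D}^b(\mathfrak{LT})}(\mathcal{S}_\lambda^{\bullet},\mathcal{C}_\mu^{\bullet}\langle j\rangle[-i])$ equals $\mathrm{Hom}_{\mathcal{D}^-(A)}(\mathcal{S}_\lambda^{\bullet},\mathcal{C}_\mu^{\bullet}\langle j\rangle[-i])$, since $\mathfrak{LT}$ is a full subcategory of $\mathcal{D}^-(A)$ and both objects lie in it; (ii) note $\mathcal{S}_\lambda^{\bullet}$ is quasi-isomorphic to $\Delta(\lambda)$ and $\mathcal{C}_\mu^{\bullet}$ to $\overline{\nabla}(\mu)$, so the Hom-space computes $\mathrm{ext}^{?}_A(\Delta(\lambda),\overline{\nabla}(\mu)\langle j\rangle)$ in an appropriate degree, and by Lemma~\ref{lem5} this forces $\lambda=\mu$ and isolates $i=j=0$ as the only candidate for nonvanishing; (iii) for the case $i=j=0$ and $\lambda=\mu$, use linearity of both complexes together with Corollary~\ref{ncor4} to see that no ``degree zero'' Hom-contributions can come from the higher terms, so the Hom-space is exactly $\mathrm{hom}_A(\Delta(\lambda),\overline{\nabla}(\lambda))=\Bbbk$. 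The main obstacle is step (iii): one must be careful that although $\mathcal{S}_\lambda^{\bullet}$ and $\mathcal{C}_\lambda^{\bullet}$ are both linear and both represent modules with one-dimensional $\mathrm{hom}$ between them, the computation of the Hom-space in $\mathcal{D}^b(\mathfrak{LT})$ (not in $\mathcal{D}^-(A)$) requires knowing that chain maps between the linear tilting complexes, modulo homotopy, match exactly the single map $\Delta(\lambda)\to\overline{\nabla}(\lambda)$ — this is where the argument genuinely uses that $\mathcal{S}_\lambda^{\bullet}$ is \emph{proper} standard (rather than standard) in $\mathfrak{LT}$, and is handled precisely as in \cite[Section~3]{Ma5}, whence the ``mutatis mutandis'' style of the surrounding lemmas.
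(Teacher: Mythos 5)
Your proposal sketches two routes, and unfortunately each has a gap that prevents it from standing on its own.

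The first route — identify $\mathrm{Hom}_{\mathcal{D}^b(\mathfrak{LT})}$ with $\mathrm{ext}_{E(R(A))}(\overline{\Delta},\nabla\langle j\rangle)$ and invoke the analogue of Lemma~\ref{lem5} over $E(R(A))$ — is circular in the context of this paper. To apply Lemma~\ref{lem5} (or its dual) to $E(R(A))$ you need $E(R(A))$ to be a graded standardly stratified algebra, and that is exactly Corollary~\ref{ncor11}, which comes \emph{after} Proposition~\ref{nprop10} and whose proof explicitly uses Propositions~\ref{nprop9} and \ref{nprop10}. Proposition~\ref{nprop9} only identifies $\mathcal{S}_\lambda^{\bullet}$ and $\mathcal{C}_\mu^{\bullet}$ as proper standard and costandard objects of $\mathfrak{LT}$; it does not provide the standard filtration of projectives in $\mathfrak{LT}$, and so does not make $E(R(A))$ standardly stratified at this stage of the argument. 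You cite ``Proposition~\ref{nprop9} together with the general Koszul-dual description from \cite{MOS}'' as giving this, but the paper's own logic does not grant it yet, and no specific result of \cite{MOS} yielding the standard stratification of $E(R(A))$ for this class of algebras is pointed to.

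The second route has a genuine gap at step (i): the claim that $\mathrm{Hom}_{\mathcal{D}^b(\mathfrak{LT})}(\mathcal{S}_\lambda^{\bullet},\mathcal{C}_\mu^{\bullet}\langle j\rangle[-i])$ equals $\mathrm{Hom}_{\mathcal{D}^-(A)}(\mathcal{S}_\lambda^{\bullet},\mathcal{C}_\mu^{\bullet}\langle j\rangle[-i])$ ``since $\mathfrak{LT}$ is a full subcategory of $\mathcal{D}^-(A)$'' is false as stated. Full faithfulness of the inclusion of the abelian category $\mathfrak{LT}$ into $\mathcal{D}^-(A)$ controls degree $0$ and degree $1$ morphisms, but it does \emph{not} imply that the realization functor $\mathcal{D}^b(\mathfrak{LT})\to\mathcal{D}^-(A)$ is fully faithful; the higher Ext groups of the subcategory and of the ambient triangulated category need not agree. (In this Koszul setting such a comparison is essentially a Koszul-duality-type theorem and is precisely what needs work — it is not a formal consequence of fullness.) There is also a mismatch of shift conventions that the equality glosses over: $[-i]$ in $\mathcal{D}^b(\mathfrak{LT})$ and $\langle j\rangle$ as the internal grading of $\mathfrak{LT}\simeq\mathrm{gmod}\text{-}E(R(A))^{\uparrow}$ do \emph{not} coincide with $[-i]$ and $\langle j\rangle$ on the underlying complexes of $A$-modules (the internal grading shift of $\mathfrak{LT}$ is $\langle j\rangle[-j]$ as a complex of $A$-modules, since only that combination preserves linearity). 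Your closing paragraph does flag the distinction ``in $\mathcal{D}^b(\mathfrak{LT})$ (not in $\mathcal{D}^-(A)$),'' but this directly contradicts step (i) rather than resolving it, and the needed input — a careful computation of Ext in $\mathfrak{LT}$ itself, built on domination (Corollary~\ref{ncor4}) and the structure of $\mathfrak{LT}$ via its realization as linear complexes of projective $R(A)$-modules, as in \cite[Proposition~12]{Ma5} — is left unsupplied. Lemma~\ref{lem5} over $A$ is indeed a correct ingredient, but by itself it computes morphisms in $\mathcal{D}^-(A)$, not Ext in $\mathfrak{LT}$, which is what the proposition is about.
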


\begin{proof}
Mutatis mutandis \cite[Proposition~12]{Ma5}.
\end{proof}

\begin{corollary}\label{ncor11}
The algebra $E(R(A))$ is standardly stratified with respect to $\preceq$.
\end{corollary}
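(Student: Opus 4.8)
The plan is to recognize Corollary~\ref{ncor11} as the standard consequence of having identified, inside the triangulated category $\mathfrak{LT}\simeq\mathrm{gmod}\text{-}E(R(A))^{\uparrow}$, a full family of proper standard objects (the $\mathcal{S}^\bullet_\lambda$, by Proposition~\ref{nprop9}\eqref{nprop9-1}) and a full family of costandard objects (the $\mathcal{C}^\bullet_\mu$, by Proposition~\ref{nprop9}\eqref{nprop9-2}) whose $\mathrm{Hom}$-pairing in $\mathcal{D}^b(\mathfrak{LT})$ is the ``orthonormal'' one of Proposition~\ref{nprop10}. Indeed, standard stratification of a positively graded algebra $B=E(R(A))$ is equivalent, via the general theory (see \cite{CPS,Fr,ADL2} and the characterizations in Theorem~\ref{thm2}/Theorem~\ref{thm2new} applied to $B$), to the statement that each indecomposable projective $B$-module has a finite filtration whose subquotients are standard modules; and the vanishing $\mathrm{ext}^i_B(\Delta(\lambda),\overline{\nabla}(\mu)\langle j\rangle)=0$ for $i>0$ together with $\mathrm{hom}=\Bbbk\cdot\delta$ is exactly the hom-orthogonality recorded in Proposition~\ref{nprop10} once one translates $\mathrm{ext}^i_B$ into $\mathrm{Hom}_{\mathcal{D}^b(\mathfrak{LT})}$ with the appropriate shift and grading twist.

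First I would make explicit the dictionary: under the equivalence $\mathfrak{LT}\simeq\mathrm{gmod}\text{-}E(R(A))^{\uparrow}$ of \cite{MOS}, the simple objects $T(\lambda)\langle -i\rangle[i]$ correspond to the simple $E(R(A))$-modules, so a linear complex of tilting modules with its internal grading becomes a graded $E(R(A))$-module, and $\mathrm{Hom}_{\mathcal{D}^b(\mathfrak{LT})}(X,Y\langle j\rangle[-i])$ becomes $\mathrm{ext}^i_{E(R(A))}(X,Y\langle j\rangle)$. Under this dictionary Proposition~\ref{nprop9} says $\mathcal{S}^\bullet_\lambda$ is the proper standard module $\overline{\Delta}^{(E(R(A)))}(\lambda)$ and $\mathcal{C}^\bullet_\lambda$ is the costandard module $\nabla^{(E(R(A)))}(\lambda)$, while Proposition~\ref{nprop10} becomes precisely the identity $\mathrm{ext}^i_{E(R(A))}(\overline{\Delta}(\lambda),\nabla(\mu)\langle j\rangle)=\Bbbk$ if $\lambda=\mu,i=j=0$ and $0$ otherwise.

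Next I would invoke the Brauer--Humphreys--type reciprocity / characterization of standardly stratified algebras: a positively graded locally finite algebra $B$ with simples indexed by $\Lambda$ and preorder $\preceq$ is standardly stratified provided that the indecomposable projectives $P^{(B)}(\lambda)$ admit finite filtrations by standard modules, and this in turn follows once one knows that $\mathrm{ext}^1_B(P(\lambda),\overline{\nabla}(\mu)\langle j\rangle)=0$ is automatic (projectivity) together with the fact that the classes of the $\Delta^{(B)}(\lambda)\langle j\rangle$ and the $\overline{\nabla}^{(B)}(\mu)\langle j\rangle$ form dual bases of the (graded) Grothendieck group under the Euler form. Concretely: from Proposition~\ref{nprop10} the Euler pairing $\langle[\overline{\Delta}(\lambda)\langle j\rangle],[\nabla(\mu)\langle j'\rangle]\rangle=\sum_i(-1)^i\dim\mathrm{ext}^i$ equals $\delta_{\lambda\mu}\delta_{jj'}$, so the $[\overline{\Delta}(\lambda)\langle j\rangle]$ are linearly independent and $\preceq$-triangular with respect to the classes of simples; expressing $[P^{(B)}(\lambda)]$ in this basis and using the vanishing of higher $\mathrm{ext}$ into costandards (which holds because $P(\lambda)$ is projective, interpreted through the equivalence) forces $[P^{(B)}(\lambda)]$ to be a nonnegative $\preceq$-triangular combination of the $[\Delta^{(B)}(\mu)\langle j\rangle]$; a now-standard filtration argument (as in Theorem~\ref{thm2} for $A$, or \cite[Theorem~1]{Fr}) upgrades this numerical statement to an actual finite standard filtration of $P^{(B)}(\lambda)$, which is exactly the definition of standardly stratified. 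The finiteness of the filtration is guaranteed because each $P^{(B)}(\lambda)$, as a linear complex of tilting $A$-modules representing a projective, is a bounded complex with finitely many indecomposable summands (it lives in $\mathcal{P}(A)$ by Corollary~\ref{cor301}), so only finitely many standard layers can occur.

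The main obstacle I anticipate is the last upgrade step: passing from the Grothendieck-group / $\mathrm{Hom}$-orthogonality data to an honest finite standard filtration of the projectives of $E(R(A))$, since $E(R(A))$ is infinite dimensional and one must control the filtration layer-by-layer using the positive grading exactly as was done for $A$ itself in Section~\ref{s3}. I would handle it by transporting the argument of Theorem~\ref{thm2} and Lemma~\ref{lem7} verbatim to $B=E(R(A))$ — this is legitimate because $B$ is a positively graded locally finite algebra (as $R(A)$ is positively graded and Koszul by Corollary~\ref{ncor805}, its Yoneda algebra is again positively graded locally finite), and the proper costandard modules of $B$, namely the $\mathcal{S}^\bullet_\lambda$, are finite dimensional in the relevant sense (each is a bounded linear complex of tilting modules with finitely many indecomposable summands in each degree, by the construction in the proof of Proposition~\ref{nprop5} together with Corollary~\ref{ncor7}\eqref{ncor7-1}). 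Once the hypotheses of the characterization theorem are checked for $B$, the conclusion that $B$ is standardly stratified with respect to $\preceq$ is immediate.
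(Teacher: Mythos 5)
Your overall strategy is the right one — extract the hom-orthogonality and vanishing from Propositions~\ref{nprop9} and \ref{nprop10}, rerun a Theorem~\ref{thm2}-type filtration argument inside $\mathfrak{LT}\simeq\mathrm{gmod}\text{-}E(R(A))^{\uparrow}$ (after dualizing, which you elide but which is the reason the paper writes ``applying the duality to Propositions~\ref{nprop9} and \ref{nprop10}'': the proper standard objects of $\mathfrak{LT}$ are proper costandard \emph{left} $E(R(A))$-modules, not proper standard ones as you state). The Grothendieck-group Euler-form paragraph is harmless motivation but carries no weight; the filtration argument is what does the work, and you correctly flag that this is the route. Up to this point your proposal matches the paper.

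The gap is in your finiteness argument. You claim each indecomposable projective $P^{(E(R(A)))}(\lambda)$ ``lives in $\mathcal{P}(A)$ by Corollary~\ref{cor301}'' and is therefore a bounded complex with finitely many indecomposable summands, from which finiteness of the standard filtration would follow. This is a non sequitur: Corollary~\ref{cor301} is a statement about projective $A$-modules, not about projective $E(R(A))$-modules, and it gives no information about the latter. Worse, the claim is false in general: $E(R(A))$ is infinite dimensional whenever $R(A)$ has infinite global dimension (the typical case for standardly stratified, non-quasi-hereditary $A$), so its indecomposable projectives are infinite dimensional and correspond under the $\mathfrak{LT}$-equivalence to \emph{unbounded} linear complexes, not perfect ones. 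What the paper does instead is reduce finiteness of the standard filtration of $P^{(E(R(A)))}(\lambda)$ to finiteness of $\dim\mathrm{Hom}_{\mathfrak{LT}}(\mathcal{S}_\lambda^{\bullet},\,\text{injective})$, then re-realizes $\mathfrak{LT}$ as linear complexes of \emph{projective $R(A)$-modules}: injective objects become linear projective resolutions of simple $R(A)$-modules and the $\mathcal{S}_\lambda^{\bullet}$ become linear projective resolutions of standard $R(A)$-modules, so the hom space is controlled by $\bigoplus_i\mathrm{ext}^i_{R(A)}(\Delta^{(R(A))}(\lambda),L^{(R(A))}(\mu))$, which is finite precisely because standard $R(A)$-modules have \emph{finite} linear projective resolutions (Corollary~\ref{ncor8}\eqref{ncor8-1}). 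That last input — the finite length of the linear resolutions over $R(A)$ — is the real source of finiteness, and it is absent from your argument. You do observe that the $\mathcal{S}_\lambda^{\bullet}$ are themselves bounded with finitely many summands per degree (true, by Corollary~\ref{cor15}\eqref{cor15-1}), which could be turned into a correct finiteness argument on the proper-costandard side, but you don't actually use that; your stated reason rests on the false boundedness of the projectives.
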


\begin{proof}
Applying the duality to Propositions~\ref{nprop9} and \ref{nprop10} we
obtain  that standard $E(R(A))$-modules are left orthogonal to proper 
costandard. Using this and the same arguments as in the proof of 
Theorem~\ref{thm2} one shows that projective $E(R(A))$-modules have a 
standard filtration.

Since standard $E(R(A))$-modules are left orthogonal to proper costandard
modules, to prove that the standard filtration of an indecomposable projective
$E(R(A))$-module is finite it is enough to show that the dimension of
the full ungraded homomorphism space from any indecomposable projective
$E(R(A))$-module to any proper costandard module is finite. In terms of
the category $\mathfrak{LT}$ (which gives the dual picture), we thus
have to show that the dimension $N$ of the full ungraded homomorphism space 
from $\mathcal{S}_\lambda^{\bullet}$ to any injective object in 
$\mathfrak{LT}$ is finite. Realizing $\mathfrak{LT}$ as linear complexes
of projective $R(A)$-modules, we know that injective objects of 
$\mathfrak{LT}$ are linear projective resolutions of simple
$R(A)$-modules (see \cite[Proposition~11]{MOS}), while the proper standard 
objects are linear projective resolutions of standard $R(A)$-modules.
We  thus get that $N$ is bounded by the sum of the dimensions of all 
extension from the corresponding standard module to the corresponding
simple module. Now the claim follows from the fact that all standard 
$R(A)$-modules have finite linear resolutions 
(Corollary~\ref{ncor8}\eqref{ncor8-1}).
\end{proof}

\begin{corollary}\label{ncor12}
The complexes $\mathcal{L}_{\lambda}^{\bullet}$, where $\lambda\in\Lambda$, 
are tilting objects in $\mathfrak{LT}$.
\end{corollary}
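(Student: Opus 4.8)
\textbf{Proof proposal for Corollary~\ref{ncor12}.}

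The plan is to identify the complexes $\mathcal{L}_{\lambda}^{\bullet}$ as precisely the indecomposable tilting objects of the category $\mathfrak{LT}$, using the characterization of tilting objects from Theorem~\ref{thm8} (applied now to the standardly stratified algebra $E(R(A))$, whose module category is, up to duality, $\mathfrak{LT}$). By Corollary~\ref{ncor11} the algebra $E(R(A))$ is standardly stratified with respect to $\preceq$, so Theorem~\ref{thm8} guarantees that indecomposable tilting objects exist and are characterized as the indecomposable objects lying in $\mathcal{F}^{\downarrow}(\Delta)\cap\mathcal{F}^{\downarrow}(\overline{\nabla})$ — equivalently, in the dual picture inside $\mathfrak{LT}$, objects admitting both a proper standard filtration and a costandard filtration. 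By Proposition~\ref{nprop9}, the objects $\mathcal{S}_{\lambda}^{\bullet}$ are the proper standard objects and the $\mathcal{C}_{\lambda}^{\bullet}$ are the costandard objects of $\mathfrak{LT}$; by Proposition~\ref{nprop5} the complexes $\mathcal{L}_{\lambda}^{\bullet}$ are well-defined linear tilting complexes representing $L(\lambda)$, i.e.\ honest objects of $\mathfrak{LT}$.

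First I would show that each $\mathcal{L}_{\lambda}^{\bullet}$ has a proper standard filtration in $\mathfrak{LT}$. This is the image under the relevant duality of the statement that the indecomposable projective $A$-modules have (finite) standard filtrations — but more directly, by Proposition~\ref{nprop10} the proper standard objects $\mathcal{S}_{\mu}^{\bullet}$ and the costandard objects $\mathcal{C}_{\mu}^{\bullet}$ satisfy the Ext-orthogonality $\mathrm{Hom}_{\mathcal{D}^b(\mathfrak{LT})}(\mathcal{S}_{\mu}^{\bullet},\mathcal{C}_{\nu}^{\bullet}\langle j\rangle[-i])=\Bbbk^{\delta}$ concentrated in degree zero. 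Hence, by the analogue of Theorem~\ref{thm2} inside $\mathfrak{LT}$, an object belongs to the category of $\Delta$-filtered objects exactly when it is left-orthogonal to all (shifted) costandard objects, and belongs to the category of $\overline{\nabla}$-filtered objects exactly when it is right-orthogonal to all (shifted) proper standard objects. So I must check two Ext-vanishings for $\mathcal{L}_{\lambda}^{\bullet}$: that $\mathrm{Hom}_{\mathcal{D}^b(\mathfrak{LT})}(\mathcal{L}_{\lambda}^{\bullet},\mathcal{C}_{\mu}^{\bullet}\langle j\rangle[-i])=0$ for $i\ne 0$ (giving the proper standard filtration) and that $\mathrm{Hom}_{\mathcal{D}^b(\mathfrak{LT})}(\mathcal{S}_{\mu}^{\bullet},\mathcal{L}_{\lambda}^{\bullet}\langle j\rangle[-i])=0$ for $i\ne 0$ (giving the costandard filtration). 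Both translate, via the equivalence $\mathfrak{LT}\simeq\mathrm{gmod}\text{-}E(R(A))^{\uparrow}$ and the interpretation of the $\mathcal{S}$'s, $\mathcal{C}$'s and $\mathcal{L}$'s as resolutions of simple, standard and costandard $A$- and $R(A)$-modules, into statements about extensions in $A\text{-}\mathrm{gmod}$ which follow from Corollary~\ref{ncor7} and Corollary~\ref{ncor8}: standard modules have linear projective resolutions and proper costandard modules have linear injective coresolutions, so the relevant $\mathrm{Hom}$'s between linear complexes can only be nonzero in homological degree zero, by the domination estimate of Corollary~\ref{ncor4}.

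The main obstacle is bookkeeping the identification of the three families of objects inside $\mathfrak{LT}$ with the correct homological objects over $A$ and over $R(A)$, and keeping the degree-shift conventions (centroids, the $\langle\cdot\rangle$ and $[\cdot]$ actions, and the passage $\preceq\leftrightarrow\preceq^{\mathrm{op}}$) consistent through the chain of dualities $\mathfrak{LT}\simeq\mathrm{gmod}\text{-}E(R(A))^{\uparrow}$ and $R(A)$. Once that translation is fixed, the Ext-vanishings are exactly the linearity statements already proved, and the argument is formally parallel to the corresponding step in \cite{Ma5}. I would therefore write: \emph{Mutatis mutandis} the argument combining Theorem~\ref{thm8}, Propositions~\ref{nprop9} and \ref{nprop10}, and Corollaries~\ref{ncor7} and \ref{ncor8}, as in \cite[Corollary~13]{Ma5}; the complexes $\mathcal{L}_{\lambda}^{\bullet}$ are indecomposable, lie in the intersection of the $\Delta$-filtered and $\overline{\nabla}$-filtered subcategories of $\mathfrak{LT}$, and are therefore, by Theorem~\ref{thm8}\eqref{thm8-3}, the indecomposable tilting objects of $\mathfrak{LT}$ (up to shift).
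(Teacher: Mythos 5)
Your proposal captures the same overall structure as the paper's (one-line) proof, which simply refers to \cite[Corollary~14]{Ma5} (note: you cited Corollary~13): identify the proper standard and costandard objects of $\mathfrak{LT}$ via Proposition~\ref{nprop9}, use the orthogonality from Proposition~\ref{nprop10} and the Ext-vanishing characterization in the spirit of Theorem~\ref{thm2}, then invoke Theorem~\ref{thm8}\eqref{thm8-3}. The two Ext-vanishings you single out are indeed the correct ones to check, and the linearity inputs (Corollaries~\ref{ncor4}, \ref{ncor7}, \ref{ncor8}) are the right tools.

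However, your exposition is internally inconsistent about which filtration type each orthogonality condition detects, in a way that should be cleaned up. Since $\mathfrak{LT}\simeq\mathrm{gmod}\text{-}E(R(A))^{\uparrow}$ lives on the ``up'' side, the relevant analogue of Theorem~\ref{thm2} is the \emph{dual} version: left-orthogonality to costandard objects $\nabla$ detects a \emph{proper standard} ($\overline{\Delta}$) filtration, and right-orthogonality to proper standard objects $\overline{\Delta}$ detects a \emph{costandard} ($\nabla$) filtration. In one sentence you state the non-dual version (``$\Delta$-filtered iff left-orthogonal to costandard'' and ``$\overline{\nabla}$-filtered iff right-orthogonal to proper standard''), which contradicts Theorem~\ref{thm2} as stated in the paper (that result pairs $\Delta$ with $\overline{\nabla}$, not with $\nabla$, and pairs $\overline{\nabla}$ with $\Delta$, not with $\overline{\Delta}$); but then your parenthetical remarks immediately afterward correctly say these orthogonalities give ``the proper standard filtration'' and ``the costandard filtration.'' Your final summary (``lie in the intersection of the $\Delta$-filtered and $\overline{\nabla}$-filtered subcategories'') reverts to the wrong labels. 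In the standardly stratified (non-quasi-hereditary) setting these distinctions are not cosmetic: $\mathcal{F}(\Delta)\cap\mathcal{F}(\overline{\nabla})$ and $\mathcal{F}(\overline{\Delta})\cap\mathcal{F}(\nabla)$ are genuinely different classes, and the objects of the latter are what ``tilting objects of $\mathfrak{LT}$'' means here, given that Proposition~\ref{nprop9} supplies precisely the $\overline{\Delta}$'s and the $\nabla$'s of $\mathfrak{LT}$. So while your intended Ext-vanishings and conclusion are right, the justifying statements should consistently invoke the dualized Theorem~\ref{thm2} and the (co)tilting convention appropriate to the $\uparrow$-category.
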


\begin{proof}
Mutatis mutandis \cite[Corollary~14]{Ma5}.
\end{proof}

\begin{corollary}\label{ncor14}
There is an isomorphism $E(A)\cong R(E(R(A)))$ of graded algebras, both considered with respect to the natural grading induced from 
$\mathcal{D}^-(A)$. In particular, we have $R(E(A))\cong E(R(A))$.
\end{corollary}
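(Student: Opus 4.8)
The plan is to identify the graded algebra $E(A)$ with the Ringel dual of $E(R(A))$ by exhibiting $\mathfrak{LT}$ (the category of linear complexes of tilting $A$-modules) as the common bridge between the two constructions. Recall from the preceding discussion that $\mathfrak{LT}\simeq\mathrm{gmod}\text{-}E(R(A))^{\uparrow}$, so the algebra governing $\mathfrak{LT}$ is $E(R(A))$; on the other hand, the simple objects of $\mathfrak{LT}$ are the $T(\lambda)\langle-i\rangle[i]$, and by Proposition~\ref{nprop5} the simple $A$-modules $L(\lambda)$ are represented in $\mathcal{D}^-(A)$ by the linear tilting complexes $\mathcal{L}_\lambda^{\bullet}\in\mathfrak{LT}$. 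By Corollary~\ref{ncor12} these $\mathcal{L}_\lambda^{\bullet}$ are precisely the indecomposable tilting objects of $\mathfrak{LT}$. Therefore the Yoneda extension algebra $E(A)=\bigoplus_{\lambda}\mathrm{Ext}^{\bullet}_A(L(\lambda),L(\lambda))$ is computed as homomorphisms (with shifts) in $\mathcal{D}^-(A)$ between the $\mathcal{L}_\lambda^{\bullet}$, i.e. as the endomorphism algebra of the direct sum of all indecomposable tilting objects of $\mathfrak{LT}$ — which is exactly the definition of the Ringel dual $R(E(R(A)))$.

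Concretely, the steps I would carry out are as follows. First, record that Corollary~\ref{ncor11} guarantees $E(R(A))$ is standardly stratified, so its Ringel dual $R(E(R(A)))$ is defined and Theorem~\ref{thm32} applies. Second, invoke Corollary~\ref{ncor12}: the $\mathcal{L}_\lambda^{\bullet}$ are the indecomposable tilting objects in $\mathfrak{LT}$, so under the equivalence $\mathfrak{LT}\simeq\mathrm{gmod}\text{-}E(R(A))^{\uparrow}$ the characteristic tilting object of $E(R(A))$ corresponds to $\bigoplus_\lambda\mathcal{L}_\lambda^{\bullet}$. Third, compute: for $\lambda,\mu\in\Lambda$ and $i,j\in\mathbb{Z}$,
\begin{displaymath}
\mathrm{Hom}_{\mathcal{D}^-(A)}(\mathcal{L}_\lambda^{\bullet},\mathcal{L}_\mu^{\bullet}\langle j\rangle[i])
\cong\mathrm{ext}^i_A(L(\lambda),L(\mu)\langle j\rangle),
\end{displaymath}
which is the degree-$(i,j)$ component of $E(A)$; the same Hom-space, read inside $\mathfrak{LT}$, is the degree-$(i,j)$ component of $\mathrm{End}$ of the characteristic tilting module of $E(R(A))$, i.e. of $R(E(R(A)))$. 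Matching the multiplication (composition of morphisms in $\mathcal{D}^-(A)$ on both sides) gives the isomorphism of graded algebras $E(A)\cong R(E(R(A)))$. Fourth, the ``in particular'' clause: applying Theorem~\ref{thm32}\eqref{thm32-2} (so $R(R(-))\cong -$) together with the just-proved isomorphism applied to $R(A)$ in place of $A$ — which is legitimate since $R(A)$ is again balanced by the cumulative corollaries of this section — yields $E(R(A))\cong R(E(R(R(A))))\cong R(E(A))$.

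The main obstacle I expect is bookkeeping with the two gradings and the shift conventions: $E(A)$ is graded by \emph{extension degree}, whereas the grading on $R(E(R(A)))$ is the one induced from $\mathcal{D}^-(A)$ via the ``centroid'' formalism of Section~\ref{s4}, and one must check these match — that the homological degree $i$ in $\mathrm{ext}^i$ corresponds exactly to the complex-shift grading on $\mathfrak{LT}$ under the equivalence with $\mathrm{gmod}\text{-}E(R(A))^{\uparrow}$, with no off-by-one or sign discrepancy. This is precisely the kind of verification done ``mutatis mutandis'' from \cite{Ma5}, and the cleanest way to handle it is to note that both algebras act on the same object $\bigoplus_\lambda\mathcal{L}_\lambda^{\bullet}$ of $\mathcal{D}^-(A)$ with compatible grading data, so the identification of homogeneous components and of the multiplication is forced. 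A secondary point to be careful about is that $R(E(R(A)))$ is a priori only defined once we know $E(R(A))$ is \emph{adapted} (not merely standardly stratified); this follows because $E(R(A))$ is Koszul with finite linear resolutions of standard modules by Corollary~\ref{ncor8}, forcing finiteness of the relevant tilting filtrations and positivity of the induced grading, exactly as in the proof of Corollary~\ref{ncor11}.
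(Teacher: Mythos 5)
Your proposal is correct and takes essentially the same route as the paper: use $\mathfrak{LT}$ (which the paper defines as a full subcategory of $\mathcal{D}^-(A)$, so its Hom-spaces are genuinely derived Homs) as the bridge, identify the $\mathcal{L}_\lambda^{\bullet}$ both as representatives of the simples $L(\lambda)$ and as the indecomposable tilting objects via Corollary~\ref{ncor12}, and read off the graded endomorphism algebra of $\bigoplus_\lambda\mathcal{L}_\lambda^{\bullet}$ as $E(A)$ on one side and as $R(E(R(A)))$ on the other through the equivalence $\mathfrak{LT}\simeq\mathrm{gmod}\text{-}E(R(A))^{\uparrow}$. This is precisely the argument the paper cites as ``mutatis mutandis'' from \cite[Corollary~15]{Ma5}, and your appendix on adaptedness of $E(R(A))$ and the grading bookkeeping (via Koszulity and Corollary~\ref{ncor3}) addresses the only points that genuinely require checking.
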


\begin{proof}
Mutatis mutandis \cite[Corollary~15]{Ma5}.
\end{proof}

\begin{corollary}\label{ncor15}
Both $E(A)$ and $R(E(A))$ are positively graded with respect to
the natural grading induced from  $\mathcal{D}^-(A)$. 
\end{corollary}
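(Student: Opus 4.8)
Both $E(A)$ and $R(E(A))$ are positively graded with respect to the natural grading induced from $\mathcal{D}^-(A)$.

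The plan is to deduce the statement from Corollary~\ref{ncor14} together with the Koszulity results already obtained; the only genuinely new observation needed is that the Koszul dual of a positively graded Koszul algebra is again positively graded (in the strict sense of Section~\ref{s2}).

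First I would record this general fact. Let $B$ be a positively graded Koszul algebra and let $L$ denote the direct sum of its simple modules. By definition $E(B)$ is the Yoneda extension algebra of $L$, graded by the degree of extensions; it is therefore concentrated in non-negative degrees, and its degree-zero component is $\mathrm{hom}_B(L,L)\cong\bigoplus_{\lambda\in\Lambda}\Bbbk e_\lambda$, where we use that $\Bbbk$ is algebraically closed and that the simple modules indexed by the finite set $\Lambda$ are pairwise non-isomorphic. It is locally finite: by the theory of Koszul algebras in the locally finite setting (see \cite{MOS}), $E(B)$ is generated over $E(B)_0$ by $E(B)_1$, and $E(B)_1$, being controlled by $\mathrm{rad}(B)/\mathrm{rad}(B)^2$ and hence by $B_1$, is finite dimensional. (Note that $E(B)$ itself may well be infinite dimensional; it is only required that its graded components be finite dimensional.) Thus $E(B)$ is positively graded.

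Next I would apply this to $B=A$ and to $B=R(A)$. The algebra $A$ is Koszul by Corollary~\ref{ncor6} and positively graded by hypothesis, and $R(A)$ is Koszul by Corollary~\ref{ncor805} and positively graded by Theorem~\ref{thm32}\eqref{thm32-1}; hence $E(A)$ and $E(R(A))$ are positively graded. Corollary~\ref{ncor14} provides isomorphisms of graded algebras $R(E(A))\cong E(R(A))$ and $E(A)\cong R(E(R(A)))$, all with respect to the natural grading induced from $\mathcal{D}^-(A)$, and transporting positivity of the grading along these isomorphisms gives the claim.

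The point that deserves care — and the closest thing to an obstacle — is the identification of gradings: one must be certain that the ``natural grading induced from $\mathcal{D}^-(A)$'' occurring in Corollary~\ref{ncor14} coincides, on $E(A)$, with the Yoneda grading for which Koszulity of $A$ forces positivity, and, on $R(E(R(A)))$, with the grading coming from the internal shift $\langle\,\cdot\,\rangle$ on the linear tilting complexes $\mathcal{L}^\bullet_\lambda$ representing the simple $A$-modules. This compatibility is already built into the proof of Corollary~\ref{ncor14} and into the equivalence $\mathfrak{LT}\simeq\mathrm{gmod}\text{-}E(R(A))^\uparrow$, so no additional argument is needed; this is the same bookkeeping that underlies the corresponding step in \cite{Ma5}.
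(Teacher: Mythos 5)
Your proposal is correct, and the key steps are well chosen: reduce via Corollary~\ref{ncor14} to showing that $E(A)$ and $E(R(A))$ are positively graded, then observe that the Koszul dual of a positively graded Koszul algebra (in the locally finite sense of \cite{MOS}) is again positively graded, and finally note that for a Koszul algebra the Yoneda grading and the ``natural grading induced from $\mathcal{D}^-(A)$'' are concentrated on the diagonal and so agree. The hypotheses you need — $A$ Koszul (Corollary~\ref{ncor6}), $R(A)$ Koszul and positively graded (Corollary~\ref{ncor805}, Theorem~\ref{thm32}\eqref{thm32-1}), and the graded-algebra isomorphisms of Corollary~\ref{ncor14} — are all available at this point, so there is no circularity. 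The paper's own proof is just a pointer to \cite[Corollary~16]{Ma5}, which argues more directly inside $\mathfrak{LT}$ using the domination lemma (Corollary~\ref{ncor4}) to kill negative-degree morphisms between the linear complexes $\mathcal{L}^\bullet_\lambda$; your route instead outsources the positivity to the general fact that $E(B)$ is generated by $E(B)_1$ over $E(B)_0$ for Koszul $B$, which packages the same linearity/domination information more abstractly. Either way the argument goes through; the only point worth flagging is that your claim about local finiteness of $E(B)$ genuinely needs the ``generated in degrees $0,1$'' property from \cite{MOS} (not merely Koszulity as a condition on resolutions of simples), and you do cite it, so the proof is complete.
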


\begin{proof}
Mutatis mutandis \cite[Corollary~16]{Ma5}.
\end{proof}

\begin{lemma}\label{nlem17}
The algebra $E(R(A))$ is standard Koszul.
\end{lemma}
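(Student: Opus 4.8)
The plan is to show that \emph{standard} Koszulity of $E(R(A))$ — i.e.\ that every standard $E(R(A))$-module admits a \emph{linear} projective resolution — follows by transporting the linearity results already established for $A$ and $R(A)$ through the equivalence $\mathfrak{LT}\simeq \mathrm{gmod}\text{-}E(R(A))^{\uparrow}$. Recall from the discussion preceding Proposition~\ref{nprop9} that $\mathfrak{LT}$, the category of linear complexes of tilting $A$-modules, is equivalent to the category of graded modules over $E(R(A))$, with the simple objects corresponding to $T(\lambda)\langle -i\rangle[i]$. Under this equivalence, Proposition~\ref{nprop9}\eqref{nprop9-1} identifies $\mathcal{S}_\lambda^{\bullet}$ with the proper standard $E(R(A))$-modules (up to the duality used in Corollary~\ref{ncor11}), and Corollary~\ref{ncor12} identifies $\mathcal{L}_\lambda^{\bullet}$ with the tilting objects of $\mathfrak{LT}$, hence with the tilting $E(R(A))$-modules. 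What one really needs is the analogous statement for \emph{standard} (not just proper standard) modules together with control of the grading degrees in a projective resolution.

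First I would pin down what ``standard Koszul'' means in this context: following \cite{ADL2}, an algebra is standard Koszul if all standard modules have linear projective resolutions and all costandard modules have linear injective coresolutions (equivalently, by duality, it suffices to treat one side). So the task splits into two symmetric halves. For the standard side: the standard $E(R(A))$-modules correspond, via the equivalence with $\mathfrak{LT}$ and the duality, to certain objects of $\mathfrak{LT}$ built from the $\mathcal{S}_\lambda^\bullet$; concretely, a projective resolution of a standard $E(R(A))$-module translates into a resolution in $\mathfrak{LT}$ of the corresponding object by the indecomposable projective objects of $\mathfrak{LT}$, which (by \cite[Proposition~11]{MOS} and the realization of $\mathfrak{LT}$ as linear complexes of projective $R(A)$-modules) are exactly the linear projective resolutions of the simple $R(A)$-modules — and these are the simple objects of $\mathfrak{LT}$'s Koszul-dual description. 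The key input is that $R(A)$ is Koszul (Corollary~\ref{ncor805}) and moreover \emph{standard Koszul}: standard $R(A)$-modules have finite linear projective resolutions (Corollary~\ref{ncor8}\eqref{ncor8-1}) and proper costandard $R(A)$-modules have linear injective coresolutions (Corollary~\ref{ncor8}\eqref{ncor8-4}). Translating the linearity of these $R(A)$-resolutions through the MOS equivalence $\mathfrak{LT}\simeq\mathrm{gmod}\text{-}E(R(A))^{\uparrow}$ forces the resolutions of standard objects in $\mathfrak{LT}$ to sit in the correct single homological degree per cohomological degree — that is, linearity of standard $E(R(A))$-modules. The costandard side is handled by the same argument applied to the injective objects of $\mathfrak{LT}$ (linear tilting resolutions of proper costandard $R(A)$-modules, Corollary~\ref{ncor8}\eqref{ncor8-3}), or simply by invoking the graded duality $\circledast$ on $E(R(A))$.

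The main obstacle, I expect, is bookkeeping the interaction of the three gradings in play: the internal grading of $A$, the cohomological grading of complexes in $\mathcal{D}^-(A)$, and the resulting ``diagonal'' grading $i\mapsto i$ that governs linearity in $\mathfrak{LT}$. One must verify that the equivalence $\mathfrak{LT}\simeq\mathrm{gmod}\text{-}E(R(A))^{\uparrow}$ of \cite{MOS} matches the linearity filtration on the $\mathfrak{LT}$ side with the radical filtration on the $E(R(A))$ side, and that ``proper standard object of $\mathfrak{LT}$ represented by a linear complex'' precisely corresponds to ``proper standard $E(R(A))$-module concentrated appropriately'' — this is exactly the content already extracted in Corollary~\ref{ncor11}, so the remaining work is to upgrade from proper standard to standard, using that the kernel $K(\lambda)$ of $P(\lambda)\tto\Delta(\lambda)$ has a finite standard filtration and that, since $R(A)$ is standard Koszul and its standard resolutions are \emph{finite} (Corollary~\ref{ncor8}\eqref{ncor8-1}), the induced resolutions of standard $E(R(A))$-modules are genuinely linear rather than merely ``linear up to a shift''. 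Once the degree accounting is set up correctly — essentially a repeat of the argument in Corollary~\ref{ncor7} but now inside $\mathfrak{LT}$ via Proposition~\ref{nprop10} and Corollary~\ref{ncor4} — the conclusion is immediate. I would therefore structure the proof as: (1) recall the definition of standard Koszulity and reduce to the standard side by duality; (2) translate a minimal projective resolution of a standard $E(R(A))$-module into $\mathfrak{LT}$ via \cite{MOS}; (3) apply Proposition~\ref{nprop10} together with the linearity of $\mathcal{S}_\lambda^\bullet$ and Corollary~\ref{ncor4} (the Hom-vanishing for dominating complexes) to conclude the resolution is linear; (4) note finiteness comes from Corollary~\ref{ncor8}\eqref{ncor8-1}. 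As with the neighbouring results, much of this parallels \cite[Section~3]{Ma5} and can be carried out \emph{mutatis mutandis}.
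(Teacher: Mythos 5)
Your overall strategy — transport linearity through the equivalence $\mathfrak{LT}\simeq \mathrm{gmod}\text{-}E(R(A))^{\uparrow}$ using the orthogonality of Proposition~\ref{nprop10} and the domination argument of Corollary~\ref{ncor4}, with finiteness furnished by Corollary~\ref{ncor8}\eqref{ncor8-1} — is in the right spirit, and you identify the correct collection of ingredients. However, there are concrete errors in the bookkeeping that would derail a write-up.

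First, you have the projective and injective objects of $\mathfrak{LT}$ interchanged. You assert that the indecomposable \emph{projective} objects of $\mathfrak{LT}$, realized as linear complexes of projective $R(A)$-modules, are the linear projective resolutions of simple $R(A)$-modules; but the proof of Corollary~\ref{ncor11} states explicitly that these are the \emph{injective} objects of $\mathfrak{LT}$ (this is the content of \cite[Proposition~11]{MOS}). Second, and more seriously, you route the ``standard side'' of the argument through $\mathcal{S}_\lambda^{\bullet}$ and propose to ``upgrade from proper standard to standard'' using the finite standard filtration of $K(\lambda)$. This is a misdirection. The equivalence lands in \emph{right} $E(R(A))$-modules; under the graded duality $\circledast$, the proper standard objects $\mathcal{S}_\lambda^{\bullet}$ of $\mathfrak{LT}$ become \emph{proper costandard} left $E(R(A))$-modules, while the standard left $E(R(A))$-modules come from the \emph{costandard} objects $\mathcal{C}_\lambda^{\bullet}$ of Proposition~\ref{nprop9}\eqref{nprop9-2}. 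So the two halves of standard Koszulity translate to: $\mathcal{S}_\lambda^{\bullet}$ has a linear projective resolution in $\mathfrak{LT}$, and $\mathcal{C}_\lambda^{\bullet}$ has a linear injective coresolution in $\mathfrak{LT}$. There is no ``upgrade'' step; instead you need to treat $\mathcal{S}_\lambda^{\bullet}$ and $\mathcal{C}_\lambda^{\bullet}$ separately. Related to this, step~(1) of your plan, ``reduce to the standard side by duality,'' cannot be a one-line reduction: $\circledast$ passes between $E(R(A))$-modules and $E(R(A))^{\mathrm{op}}$-modules, and does not exchange the statement ``standard modules have linear projective resolutions'' with ``proper costandard modules have linear injective coresolutions'' for the \emph{same} algebra unless one has an extra anti-isomorphism at hand. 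Finally, the appeal to Corollary~\ref{ncor4} in step~(3) needs more care than you give it: that corollary concerns Hom-vanishing between complexes of \emph{tilting $A$-modules}, while a projective (or injective) resolution inside $\mathfrak{LT}$ is a complex of objects that are themselves linear complexes of tilting $A$-modules. One must pass to total complexes and make sure the three gradings (internal, cohomological, and the diagonal one in $\mathfrak{LT}$) are tracked correctly — precisely the argument pattern already carried out for $\mathcal{L}_\lambda^{\bullet}$ and $\mathcal{S}_\lambda^{\bullet}$ in Proposition~\ref{nprop5} and Corollary~\ref{ncor7} — before Corollary~\ref{ncor4} can be invoked. As written, the proposal leaves this gap open.
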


\begin{proof}
Mutatis mutandis \cite[Lemma~18]{Ma5}.
\end{proof}

\begin{proposition}\label{nprop16}
The positively graded algebras $E(A)$ and $R(E(A))$ are balanced.
\end{proposition}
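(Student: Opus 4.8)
The plan is to verify, for the algebra $B:=E(R(A))$, the three defining conditions of a balanced algebra (conditions \eqref{thm1-c1}, \eqref{thm1-c2} and \eqref{thm1-c3} of Theorem~\ref{thm1}), and then to transport this to $E(A)$ and $R(E(A))$ using the identification $R(E(A))\cong E(R(A))$ from Corollary~\ref{ncor14} together with the Weak Ringel Duality of Theorem~\ref{thm32}. Most of the ingredients are already assembled: Corollary~\ref{ncor11} says that $B$ is standardly stratified with respect to $\preceq$, Corollary~\ref{ncor15} guarantees that $B$ (and $R(E(A))$) are positively graded, and the category $\mathfrak{LT}$ has been shown (Propositions~\ref{nprop9}, \ref{nprop10}, Corollaries~\ref{ncor11} and \ref{ncor12}) to be the module category realizing $B$ in which $\mathcal{S}_\lambda^\bullet$, $\mathcal{C}_\lambda^\bullet$ and $\mathcal{L}_\lambda^\bullet$ play the roles of the proper standard, costandard and tilting objects dual to the standard, proper costandard and tilting $B$-modules.

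First I would dispose of condition \eqref{thm1-c1} for $B=E(R(A))$: this is exactly the content of Corollary~\ref{ncor11}, which establishes that projective $E(R(A))$-modules have \emph{finite} standard filtrations. Next, for condition \eqref{thm1-c2} I would argue that standard $B$-modules have linear tilting coresolutions. Dualizing to $\mathfrak{LT}$, a standard $B$-module corresponds to a proper standard object, i.e. to some $\mathcal{S}_\lambda^\bullet$, and a tilting $B$-module corresponds to a tilting object of $\mathfrak{LT}$, i.e. (Corollary~\ref{ncor12}) to some $\mathcal{L}_\mu^\bullet$; so I must exhibit a finite resolution of $\mathcal{S}_\lambda^\bullet$ by the $\mathcal{L}_\mu^\bullet$ inside $\mathfrak{LT}$ which is linear with respect to the natural grading induced from $\mathcal{D}^-(A)$. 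The key point is that $\mathcal{S}_\lambda^\bullet$ is itself a linear complex of tilting $A$-modules and that each $\mathcal{L}_\mu^\bullet$ represents the simple $A$-module $L(\mu)$; building the resolution amounts to resolving $\Delta(\lambda)$ by the $L(\mu)$ in a way compatible with the two gradings, and linearity follows from Corollary~\ref{ncor8}\eqref{ncor8-1}, which says the standard $R(A)$-modules have \emph{finite linear} resolutions, together with the uniqueness of minimal linear complexes. Condition \eqref{thm1-c3} for $B$ is handled dually: proper costandard $B$-modules correspond to the $\mathcal{C}_\lambda^\bullet$, and one resolves these by the tilting objects $\mathcal{L}_\mu^\bullet$ of $\mathfrak{LT}$, with linearity coming from Corollary~\ref{ncor8}\eqref{ncor8-3}; finiteness is not required in \eqref{thm1-c3}.

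Having verified that $E(R(A))$ is balanced, I would transfer the conclusion to $R(E(A))$ via the isomorphism $R(E(A))\cong E(R(A))$ of graded algebras (Corollary~\ref{ncor14}), which is immediate. For $E(A)$ I would use instead the other half of Corollary~\ref{ncor14}, namely $E(A)\cong R(E(R(A)))$: since $E(R(A))$ is a balanced, hence in particular adapted (Lemma~\ref{nlem2} applies to it), standardly stratified algebra, Theorem~\ref{thm32} tells us that its Ringel dual $R(E(R(A)))\cong E(A)$ is again adapted and standardly stratified with finite standard filtrations of the projectives, giving condition \eqref{thm1-c1} for $E(A)$; conditions \eqref{thm1-c2} and \eqref{thm1-c3} for $E(A)$ then follow by applying Corollary~\ref{ncor8} to the balanced algebra $E(R(A))$ in place of $A$ (whose Ringel dual is $E(A)$), exactly as in the proof of Corollary~\ref{ncor8}. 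I expect the main obstacle to be the bookkeeping of the \emph{two} gradings on $\mathfrak{LT}$ — the internal grading of $A$-modules and the homological grading, which together yield the grading on $E(R(A))$ — and in particular checking that the resolutions constructed inside $\mathfrak{LT}$ are linear for the correct one; this is precisely the place where finiteness of the linear resolutions of standard $R(A)$-modules (Corollary~\ref{ncor8}) is essential, as it prevents the homological degree from drifting away from the internal degree. The rest is a routine dualization of the arguments already used for $A$ and $R(A)$.
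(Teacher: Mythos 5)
The high-level plan — verify the three balanced conditions for $E(R(A))$ directly, then transfer to $R(E(A))\cong E(R(A))$ by Corollary~\ref{ncor14} and to $E(A)\cong R(E(R(A)))$ by combining Corollary~\ref{ncor14}, Theorem~\ref{thm32} and Corollary~\ref{ncor8} — is sensible and matches the shape of the argument in \cite[Proposition~17]{Ma5} that the paper invokes. The transfer to $E(A)$ and $R(E(A))$ in the last paragraph is fine. The problem lies in the middle, where you argue conditions \eqref{thm1-c2} and \eqref{thm1-c3} for $B=E(R(A))$.

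First, the dictionary you set up between $B$-modules and objects of $\mathfrak{LT}$ is inverted. By Proposition~\ref{nprop9}, the $\mathcal{S}_\lambda^\bullet$ are the \emph{proper standard} objects and the $\mathcal{C}_\lambda^\bullet$ are the \emph{costandard} objects of $\mathfrak{LT}\cong\mathrm{gmod}\text{-}E(R(A))^{\uparrow}$. Applying $\circledast$ to pass to left $E(R(A))$-modules interchanges standard with costandard and proper standard with proper costandard, so the left standard $B$-modules correspond to the $\mathcal{C}_\lambda^\bullet$ and the left proper costandard $B$-modules correspond to the $\mathcal{S}_\lambda^\bullet$ — the opposite of what you wrote. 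Even granting this is a labeling slip, the substantive claim that the required resolution can be built and is linear is not actually proved. The sentence ``building the resolution amounts to resolving $\Delta(\lambda)$ by the $L(\mu)$ in a way compatible with the two gradings'' is not a coherent mathematical statement: the tilting (co)resolution of $\mathcal{S}_\lambda^\bullet$ (or $\mathcal{C}_\lambda^\bullet$) must be constructed inside the abelian category $\mathfrak{LT}$, with genuine morphisms of linear complexes of tilting $A$-modules, and this has nothing to do with "resolving $\Delta(\lambda)$ by simples" in $\mathcal{D}^-(A)$. You also appear to identify the tilting objects $\mathcal{L}_\mu^\bullet$ of $\mathfrak{LT}$ with the linear projective resolutions of the simple $R(A)$-modules; but the proof of Corollary~\ref{ncor11} explicitly points out that under the realization of $\mathfrak{LT}$ as linear complexes of projective $R(A)$-modules, those are the \emph{injective} objects of $\mathfrak{LT}$, not the tilting ones.

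Finally, you never invoke Lemma~\ref{nlem17}, which establishes that $E(R(A))$ is standard Koszul and is placed in the paper as the last preparatory ingredient immediately before Proposition~\ref{nprop16}. That lemma (or its content) is exactly what one needs to get the linearity of the tilting (co)resolutions for $E(R(A))$; Corollary~\ref{ncor8}, which you cite instead, is a statement about $R(A)$, not about $E(R(A))$, and cannot substitute for it. So there is a genuine gap: the core step of your argument is left at the level of analogy and would need to be rebuilt using Lemma~\ref{nlem17} together with the correct identification of (proper) (co)standard objects in $\mathfrak{LT}$.
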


\begin{proof}
Mutatis mutandis \cite[Proposition~17]{Ma5}.
\end{proof}

\begin{proof}[Proof of Theorem~\ref{thm1}.]
Statement \eqref{thm1-1} follows from Corollaries~\ref{ncor6} and \ref{ncor7}.
Statement \eqref{thm1-2} follows from Corollary~\ref{ncor8} and 
Proposition~\ref{nprop16}. 
Statement \eqref{thm1-3} follows from Proposition~\ref{nprop5}. 
Finally, statement \eqref{thm1-4} follows from Corollary~\ref{ncor14}.
\end{proof}

\section{Examples}\label{s5}

\begin{example}\label{exm1}
{\rm  
Consider the path algebra $A$ of the following quiver:
\begin{displaymath}
\xymatrix{
1\ar@(ul,dl)[]_{\alpha}\ar[rrr]^{\beta}&&&2
}
\end{displaymath}
It is positively graded in the natural way (each arrow has degree one).
We have $\Delta(2)=P(2)=L(2)$, while the projective module
$P(1)$ looks as follows:
\begin{displaymath}
\xymatrix{ 
1\ar[rrd]^{\beta}\ar[d]^{\alpha}&&\\
1\ar[rrd]^{\beta}\ar[d]^{\alpha}&&2\\
1\ar[rrd]^{\beta}\ar[d]^{\alpha}&&2\\
\vdots&&\vdots
}
\end{displaymath}
In particular, we have that the ungraded composition multiplicity of 
$L(2)$  in $P(1)$ is infinite and hence $P(1)$ has an infinite 
standard filtration.  In particular, Lemma~\ref{lem9} fails in this case
and hence the universal extension procedure does not have a starting
point and can not give us a module from 
$A^{\downarrow}\text{-}\mathrm{gmod}$.
}
\end{example}

\begin{example}\label{exm2}
{\rm  
Consider the path algebra $B$ of the following quiver:
\begin{displaymath}
\xymatrix{
1\ar[rrr]^{\alpha}&&&2\ar@(ur,dr)[]^{\beta}
}
\end{displaymath}
It is positively graded in the natural way (each arrow has degree one).
We have $\Delta(1)=L(1)=T(1)$, $\Delta(2)=P(2)$ and the following
projective $B$-modules:
\begin{displaymath}
P(1):\quad\xymatrix{ 
1\ar[d]^{\alpha}\\
2\ar[d]^{\beta}\\
2\ar[d]^{\beta}\\
\vdots
}\quad\quad\quad\quad\quad P(2):\quad
\xymatrix{ 
2\ar[d]^{\beta}\\
2\ar[d]^{\beta}\\
2\ar[d]^{\beta}\\
\vdots
}
\end{displaymath}
The module $T(2)$ looks as follows:
\begin{displaymath}
T(2):\quad\xymatrix{ 
1\ar[rrd]^{\alpha}&& \\
1\ar[rrd]^{\alpha}&& 2\ar[d]^{\beta}\\
1\ar[rrd]^{\alpha}&& 2\ar[d]^{\beta}\\
\vdots && \vdots
}
\end{displaymath}
In particular, $T(2)$ has an infinite standard filtration
and hence the algebra $B$ is not weakly adapted. 
}
\end{example}

\begin{example}\label{exm3}
{\rm  
Consider the path algebra $C$ of the following quiver:
\begin{displaymath}
\xymatrix{
1\ar[rrr]^{\alpha}\ar@(ul,dl)[]_{\beta}&&&2\ar@(ur,dr)[]^{\beta}
}
\end{displaymath}
modulo the ideal, generated by the relation $\alpha\beta=\beta\alpha$.
It is positively graded in the natural way (each arrow has degree one).
We have $\overline{\nabla}(1)=L(1)$  and also the following 
projective, standard, proper costandard and tilting  $C$-modules:
\begin{displaymath}
P(1)=T(2)[-1]:\quad\xymatrix{ 
1\ar[d]^{\beta}\ar[drr]^{\alpha}&&\\
1\ar[d]^{\beta}\ar[drr]^{\alpha}&&2\ar[d]^{\beta}\\
1\ar[d]^{\beta}\ar[drr]^{\alpha}&&2\ar[d]^{\beta}\\
\vdots&&
}\quad\quad\quad\quad\quad P(2)=\Delta(2):\quad
\xymatrix{ 
2\ar[d]^{\beta}\\
2\ar[d]^{\beta}\\
2\ar[d]^{\beta}\\
\vdots
}
\end{displaymath}
\begin{displaymath}
\overline{\nabla}(2):\quad\xymatrix{ 
1\ar[rrd]^{\alpha}&&\\&&2
}\quad\quad\quad\quad\quad
T(1)=\Delta(1):\quad\xymatrix{ 
1\ar[d]^{\beta}\\
1\ar[d]^{\beta}\\
\vdots 
}
\end{displaymath}
Standard and proper costandard $C$-modules have the following 
linear tilting (co)resolutions:
\begin{gather*}
0\to \Delta(1)\to T(1)\to 0\\
0\to \Delta(2)\to T(2)\to T(1)[1]\to 0\\
0\to T(1)[-1]\to T(1)\to \overline{\nabla}(1)\to 0\\
0\to T(2)[-1]\to T(2)\to \overline{\nabla}(2)\to 0.
\end{gather*}
Hence $C$ is balanced. The indecomposable tilting objects in 
$\mathfrak{LT}$ are:
\begin{gather*}
0\to T(1)[-1]\to T(1)\to  0\\
0\to T(2)[-1]\to T(2)\oplus T(1)\to T(1)[1]\to 0.
\end{gather*}
We have $R(C)\cong C^{\mathrm{op}}$,
$E(C)$ is the path algebra of the quiver:
\begin{displaymath}
\xymatrix{
1\ar@(ul,dl)[]_{\beta}&&&2\ar@(ur,dr)[]^{\beta}\ar[lll]^{\alpha}
}
\end{displaymath}
modulo the ideal, generated by the relation $\alpha\beta=\beta\alpha$
and $\beta^2=0$, and $R(E(C))\cong E(R(C))\cong E(C)^{\mathrm{op}}$.
}
\end{example}

\begin{example}\label{exm4}
{\rm  
Every Koszul positively graded local algebra algebra $A$ with
$\dim_{\Bbbk}A_0=1$ is balanced. Every Koszul positively graded  
algebra is balanced in the case when $\prec$ is the full relation.
}
\end{example}

\begin{example}\label{exm5}
{\rm  
Directly from the definition it follows that if the algebra
$A$ is balanced, then the algebra $A/Ae_{\overline{\lambda}} A$ 
is balanced as well for any maximal $\lambda$.
It is also easy to see that if $A$ and $B$ are balanced, then
both $A\oplus B$ and $A\otimes_{\Bbbk}B$ are balanced.
}
\end{example}

\vspace{2mm}

\noindent
Department of Mathematics, Uppsala University, SE 471 06,
Uppsala, SWEDEN, e-mail: {\tt mazor\symbol{64}math.uu.se}

\end{document}